\newcommand\res{\hbox{ {\vrule height .22cm}{\leaders\hrule\hskip.2cm} } }
\def\R{{\mathbb{R}}}
\def\H{{\mathscr{H}}}
\def\sm{\setminus}
\def\wt{\widetilde}
\def\ms{\medskip}
\def\W{{\Bbb W}}
\def\CD{C_{\delta}}
\def\1{{\mathds 1}}
\def\bB{{\mathbb{B}}}
\def\bR{{\mathbb{R}}}
\def\bZ{{\mathbb{Z}}}
\def\bN{{\mathbb{N}}}
\def\cB{{\mathscr{B}}}
\def\cD{{\mathscr{D}}}
\def\cF{{\mathscr{F}}}
\def\cG{{\mathscr{G}}}
\def\cR{{\mathscr{R}}}
\def\cS{{\mathscr{S}}}
\def\cV{{\mathscr{V}}}
\renewcommand{\d}{{\partial}}
\def\Tan{\text{Tan}}
\def\BMO{\text{BMO}}
\def\Lip{\text{Lip}}
\def\diam{\mathop\mathrm{diam}}
\def\dist{\mathop\mathrm{dist}}
\newcommand{\ps}[1]{\left( #1 \right)}
\newcommand{\av}[1]{\left| #1 \right|}
\def\barintgerm_#1{\mathchoice
{\mathop{\vrule width 6pt height 3 pt depth -2.5pt
\kern -8.8pt \intop}\nolimits_{#1}}%
{\mathop{\vrule width 5pt height 3 pt depth -2.6pt
\kern -6.5pt \intop}\nolimits_{#1}}%
{\mathop{\vrule width 5pt height 3 pt depth -2.6pt
\kern -6pt \intop}\nolimits_{#1}}%
{\mathop{\vrule width 5pt height 3 pt depth -2.6pt \kern -6pt
\intop}\nolimits_{#1}}}
\theoremstyle{plain}
\newtheorem{theorem}{Theorem}
\newtheorem{corollary}[theorem]{Corollary}
\newtheorem{lemma}[theorem]{Lemma}
\newtheorem{proposition}[theorem]{Proposition}
\theoremstyle{definition}
\newtheorem{example}[theorem]{Example}
\newtheorem{definition}[theorem]{Definition}
\newtheorem{remark}[theorem]{Remark}
\newtheorem{remarks}[theorem]{Remarks}
\numberwithin{equation}{section}
\numberwithin{theorem}{section}
\begin{document}

\def\atwid{\tilde{\alpha}}
\def\wklim{\mathop\mathrm{wk--}\lim}
\def\dsh{\mathop\mathrm{--}}

\title{Wasserstein Distance and the Rectifiability of Doubling Measures: Part I}
\author{Jonas Azzam
Guy David 
and Tatiana Toro\footnote{The first author was partially supported by NSF RTG grant 0838212.
The second author acknowledges the generous support of
the Institut Universitaire de France, and of the ANR
(programme blanc GEOMETRYA, ANR-12-BS01-0014).
The third author was partially supported by an NSF grants DMS-0856687 and DMS-1361823, a grant from the  
Simons Foundation (\# 228118) and the Robert R. \& Elaine F. Phelps Professorship in 
Mathematics.}}

\maketitle

\ms\noindent{\bf Abstract.}
Let $\mu$ be a doubling measure in $\R^n$. We investigate quantitative relations
between the rectifiability of $\mu$ and its distance to flat measures.
More precisely, for $x$ in the support $\Sigma$ of $\mu$ and $r > 0$, 
we introduce a number $\alpha(x,r)\in (0,1]$ that measures,
in terms of a variant of the $L^1$-Wasserstein distance,
the minimal distance between the restriction of $\mu$ to $B(x,r)$ and 
a multiple of the Lebesgue measure on an affine subspace that meets $B(x,r/2)$.
We show that the set of points of $\Sigma$ where 
$\int_0^1 \alpha(x,r) {dr \over r} < \infty$ can be decomposed into rectifiable pieces of various dimensions. We obtain additional control on the pieces and the size of $\mu$ when we assume that 
some Carleson measure estimates hold.

\ms\noindent{\bf R\'esum\'e en Fran\c cais.}
Soit $\mu$ une mesure doublante dans $\R^n$. On \'etudie des relations quantifi\'ees
entre la rectifiabilit\'e de $\mu$ et la distance entre $\mu$ et les mesures plates. 
Plus pr\'ecis\'ement, on utilise une variante de la $L^1$-distance de Wasserstein
pour d\'efinir, pour $x$ dans le support $\Sigma$ de $\mu$ et $r>0$,
un nombre $\alpha(x,r)$ qui mesure la distance minimale entre 
la restriction de $\mu$ \`a $B(x,r)$ et une mesure de Lebesgue sur un sous-espace affine 
passant par $B(x,r/2)$. On d\'ecompose l'ensemble des points $x\in \Sigma$ tels que
$\int_0^1 \alpha(x,r) {dr \over r} < \infty$ en parties rectifiables de dimensions diverses,
et on obtient un meilleur contr\^ole de ces parties et de la taille de $\mu$ quand les 
$\alpha(x,r)$ v\'erifient certaines conditions de Carleson. 

\ms\noindent{\bf Key words/Mots cl\'es.}
Rectifiability, tangent measures, doubling measures, Wasserstein distance.

\tableofcontents

\section{Introduction}
\subsection{Statement of Results}

In this paper we are concerned with the question of rectifiability of doubling measures. More precisely we
explore quantitative conditions which imply that a doubling measure in Euclidean space is rectifiable.
Recently this question has been addressed by several authors in the context of Ahlfors regular measures \eqref{1.4}
(see \cite{CGLTolsa}, \cite{Tolsa-uniform-rectifiability} and \cite{Tolsa-mass-transport}). Roughly speaking Ahlfors regular measures 
have a prescribed polynomial growth rate while doubling measure do not. While our work can be seen as an extension of 
their results it is important to note that our proofs are very different as we lack the technology available in the 
Ahlfors regular setting. 

The question of uniform rectifiability of doubling measures can be understood as a geometric version
of whether a doubling measure supported in Euclidean space is an $A_\infty$ weight with respect to 
Lebesgue measure. The later question has been the subject of intensive research, see for example
\cite{BR}, \cite{B}, \cite{FKP}, \cite{GN1}, \cite{GN2}, \cite{NRTV} and \cite{TT}. The relationship between these two questions
is apparent in Section \ref{examples}.

In this paper we study doubling measures which are well approximated by flat measures. This notion of approximation
is expressed in terms of a minor variant of the $L^1$-Wasserstein distance between
the scaled restriction of $\mu$ to balls and the scaled restriction of $d$-dimensional Hausdorff
measure on disks of dimension $d$. In a subsequent paper (see \cite{Part2}) we
consider the extent to which the self-similarity properties
of $\mu$, still measured in terms of the $L^1$-Wasserstein distance above yield rectifiability of the measure.

In this paper, $\mu$ denotes a Radon measure on $\R^n$
(i.e., a locally finite positive Borel measure), 
and 
\begin{equation}
\Sigma_\mu = \Sigma=\left\{x\in\R^n:\, \mu(B(x,r)) > 0\hbox{  for all  }r > 0\right\}\label{1.0}
\end{equation}
denotes its support. Here and below, $B(x,r)$ denotes the open ball centered at $x$ and with radius $r$.

We say $\mu$ is \textit{doubling} when there is a constant $\CD >0$ for which
\begin{equation}\label{1.1}  
\mu (B(x,2r)) \leq \CD \mu(B(x,r)) \mbox{ for all $x\in \Sigma$ and } r>0.
\end{equation}

\begin{definition} \label{t1.1}
Let $d\in [0,n]$ be an integer.
We say that $\mu$ is \textit{$d$-rectifiable} if 
it is absolutely continuous with respect to $\H^d$
and its support $\Sigma$ may be covered, up to a set of $\mu$-measure zero, 
by countably many $d$-dimensional Lipschitz graphs.
\end{definition}

Here and below, $\H^d$ denotes the $d$-dimensional Hausdorff measure
(see \cite{Federer} or \cite{Mattila}) which we renormalize so that
\begin{equation}\label{1.2}
\H^d(\R^d \cap B(0,1)) = 1.
\end{equation}

The fact that $\mu$ is rectifiable does no imply that $\Sigma$ is rectifiable. In fact $\Sigma$ may contain a purely unrectifiable subset $Z$, with $\mu(Z) = 0$ but $\H^d(Z) > 0$.

\ms
The rectifiability of a set or measure is a coveted property from several different facets of analysis and geometric measure theory. In particular, many basic analytic properties and 
tools known to hold for smooth manifolds carry over to these sets (e.g., Rademacher's theorem, area and co-area formulas). Other properties, such as the boundedness of 
certain singular integral operators, or the absolute continuity of harmonic measure 
with respect to $\H^{n-1}$, are clearly linked to the notion of rectifiability,
but are more quantitative in nature. These led to measuring rectifiability with 
certain quantities, and in particular  the so-called P. Jones $\beta$-numbers
\begin{equation} \label{1.3}
\beta_{d,q}(x,r)=\inf_{V\in A(d,n)}\ps{\fint_{B(x,r)} 
\ps{\frac{\dist(z,V)}{r}}^{q} d\mu(z)}^{\frac{1}{q}},
\end{equation}
where $A(d,n)$ denotes the set of affine $d$-planes in $\bR^{n}$. 
The original Jones $\beta$-numbers, which were introduced 
in \cite{Jones-TSP}, correspond to $\beta_{1,\infty}$ 
(and in fact used a supremum, as there was no measure); the 
generalized form above was introduced in \cite{DS} and \cite{of-and-on}, 
where the authors studied relations between uniform rectifiability properties 
of Ahlfors $d$-regular measures, and the boundedness of singular integral
operators. Recall that $\mu$ is {\it Ahlfors $d$-regular} when there is a 
constant $C_{ar}$ such that
\begin{equation} \label{1.4}
C_{ar}^{-1} r^{d}\leq \mu(B(x,r))\leq C_{ar} r^{d}
\ \text{ for $x\in \Sigma$ and $r>0$.}
\end{equation}
In this context, the $\beta$-numbers are quite powerful, in particular 
because we already know that $\mu$ is also strongly $d$-dimensional.

In this paper, we consider doubling measures $\mu$
that are not necessarily Ahlfors regular. In this case it is more convenient
to use the following variant of the Wasserstein $L^1$ distance on measures.

\begin{definition} \label{t1.2}
Let $\mu$ and $\nu$ be measures on $\R^n$, whose restrictions to
$\bB:= B(0,1)$ are probability measures. We set
\begin{equation} \label{1.5}
\W_{1}(\mu,\nu):=\sup_{\psi}\av{\int\psi d\mu-\int\psi d\nu},
\end{equation}
where the supremum taken is over all the functions 
$\psi : \R^n \to \R$ that are $1$-Lipschitz (i.e., such that $|\psi(x)-\psi(y)| \leq |x-y|$
for $x,y \in \R^n$), and supported on $\bB$.
\end{definition}

Thus $\W_{1}(\mu,\nu)$ only measures some distance between the restrictions 
to $\bB$ of $\mu$ and $\nu$.
This quantity is similar to the usual $L^1$-Wasserstein distance, which by the Kantorovich duality theorem, has the same definition as $\W_{1}$ except that the infimum ranges over all $1$-Lipschitz nonnegative functions in $\bB$.
This creates a minor difference of control near the unit sphere, 
but using the $L^1$-Wasserstein distance instead of $\W_1$ would yield the same results. 
We could also have used a slightly smoother version of $\W_1$; 
see Section 5 of \cite{Part2}.
See \cite{Villani} for additional information about the Wasserstein distance, 
and \cite{Tolsa-mass-transport} for its relationship to $\W_{1}$ and uniform rectifiability.

The idea of using $\W_1$ for doubling measures is not new; it was used
in \cite{Tolsa-uniform-rectifiability}, in connection with singular integrals. 
See also \cite{Tolsa-mass-transport} and \cite{CGLTolsa}.

Here we mostly compare a scaled version of $\mu$ to the restriction of Hausdorff
measures to affine subspaces. For $x\in \Sigma$ and $r > 0$,
define a measure $\mu_{x,r}$ on $\R^n$ by
\begin{equation} \label{1.6}
\mu_{x,r}(A) = {\mu(x + r A)\over \mu(B(x,r))}
\ \text{ for every Borel set } A \subset \R.
\end{equation}
That is, we push forward $\mu$ by a translation and a dilation then normalize 
to make $\mu_{x,r}$ a probability measure on $\bB$.

\begin{definition} \label{t1.3}
Denote by $A(d,n)$ the set of $d$-dimensional affine planes
and by $A'(d,n)$ the set of spaces $V \in A(d,n)$ that meet $B(0,1/2)$.
For $V \in A'(d,n)$, set 
\begin{equation} \label{1.7}
\nu_V = c_V \H^d|_{V} = c_V \1_{V} \H^d,
\ \text{ with }  c_V = \H^d(V\cap \bB)^{-1}.
\end{equation}
That is, we restrict $\H^d$ to $V \cap \bB$ and normalize. 
Notice that 
\begin{equation} \label{1.8}
1 \leq c_V \leq 2^d
\end{equation}
by definition of $A'(d,n)$ and our normalization \eqref{1.2}.
Then set
\begin{equation} \label{1.9}
\alpha_d(x,r) = \inf\big\{ \W_1(\mu_{x,r},\nu_V) \, ; \, V \in A'(d,n) \big\}.
\end{equation}
If $d=0$, then $\nu_V$ is just a Dirac mass somewhere in $B(0,1/2)$.
\end{definition}

\ms
These numbers play a key role in this paper. The restriction to $V \in A'(d,n)$
is not an issue, because the best $V$ should pass near $0$ (since $0 \in \Sigma$
and $\mu$ is doubling). The bound on $c_V$ which appears in \eqref{1.8} guarantees that all
constants are under control.
The advantage of using the $\alpha_{d}(x,r)$ is that they contain more information than 
the $\beta$-numbers: while the latter measure the flatness of the support,
the former also measure the degree to which $\mu$ resembles Hausdorff measure on that plane. Hence, in determining conditions to guarantee 
that a {\it doubling} measure is rectifiable, the $\alpha_d$'s are a natural object to consider.

\begin{remark}\label{tt-r2}
Note that $\alpha_d(x,r)$ is a Borel function of $(x,r)\in\Sigma\times(0,\infty)$.
In the definition \eqref{1.5} of $\W_1(\mu,\nu)$, we can restrict ourselves 
to a countable family $\cD$ of functions $\psi^\ast$. That is, if 
$L$ is the space of $1$-Lipschitz functions $\psi$ that vanish on $\d \bB$,
it is easy to find a countable set $\cD \subset L$, which is dense
in the sup norm, and then \eqref{1.5} stays the same if we restrict 
the supremum to $\psi \in \cD$. Similarly, the infimum in the definition
\eqref{1.9} of $\alpha_d(x,r)$ can be taken in a countable dense
class $\cV$ of $A'(d,n)$. Then 
\begin{equation} \label{7.16}
\alpha_d(x,r) = \inf_{V \in \cV} \sup_{\psi \in \cD} \delta_{x,r}(V,\psi),
\end{equation}
where
\begin{eqnarray} \label{7.17}
\delta_{x,r}(V,\psi) &=& \av{\int_{\bB}\psi d\mu_{x,r}-\int_{\bB}\psi d\nu_V}
\\
&=& \av{\mu(B(x,r))^{-1}\int_{B(x,r)}\psi(x+ry) d\mu(y)-\int_{\bB}\psi d\nu_V}.
\nonumber
\end{eqnarray}
Notice that for each $\psi$, $\int_{B(x,r)}\psi(x+ry) d\mu(y)$ is a continuous
function of $(x,r) \in \Sigma$, while $\mu(B(x,r))$ is a Borel function that does not vanish. 
Thus $\alpha_d(x,r)$ is a Borel function.
\end{remark}

In \cite{Tolsa-uniform-rectifiability}, Tolsa proves that if $\mu$ is Ahlfors $d$-regular, then $\Sigma$ is uniformly rectifiable (of dimension $d$) 
if and only if $\mu$ satisfies the following Carleson condition: 
there is a constant $C \geq 0$ such that
\begin{equation} \label{1.10}
\int_{ B(x,r)}\int_{ 0}^{r}\alpha_{d}(y,t)^{2}\, \frac{dtd\mu(y)}{t} 
\leq C \mu(B(x,r))
\end{equation}
for $x \in \Sigma$ and $r > 0$. 
Let us not define uniform rectifiability for the moment 
(see explanation above Theorem \ref{t1.8}). We would like to emphasize 
that \eqref{1.10} is a characterization (and in particular the exponent $2$
is right). This should be compared with the now more standard fact
that if $\mu$ is  Ahlfors $d$-regular, $\Sigma$ is uniformly rectifiable if and only if
\begin{equation} \label{1.11}
\int_{B(x,r)}\int_{0}^{r}\beta_{d,q}(y,t)^{2}\, \frac{dtd\mu(y)}{t} 
\leq C \mu(B(x,r))
\end{equation}
for $x \in \Sigma$ and $r > 0$, and $q \in [1, {2d \over d-2})$.
See \cite{DS}. 

Since we do not assume that $\mu$ is Ahlfors regular, or
even absolutely continuous with respect to $\H^d$ (we want to get it as a conclusion), 
the $\beta$-numbers (or their bilateral counterparts where you also make
sure that $\Sigma$ has no hole) cannot be enough. In fact there are 
doubling measures supported on $\R$ that are not absolutely continuous
with respect to Lebesgue measure (see Section \ref{examples}).

The first result does not require a priori knowledge of the dimension $d$.
Thus we use the numbers
\begin{equation} \label{1.12}
\alpha(x,r)= \min_{d=0,1,...,n} \alpha_{d}(x,r).
\end{equation}
We assume that they are often small, and get a decomposition of 
$\mu$-almost all of $\Sigma$ into rectifiable sets $\cS_{d}$ of 
various dimensions $d$.

\begin{theorem} \label{t1.5}    
Let $\mu$ be a doubling measure in $\R^n$, denote by  $\Sigma$ its support,
and set
\begin{equation} \label{1.13}
\Sigma_0 = \big\{ x\in \Sigma \, ; \, 
\int_{0}^{1}\alpha(x,r) \frac{dr}{r} < + \infty \big\}.
\end{equation} 
Then there are disjoint Borel sets
$\Sigma_0(d) \subset \Sigma$, $0 \leq d \leq n$, such that 
\begin{equation} \label{1.14}    
\Sigma_0 = \bigcup_{d=0}^n  \Sigma_0(d),
\end{equation}
with the following properties. 
\begin{enumerate}
\item
First, $\Sigma_0(0)$ is the set
of points of $\Sigma$ where $\mu$ has an atom; it is at most countable and
each of its point is an isolated point of $\Sigma$.
\item
For $1 \leq d \leq n$ and $x\in \Sigma_0(d)$, the limit
\begin{equation} \label{1.15}
\theta_d(x) := \lim_{r \to 0} r^{-d} \mu(B(x,r))
\end{equation}
exists, and $0 < \theta_d(x) < \infty$.
\item
For $1 \leq d \leq n$ and $x\in \Sigma_0(d)$, $\Sigma$ has a tangent $d$-plane
at $x$ (with the standard definition \eqref{1.18}). 
Call this plane $W$, and set $W^\ast = W-x$ (the corresponding vector space).
Then $\Tan(x,\mu)= \{c\H^{d}|_{W^\ast} \, ; \, c \geq 0\}$. In addition, the 
measures $\mu_{x,r}$ of \eqref{1.6} converge weakly to $\H^{d}|_{W^\ast}$.
\item Further decompose $\Sigma_0(d)$, $1 \leq d \leq n$,
into the sets
\begin{equation} \label{1.16}
\Sigma_0(d,k) = \big\{ x\in \Sigma_0(d) \, ; \, 
2^k \leq \theta_d(x) < 2^{k+1} \big\}, \ k\in \bZ;
\end{equation}
then each $\Sigma_0(d,k)$ is a rectifiable set of dimension $d$, with
$\H^d(\Sigma_0(d,k) \cap B(0,R)) < \infty$ for every $R > 0$, 
$\mu$ and $\H^d$ are mutually absolutely continuous on $\Sigma_0(d,k)$, 
and $\mu = \theta_d \H^d$ there.
\end{enumerate}   
\end{theorem}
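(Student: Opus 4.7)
The plan is to start by fixing a point $x \in \Sigma_0$ and extracting information scale-by-scale along the dyadic sequence $r_k = 2^{-k}$, where the summability condition gives $\sum_k \alpha(x,r_k) < \infty$. For each $k$ choose a near-optimal $d = d(k)$ and $V_k \in A'(d,n)$ realizing (almost) $\alpha_{d(k)}(x,r_k)$. The first key step is to show that $d(k)$ stabilizes at a unique $d$ for $k$ large. For this I would test suitable $1$-Lipschitz functions $\psi$ against $\mu_{x,r_k}$ and $\nu_{V_k}$: because $\mu$ is doubling, $\mu(B(x,r_k))/\mu(B(x,r_{k+1}))$ is trapped in a fixed interval, whereas for the flat models $\nu_V$ the analogous ratio scales like $2^d$. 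So if the approximating dimensions at two consecutive (small) scales differ, the Wasserstein errors must be bounded below by a positive constant, which the summability forbids eventually.

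Once the dimension $d$ is fixed for large $k$, this produces the decomposition $\Sigma_0 = \bigsqcup_{d=0}^n \Sigma_0(d)$ as Borel sets (Borel measurability follows from Remark~\ref{tt-r2}). Case $d=0$: smallness of $\alpha_0(x,r_k)$ forces $\mu_{x,r_k}$ to be close to a Dirac mass in $B(0,1/2)$, and doubling plus iteration implies $x$ is isolated in $\Sigma$ and $\mu(\{x\})>0$, giving part (1). For $d \ge 1$, I would run a Cauchy argument at two levels. First, the density: by testing a well-chosen radial Lipschitz cutoff, Wasserstein closeness of $\mu_{x,r_k}$ to $\nu_{V_k}$ controls the ratio $\mu(B(x,r_k))/\mu(B(x,r_{k+1}))$ up to an error $O(\alpha(x,r_k)+\alpha(x,r_{k+1}))$. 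Since these errors are summable, $\log\bigl(r_k^{-d}\mu(B(x,r_k))\bigr)$ is Cauchy, hence $\theta_d(x)$ exists in $(0,\infty)$, giving (2). A similar Cauchy argument on the planes: testing Lipschitz functions concentrated near candidate planes shows the angle between $V_k$ and $V_{k+1}$ is $O(\alpha(x,r_k)+\alpha(x,r_{k+1}))$, hence summable, so $V_k \to$ a limit plane $W$ (translated to pass through $x$). Passing to intermediate scales $r \in (r_{k+1}, r_k)$ is routine once the dyadic case is done.

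With $V_k \to W$ and $r_k^{-d}\mu(B(x,r_k)) \to \theta_d(x)$, the closeness in $\W_1$ upgrades to weak convergence $\mu_{x,r} \rightharpoonup \H^d|_{W^\ast}$; this and doubling-based tightness yield $\mathrm{Tan}(x,\mu) = \{c\H^d|_{W^\ast} : c \ge 0\}$, and the standard definition \eqref{1.18} of tangent plane is then straightforward, proving (3). For (4), the uniform density bounds on $\Sigma_0(d,k)$ give $\H^d(\Sigma_0(d,k) \cap B(0,R)) < \infty$ via the usual $5r$-covering / Vitali argument, and mutual absolute continuity with $\mu = \theta_d \H^d$ follows from standard differentiation of measures once one knows $\theta_d$ exists and is pinched between $2^k$ and $2^{k+1}$. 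Rectifiability of $\Sigma_0(d,k)$ then follows from Marstrand–Mattila–Preiss type criteria: at every point the unique tangent measure is flat $d$-dimensional Lebesgue, and the densities are bounded away from $0$ and $\infty$, hence the set is $d$-rectifiable.

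The main obstacle I expect is not any single ingredient but the bootstrap from the purely scalar smallness $\alpha(x,r_k) \to 0$ (and summability) to the \emph{vector-valued} convergence of the approximating planes $V_k$. The stabilization of $d$ and the estimate on the angle between $V_k$ and $V_{k+1}$ both require carefully chosen test functions that compare normalized Hausdorff measures on affine planes of possibly different dimensions or directions, while keeping track of the doubling constant to convert normalized $\W_1$-bounds into bounds on unnormalized integrals. Once this geometric stability is in hand, the density, tangent plane, weak convergence of $\mu_{x,r}$, and final rectifiability claims fall into place from now-standard measure-theoretic arguments.
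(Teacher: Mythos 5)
Your outline follows the paper's strategy almost step for step: dyadic discretization of $\int_0^1\alpha(x,r)\frac{dr}{r}$, stabilization of the approximating dimension, summable Cauchy estimates for the density and for the planes, weak convergence of $\mu_{x,r}$, and then Vitali covering plus differentiation of measures for Part 4. The two places where you deviate are harmless variants: your density-ratio mechanism for showing $d(k)$ stabilizes is essentially Lemma \ref{t2.10} applied with two different values of $d$ (the paper instead uses the two-sided geometric closeness of $\Sigma$ and $W(k)$ from Lemmas \ref{t2.6} and \ref{t2.7}), and for rectifiability you invoke the Marstrand--Mattila--Preiss criterion where the paper uses the more elementary tangent-plane-plus-locally-finite-measure covering by Lipschitz graphs; both routes are valid.

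There is, however, one genuine gap, and it sits exactly at the crux of the theorem. You assert that ``testing a well-chosen radial Lipschitz cutoff'' controls $\log\bigl(r_k^{-d}\mu(B(x,r_k))\bigr)-\log\bigl(r_{k+1}^{-d}\mu(B(x,r_{k+1}))\bigr)$ up to an error $O(\alpha(x,r_k)+\alpha(x,r_{k+1}))$, i.e.\ an error \emph{linear} in the $\alpha$'s, which is what summability requires. A single cutoff does not give this: to estimate $\mu_{x,r}(B(y,s))$ from $\W_1(\mu_{x,r},\nu_V)\le\alpha$ you must smear the indicator of $B(y,s)$ at scale $\delta$, paying $C\delta+\delta^{-1}\alpha$, and optimizing gives only $O(\sqrt{\alpha})$ --- and $\sum_k\alpha_k<\infty$ does not imply $\sum_k\sqrt{\alpha_k}<\infty$, so the Cauchy argument for $\theta_d(x)$ collapses. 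A second leak of the same kind: the center $x$ may lie off $V_d(x,r_k)$ by $C\alpha_k^\eta r_k$ (that is all Lemma \ref{t2.6} gives), which perturbs $\nu_V(B(x,s))$ by a relative $O(\alpha^\eta)$, again non-summable. The paper's way around this is the content of Lemmas \ref{t2.3}, \ref{t2.4}, \ref{t2.5} and \ref{t2.10} and Corollary \ref{t2.11}: one controls the \emph{$L^1$-average over radii} of $|F(t)-G(t)|$ linearly in $\W_1$, then selects a ``good'' radius $t$ and a ``good'' point $y$ (within $\varepsilon^{-1}\alpha r$ of the plane, which is possible because the averaged distance $\beta_{d,1}$ is linearly bounded by $\alpha$ even though the sup distance is not), chooses $(y,t)$ good simultaneously for the two consecutive scales, and works with the stabilized density $\theta_d^\ast(x,r)=r^{-d}c_d(x,r)\mu(B(x,r))$ so that the uncertainty in the position of the plane cancels. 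Without some version of this averaging-and-selection device your density step, as written, fails; the same caveat does not affect your angle estimate (there the test function vanishes on the plane, so the linear bound of Lemma \ref{t2.8} comes for free) nor the dimension-stabilization step (a one-time threshold argument tolerates $O(\sqrt{\alpha})$).
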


Here $\Tan(x,\mu)$ denotes the space of tangent measures of $\mu$ at $x$ (see Definition \ref{t1.4}).

\begin{remarks}\label{tt-r1}

1. The condition $\int_0^1\alpha(x,r) \frac{dr}{r} < \infty$ ensures that a series converges. We would like to be able to obtain information about $\mu$ and its 
support  by imposing a similar condition on the quantity $\alpha^2(x,r)$ rather than $\alpha(x,r)$. The rationale for this is addressed in the 
comments that follow Theorem \ref{t1.6}. Unfortunately at this point we are not able to do this.

\ms

2. On the set $\Sigma_0(d)$, the measures $\mu$ and $\H^d$ are mutually
absolutely continuous, in the sense that for $A \subset \Sigma_0(d)$,
\[
\begin{aligned}
\H^d(A) = 0 
&\iff \H^d(A \cap \Sigma_0(d,k)) = 0 \text{ for all } k \in \bZ
\\
&\iff \mu(A \cap \Sigma_0(d,k)) = 0 \text{ for all } k \in \bZ
\\
&\iff \mu(A) = 0.
\end{aligned}
\]

3. The density $\theta_d$ allows to compute $\mu$ from $\H^d$. In fact
let $A \subset \Sigma_0(d)$ be bounded, then
\[
\mu(A) = \sum_k \mu(A \cap \Sigma_0(d,k))
= \sum_k \int_{A \cap \Sigma_0(d,k))} \theta_d(x) d\H^d(x)= \int_{A} \theta_d(x) d\H^d(x)
\]
and similarly
\[
\H^d(A) = \sum_k \H^d(A \cap \Sigma_0(d,k))
= \sum_k \int_{A \cap \Sigma_0(d,k))} \theta_d(x)^{-1} d\mu(x)
= \int_A \theta_d(x)^{-1} d\mu(x).
\]
Note that $\H^d(\Sigma_0(d))$ is  not necessarely locally finite,
because it could happen that 
\[
\H^d(\Sigma_0(d) \cap B(0,1)) \geq {1 \over 2}
\sum_k 2^{-k} \mu(\Sigma_0(d,k)\cap B(0,1)) = \infty.
\]
See Examples \ref{t4.3}  and \ref{t4.4}. 

\ms
4. We are mostly interested in the case when
\begin{equation} \label{1.17}
\int_{0}^{1}\alpha(x,r) \frac{dr}{r} < \infty
\ \text{ for $\mu$-almost every } x\in \R^n,
\end{equation}
i.e., when $\mu(\Sigma \sm \Sigma_0) = 0$.
In this case we get a decomposition of the $\Sigma_0(d)$,
$d \geq 1$, into countably many rectifiable pieces $\Sigma_0(d,k)$,
which implies that
\[
\text{ the restriction of $\mu$ to $\Sigma_0(d)$ is $d$-rectifiable,}
\]
as in Definition \ref{t1.1}. 
We do not have much information  $\Sigma \sm \Sigma_0$.
Even when $d=1$, it may happen that $\mu$-almost every point of $\Sigma$
lies on the countably rectifiable set $\Sigma_0(1)$ of dimension $1$,
but $\Sigma$ contains a snowflake of dimension $\delta > 1$.
It could also happen that $\Sigma_0 = \Sigma_0(1)$ and $\H^1(\Sigma_0(1)) < \infty$, but 
$\Sigma$ is a snowflake of dimension $\delta > 1$
(but this second example seems less interesting because some of 
the measure $\mu$ lives on the snowflake). See Example~\ref{t4.4}. 

Our statement is not quantitative. Even if $\Sigma = \Sigma_0 = \Sigma_0(d)$
for some $d$ and $\int_{0}^{1}\alpha(x,r) \frac{dr}{r}$ is bounded on $\Sigma$,
we get no integral bound on the density $\theta_d(x)$, for instance.
This is because $\mu$ may look $d_1$-dimensional at many scales, for a different
$d_1$. See Examples \ref{t4.2} and \ref{t4.3}. 
This problem will be fixed in the next result.

\ms

5. The set $\Sigma_0(0)$ is composed of isolated points of $\Sigma$. While the atoms 
might have an accumulation point in $\Sigma$,
such point would not be an atom.
See Example \ref{t4.1}. 

\ms

6. When we say that $W$ is a tangent plane for $\Sigma$ at $x$,
we mean that
\begin{equation} \label{1.18}
\lim_{r \to 0} \, {1 \over r}\sup\big\{ \dist(y,W) \, ; \, y\in \Sigma \cap B(x,r) \big\}
= 0.
\end{equation}
This is stronger than the fact that $\Sigma_0(d)$ has a tangent plane at $x$, in fact it asserts that all of $\Sigma$ is approaching $W$ not just $\Sigma_0(d)$.
The fact that $\Tan(x,\mu)= \{c\H^{d}|_{W-x} \, ; \, c \geq 0\}$
yields additional information concerning how 
$\mu$ is distributed near $x$.

\ms

7. The constants in the proof of  Theorem \ref{t1.5} do not depend on $n$, 
but just on the dimensions $d$ and the doubling constant $\CD$. 
Thus  a modified version of Theorem \ref{t1.5} should also be valid in a Hilbert space $H$.
That is, if $\mu$ is a doubling measure on (a subset of) $H$, we can define the numbers
$\alpha_d(x,r)$ as above, and $\alpha_d(x,r)$ can only be small when
$2^d \leq 2\CD$ (otherwise, test \eqref{1.5} on a function $\psi$ which is a Lipschitz approximation of 
$\1_{B(0,1/2)}$). Then an analogue version Theorem \ref{t1.5} holds in this setting when we replace $\alpha(x,r)$
by $\alpha_d(x,r)$.

\end{remarks}

\ms
For the next statement we fix an integer dimension $d \in [1,n]$,
and give an integral version of Theorem \ref{t1.5} where
we assume that in some ball $B$ we have a good integral control on the 
$\alpha$-numbers, and then get a large piece $A$ of $\Sigma \cap B$
where we have a good control for $\mu$.
The control is good both because $A$ is bi-Lipschitz-equivalent to a subset of $\R^d$ 
and because $\mu|_A$ is equivalent in size to $\H^d|_A$.

\begin{theorem} \label{t1.6}
For each $d\in\{1,\cdots, n\}$, $\CD \geq 1$, 
$C_1 >0$, and $\gamma > 0$, we can find $L = 
L(n,d,\CD,C_1, \gamma) \geq 0$ such that 
if $\mu$ is a Radon measure satisfying the doubling property \eqref{1.1} (with the constant $\CD$), $B = B(x,r)$ is a ball centered on $\Sigma$ (the support of $\mu$),
and if
\begin{equation} \label{1.19}
\int_{B(x,2r) }\int_{0}^{2r} \alpha_{d}(y,t)\frac{dtd\mu(y)}{t} 
\leq C_1\mu (B).
\end{equation}
Then there is a Borel set $A \subset B$, such that
\begin{equation} \label{1.20}
\mu(B \sm A) \leq \gamma \mu(B),
\end{equation}
there is a $L$-bi-Lipschitz map $f: A \to f(A)\subset \R^d$,
and $\mu\res A$ and  $\H^{d}\res A$ are mutually absolutely continuous.
Moreover
\begin{equation} \label{1.21}
L^{-1} \frac{\H^d(Z)}{r^d} \leq   \frac{\mu(Z)} {\mu(B)}\leq L\frac{ \H^d(Z)}{r^d}
\ \text{ for every Borel set } Z \subset A.
\end{equation}
\end{theorem}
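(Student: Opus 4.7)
The plan is to prove this theorem in three stages: a Chebyshev-style extraction of a large set $A \subset B$ on which $\alpha_d$ is well controlled pointwise at most scales, a telescoping argument establishing that the best approximating $d$-planes at $y \in A$ stabilize as $t \to 0$ with a uniform rate, and a Reifenberg-type parametrization that converts this control into both the bi-Lipschitz map $f$ and the density comparison \eqref{1.21}.

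For the extraction stage, Fubini rewrites the hypothesis \eqref{1.19} as
\[
\int_{B(x,2r)} \Big(\int_0^{2r} \alpha_d(y,t)\,\frac{dt}{t}\Big)\,d\mu(y) \leq C_1 \mu(B),
\]
so a first Chebyshev produces $A_0 \subset B$ with $\mu(B \sm A_0) \leq \gamma \mu(B)/2$ and $\int_0^{2r} \alpha_d(y,t)\,dt/t \leq M_0$ on $A_0$, where $M_0 = 2C_1/\gamma$. A second localization then yields a further subset $A \subset A_0$ with $\mu(B \sm A) \leq \gamma \mu(B)$ enjoying the following uniform property: for a prescribed small $\eta = \eta(n,d,\CD,C_1,\gamma) > 0$, each $y \in A$ has $\alpha_d(y,t) \leq \eta$ outside a set of scales $t \in (0,r)$ of $dt/t$-measure at most $\eta$. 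For each such good pair $(y,t)$, fix a best approximating plane $V_{y,t} \in A'(d,n)$.

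The key geometric step is the quantitative stability of the $V_{y,t}$. By a triangle inequality in $\W_1$, if $t_2 \leq t_1 \leq 2 t_2$ and both $\alpha_d(y,t_1)$ and $\alpha_d(y,t_2)$ are small, then $\W_1(\nu_{V_{y,t_1}},\nu_{V_{y,t_2}}) \lesssim \alpha_d(y,t_1) + \alpha_d(y,t_2)$ after rescaling to the larger ball, and a $\W_1$-distance between two flat measures on $d$-planes through $B(0,1/2)$ controls the angle between the planes (otherwise a tailored Lipschitz test function $\psi$ witnesses the discrepancy). Telescoping along dyadic scales gives, for $y \in A$ and $0 < s \leq r$,
\[
\mathrm{angle}\bigl(V_{y,s},V_{y,r}\bigr) \lesssim \int_s^{2r} \alpha_d(y,t)\,\frac{dt}{t} + \eta \lesssim M_0,
\]
and in fact the $V_{y,s}$ converge as $s \to 0$ to a single $d$-plane $V_y$ tangent to $\Sigma$ at $y$. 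A parallel comparison between $V_{y,t}$ and $V_{y',t}$ for $y,y' \in A$ with $|y-y'| \leq t/4$ (using doubling and the definition \eqref{1.9}) shows that the family $\{V_{y,t}\}_{y \in A}$ is also uniformly close at each fixed scale $t$, so $A$ is quantitatively Reifenberg-flat of dimension $d$ at every scale.

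With this multiscale flatness in hand, I would construct $f$ as the restriction to $A$ of the orthogonal projection onto the reference plane $V_0 := V_{x,r}$: the Lipschitz bound is automatic, while the lower bi-Lipschitz bound follows from the telescoped control on plane rotations via the standard Reifenberg parametrization. The density bound \eqref{1.21} is obtained in parallel by testing the quantities $\delta_{y,t}(V_{y,t},\psi)$ from \eqref{7.17} against Lipschitz approximations of $\1_{B(0,s)}$, which yields that $\mu(B(y,st))/\mu(B(y,t))$ is close to $s^d$ uniformly for $y \in A$, $t \in (0,r)$, and $s \in (0,1/2)$; thus $t^{-d}\mu(B(y,t))$ is nearly constant in both $t$ and $y \in A$, comparable to $r^{-d}\mu(B)$. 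The main obstacle will be the second stage: converting the Carleson hypothesis into a uniform multiscale-flatness statement with a plane-rotation bound strong enough to run a Reifenberg parametrization, because the absence of Ahlfors regularity and the use of the first-power Carleson condition (rather than $\alpha_d^2$) force extra care in the telescoping estimates and in comparing flat measures at consecutive scales.
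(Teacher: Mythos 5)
Your extraction and telescoping stages contain the step that actually fails, and it is precisely the difficulty that forces the paper into a corona construction. From \eqref{1.19} and Chebyshev you correctly get $J(y)=\int_0^{2r}\alpha_d(y,t)\,dt/t\leq M_0=2C_1/\gamma$ on a set $A_0$ of large measure, but $M_0$ is \emph{not small}: $C_1$ is an arbitrary given constant in Theorem \ref{t1.6}. Your ``second localization'' producing a set $A$ on which the bad scales $\{t:\alpha_d(y,t)>\eta\}$ have $dt/t$-measure at most $\eta$ is not achievable: for a fixed $y$ with $J(y)\leq M_0$, Chebyshev in $t$ only bounds that measure by $M_0/\eta$, and since the bad scales vary with $y$ you cannot shrink them by discarding more points. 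Consequently your telescoped angle bound is only $\mathrm{angle}(V_{y,s},V_{y,r})\lesssim M_0$, which can be arbitrarily large; the set $A$ need not be Reifenberg flat with small constant, the tangent directions can rotate by much more than $\pi/2$ between scale $r$ and scale $0$, and the orthogonal projection onto a single reference plane $V_0=V_{x,r}$ has no lower bi-Lipschitz bound. What your outline proves is essentially Theorem \ref{t1.7}, where $C_1$ is assumed small; and indeed the paper's Section \ref{proof2} follows almost exactly your route (fixed reference plane, telescoping of $W_k^\ast(x)$, projection is bi-Lipschitz) under that smallness hypothesis.

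For general $C_1$ the paper instead builds Christ-type pseudo-cubes with small boundaries (Subsection \ref{cubes}), runs a stopping time so that within each region $S$ the sum $\sum\alpha(R)$ along chains is at most a small $\alpha$ (so each region carries a genuinely flat Lipschitz graph $\Gamma_S$, Subsection \ref{stop}), uses the Carleson condition to show there are only boundedly many regions in the packing sense (Proposition \ref{t6.7}), and then glues the projections $\pi_S$ of the at most $N$ nested generations of regions using contracting affine maps $h_Q$ and the small-boundary separation to produce a single bi-Lipschitz $f:A\to\R^d$ (Subsection \ref{s:big-piece}). That gluing is unavoidable: the target of $f$ is an abstract subset of $\R^d$, not the image of a projection, because the planes of different stopping regions need not be close. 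Your density argument has a related over-claim (uniform closeness of $\mu(B(y,st))/\mu(B(y,t))$ to $s^d$ requires small $\alpha$ at essentially all scales), but the weaker multiplicative version with a constant $e^{CM_0}$, obtained by telescoping the stabilized densities $\theta_d^\ast$ as in Corollary \ref{t2.11}, does suffice for \eqref{1.21} and is what the paper uses in Subsection \ref{density}.
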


\ms
There is nothing special about $2B$ in \eqref{1.19}; $2B$ could be replaced with
$\lambda B$ for any $\lambda > 1$. In this case $L$ would depend on 
$\lambda$ also. By $L$-bi-Lipschitz on $A$, we just mean that $L^{-1}|x-y| \leq |f(x)-f(y)|
\leq L |x-y|$ for $x, y \in A$.

Note that while for a $d$-Ahlfors regular measure condition \eqref{1.10} only requires 
that $\alpha_d(y,t)^2\frac{d\mu(y)\, dt}{t}$ be a Carleson measure for a general doubling measure 
condition \eqref{1.19} requires that $\alpha_d(y,t)\frac{d\mu(y)\, dt}{t}$ be a Carleson measure.
Although evidence suggests that the correct power is 2, even for doubling measures
currently we are not able to prove this. See \cite{B}, \cite{FKP}, \cite{BR}, \cite{NRTV} and Example \ref{t4.6}.
The main issue is that we do not have enough control on the density 
of $\mu$. We should mention that even when $n=d=1$, the gist of the proof of Theorem \ref{t1.6}
is to control the density as in \eqref{1.21}.

If we assume that $C_1$ in \eqref{1.19} is small enough, the set
$A$ constructed in Theorem \ref{t1.6} can be taken to be contained in a small Lipschitz graph, and on which $\mu$
is almost proportional to $\H^d$. For us, a $\gamma$-Lipschitz graph of 
dimension $d$ is a set of the form
\begin{equation} \label{1.22}
\Gamma = \big\{ x + f(x) \, ; \, x\in V \big\},
\end{equation}
where $V$ is a vector subspace of dimension $d$ of $\R^n$, and
$f : V \to V^\perp$ is $\gamma$-Lipschitz, i.e., $|f(x)-f(y)| \leq \gamma |x-y|$
for $x, y \in V$.

\ms
\begin{theorem} \label{t1.7} 
For each $d\in\{1,\cdots, n\}$, $\CD \geq 1$, 
and $\gamma \in (0,1)$, we can find $C_1 = C_1(d,\CD, \gamma) > 0$, 
such that if $\mu$ is a Radon measure that
satisfies the doubling property \eqref{1.1} and  
$B = B(x,r)$ is a ball centered on $\Sigma$ that satisfies \eqref{1.19} with $C_1$ small enough,
then there is a Borel set $A \subset B$, such that \eqref{1.20} holds,
$A$ is contained in a $\gamma$-Lipschitz graph of dimension $d$,
and 
\begin{equation} \label{1.23}
(1-\gamma) \frac{\H^d(Z)}{r^d} \leq  \frac{\mu(Z)}{ \mu(B)}
\leq (1+\gamma)\frac{ \H^d(Z)}{r^d}
\ \text{ for every Borel set } Z \subset A.
\end{equation}
\end{theorem}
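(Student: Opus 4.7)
The plan is to combine Theorem \ref{t1.6} with quantitative refinements that exploit the smallness of $C_1$. Set $\gamma' = \gamma'(n, d, \CD, \gamma) > 0$ (to be chosen) and apply Theorem \ref{t1.6} with $\gamma'$ in place of $\gamma$: this produces a set $A_0 \subset B$ with $\mu(B \setminus A_0) \leq \gamma' \mu(B)$ on which $\mu$ and $\H^d$ are mutually absolutely continuous and bi-Lipschitz equivalent to a subset of $\R^d$. Chebyshev's inequality applied to \eqref{1.19} then provides a further large set
\[
E = \Bigl\{y \in B \, : \, \int_0^{2r} \alpha_d(y,t)\, \frac{dt}{t} \leq \sqrt{C_1}\Bigr\}
\]
with $\mu(B \setminus E) \leq \sqrt{C_1}\,\mu(B)$. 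We would work on $A_1 := A_0 \cap E$; on this set, the pointwise integral of $\alpha_d$ is small at every $y$.

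Next, we would establish plane stability on $A_1$. Choose near-optimal planes $V_t(y) \in A'(d,n)$ in \eqref{1.9}. A comparison lemma (expected from the proof machinery of Theorem \ref{t1.6}) says that consecutive dyadic planes $V_{2^{-k}r}(y)$ and $V_{2^{-k-1}r}(y)$ differ by $O(\alpha_d(y, 2^{-k}r))$ in angle and position, since $\mu_{y,t}$ is $\W_1$-close to both $\nu_{V_t(y)}$ and (after rescaling) $\nu_{V_{t/2}(y)}$. Telescoping this estimate and using $y \in E$ gives a tangent plane $W(y)$ at $y$ with $V_t(y)$ within angle $O(\sqrt{C_1})$ of $W(y)$ for all $t \in (0, 2r]$. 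Fixing $y_0 \in A_1$ and comparing $W(y_0)$ with $W(y)$ via the common approximation at scale $\sim |y-y_0|$, all $W(y)$ for $y \in A_1$ lie within angle $O(\sqrt{C_1})$ of the fixed plane $V_0 := W(y_0)$. This yields the uniform cone condition that $(y-y')/|y-y'|$ makes angle $O(\sqrt{C_1})$ with $V_0$ for $y, y' \in A_1$. For $C_1$ small in terms of $\gamma$, the orthogonal projection onto $V_0$ is injective and bi-Lipschitz on $A_1$, so $A_1$ lies in a $\gamma$-Lipschitz graph $\Gamma$ over $V_0$; we set $A := A_1 \cap \Gamma$, with the discarded mass absorbed into $\gamma' + \sqrt{C_1} \leq \gamma$.

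For the sharp density bound \eqref{1.23}, we would refine the Wasserstein analysis. By testing $\W_1(\mu_{y,t}, \nu_{V_t(y)})$ against a $1$-Lipschitz bump concentrated at $0$ and lower bounding $\mu_{y,t}$ near $0$ via doubling, the optimal plane $V_t(y)$ passes within $o(1)$ (as $C_1 \to 0$) of $y$, so the normalizing factor $c_{V_t(y)}$ is $1 + o(1)$ uniformly. Combining this with $\W_1(\mu_{y,t}, \nu_{V_t(y)}) \leq 2\alpha_d(y,t)$, a dyadic comparison shows that $\mu(B(y, t/2))/(2^{-d} \mu(B(y,t))) = 1 + O(\alpha_d(y,t))$, and summing over scales gives $\mu(B(y,t))/t^d = (1 + o(1))\, \mu(B)/r^d$ for all $y \in A$ and $t \in (0, 2r]$. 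On the near-flat graph $\Gamma$, the Radon-Nikodym derivative $d\mu/d(\H^d\res\Gamma)$ equals the limit of these density ratios by Lebesgue differentiation, hence equals $(1 + o(1))\mu(B)/r^d$ almost everywhere on $A$; integrating yields \eqref{1.23}.

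The hardest step is the sharp density estimate: converting the bi-Lipschitz conclusion of Theorem \ref{t1.6} into near-isometric constants $(1 \pm \gamma)$. The core difficulty is that without an a priori dimension bound on $\mu$, the lower bound on $\mu_{y,t}(B(0,s))$ from doubling decays only like $s^{\log_2 \CD}$ rather than $s^d$, which weakens the Wasserstein-based argument that forces $V_t(y)$ to pass near $y$; controlling the rate at which $c_{V_t(y)} \to 1$ and ensuring this rate is summable across all dyadic scales is where the quantitative bookkeeping is most delicate.
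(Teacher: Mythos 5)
Your overall architecture is close to the paper's actual proof (Chebyshev on \eqref{1.19} to get a large set where $\int_0^1\alpha_d(y,t)\,dt/t$ is pointwise small, telescoping of the plane comparisons from Lemma \ref{t2.8} to get a uniform cone condition and hence a $\gamma$-Lipschitz graph, and a dyadic density comparison for \eqref{1.23}), but your first step is a detour the paper deliberately avoids: Theorem \ref{t1.7} is proved \emph{before} Theorem \ref{t1.6} precisely because it needs no stopping-time or corona machinery, and nothing you take from Theorem \ref{t1.6} (a bi-Lipschitz chart with a large constant $L$, and mutual absolute continuity with non-sharp constants) is actually used downstream in your argument --- the set $E$ alone suffices, and in the paper $A$ \emph{is} the Chebyshev set $\{J(x)\leq \gamma^{-1}C_1\}$.

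The genuine gap is in the density step. The claimed per-scale estimate $\mu(B(y,t/2))/(2^{-d}\mu(B(y,t)))=1+O(\alpha_d(y,t))$ is not available: comparing both balls to $\nu_{V_d(y,t)}$ produces an error coming from the normalizing constant $c_{V}$, equivalently from the distance $\delta$ of the optimal plane to the center, and Lemma \ref{t2.6} only gives $\delta\lesssim \alpha_d(y,t)^{\eta}$ with $\eta$ determined by $\CD$ (this is exactly the $s^{\log_2\CD}$ obstruction you mention at the end). The resulting per-scale error is $O(\alpha_d(y,t)^{2\eta})$, and $\sum_k \alpha_d(y,2^{-k}r)^{2\eta}$ is \emph{not} controlled by $\sum_k\alpha_d(y,2^{-k}r)\leq \sqrt{C_1}$, so ``summing over scales'' does not close. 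You identify this as the delicate point but do not resolve it. The paper's resolution (Subsection \ref{goodclients}, Lemma \ref{t2.10} and Corollary \ref{t2.11}) is to work with the \emph{stabilized} density $\theta_d^\ast(x,r)=r^{-d}c_d(x,r)\mu(B(x,r))$ of \eqref{2.63}: by comparing two comparable scales to a common ``good'' pair $(y,t)$ one gets $\big|\log\big(\theta_d^\ast(x,r)/\theta_d^\ast(x,\rho)\big)\big|\leq C(\alpha_d(x,r)+\alpha_d(x,\rho))$, which \emph{is} summable, because the troublesome factors $c_d$ enter only as a ratio that telescopes and therefore need to be estimated (via Lemma \ref{t2.6}) only at the two endpoint scales, not at every intermediate one. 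Without this device, or an equivalent telescoping of the $c_V$'s, your chain of dyadic comparisons does not yield the $(1\pm\gamma)$ constants in \eqref{1.23}.
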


\ms
The following theorem will be deduced from Theorem \ref{t1.7} by a standard
localization argument. Let us first define \textit{uniformly rectifiable sets}.
Let $E \subset \R^n$ be an Ahlfors regular set of dimension $d$; this means that
the restriction of $\H^d$ to $E$ satisfies \eqref{1.4}, or equivalently that there
is an Ahlfors regular measure $\mu$ (as in \eqref{1.4}), such that the support of
$\mu$ is equal to $E$ (if this is the case $\mu$ and $\H^d\res E$ are such that
$C^{-1} \mu \leq \H^d\res E \leq C \mu$). We say that $E$ is
uniformly rectifiable if there are constants $L \geq 1$ and $\eta > 0$
such that, for $x\in E$ and $r > 0$, we can find a Borel set $A \subset E \cap B(x,r)$
such that $\H^d(A) \geq \eta r^d$ and an $L$-bi-Lipschitz
mapping $f : A \to f(A) \subset \R^d$. In the language of \cite{of-and-on}, 
$E$ contains big pieces of bi-Lipschitz images of $\R^d$ (BPBI). 
Many other characterizations of uniform rectifiability exist, see \cite{of-and-on}.
The one above works well in this context.
In fact $\Sigma$, the support of $\mu$, satisfies that following slightly stronger 
property, namely  $\Sigma$ 
contains big pieces of Lipschitz graphs. 
This means that there are constants $\eta > 0$
and $\gamma \geq 0$ such that, for $x\in \Sigma$ and $r > 0$, we can find a Borel set 
$A \subset \Sigma \cap B(x,r)$ such that $\H^d(A) \geq \eta r^d$ and
$A$ is contained in a $\gamma$-Lipschitz graph of dimension $d$ (see \eqref{1.22}).

\ms
\begin{theorem} \label{t1.8} 
For each $d\in\{1,\cdots, n\}$ and $\CD \geq 1$, there is a (small)
constant $C_1 > 0$ such that  if $\mu$ is a doubling measure with support
$\Sigma$ and  \eqref{1.19} holds for all $x\in \Sigma$
and $r > 0$. 
Then $\Sigma$ is a uniformly rectifiable set of dimension $d$, 
which contains big pieces of Lipschitz graphs, $\mu$ and  $\H^d\res\Sigma$ 
are mutually absolutely continuous. Moreover $\mu \in A_{\infty}(\H^d\res\Sigma)$.
\end{theorem}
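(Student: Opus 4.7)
My plan is to apply Theorem~\ref{t1.7} at every ball centered on $\Sigma$ and to assemble the local conclusions into the global statements. Fix $\gamma \in (0,1)$ small (depending only on $\CD$ and $d$, so that the bootstrap below closes) and let $C_1 = C_1(d,\CD,\gamma)$ be the constant from Theorem~\ref{t1.7}. Under the hypothesis of Theorem~\ref{t1.8} with this value of $C_1$, the Carleson estimate \eqref{1.19} is available at every ball $B = B(x,r)$ centered on $\Sigma$, so Theorem~\ref{t1.7} produces a Borel set $A_{x,r}\subset B$ contained in a $\gamma$-Lipschitz graph $\Gamma_{x,r}$ of dimension $d$, with $\mu(B\setminus A_{x,r})\leq\gamma\mu(B)$ and the bilateral density comparison \eqref{1.23}.

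Plugging $Z=A_{x,r}$ into \eqref{1.23} and combining with $\mu(A_{x,r})\geq(1-\gamma)\mu(B)$ yields $\H^d(A_{x,r})\geq\tfrac{1-\gamma}{1+\gamma}\,r^d$. Hence $\Sigma\cap B(x,r)$ contains, at every location and scale, a big piece of the Lipschitz graph $\Gamma_{x,r}$; this is exactly the BPLG property (and hence BPBI) for $\Sigma$, and it also gives the lower Ahlfors bound $\H^d(\Sigma\cap B(x,r))\gec r^d$.

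The main obstacle is the matching upper Ahlfors bound $\H^d(\Sigma\cap B(x,r))\lec r^d$. From \eqref{1.23} one already has $\H^d(A_{x,r})\leq r^d/(1-\gamma)$ for free, so the task is to control the bad remainder $\Sigma\cap B\setminus A_{x,r}$, whose $\mu$-mass is $\leq\gamma\mu(B)$ but whose $\H^d$-mass is \emph{a priori} uncontrolled (doubling constrains $\mu$, not $\H^d$). I would iterate Theorem~\ref{t1.7} at successively smaller scales: cover the remainder by a Vitali-type family of balls $B(y_i,r/K)$ centered on $\Sigma$, apply Theorem~\ref{t1.7} inside each, and recurse. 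If $K$ and $\gamma$ are chosen appropriately (depending on $\CD$ and $d$), the $\H^d$-contributions of the newly-captured Lipschitz-graph pieces across successive generations form a convergent geometric series summing to $\lec r^d$, while the $\mu$-mass of the uncovered residual contracts geometrically to zero. To upgrade $\mu$-nullity of this final residual to $\H^d$-nullity I invoke Theorem~\ref{t1.5}: the uniform Carleson hypothesis forces $\int_0^1\alpha_d(x,r)\frac{dr}{r}<\infty$ for $\mu$-a.e.\ $x$, placing $\mu$-a.e.\ point in the rectifiable set $\Sigma_0(d)$, on which $\mu$ and $\H^d$ are mutually absolutely continuous.

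With Ahlfors $d$-regularity of $\H^d\res\Sigma$ and BPLG in hand, $\Sigma$ is uniformly rectifiable by the definition preceding Theorem~\ref{t1.8}. For the $A_\infty$-claim, \eqref{1.23} says that $\mu$ and $\H^d$ are comparable on each $A_{x,r}$ with constants independent of $B$; combined with Ahlfors regularity and the smallness of both $\mu(B\setminus A_{x,r})$ and (by applying the same iteration on the $\H^d$-side, which is now permissible because $\H^d\res\Sigma$ is Ahlfors regular) $\H^d(\Sigma\cap B\setminus A_{x,r})$, the quantitative $A_\infty$ condition follows. Mutual absolute continuity of $\mu$ and $\H^d\res\Sigma$ is then an immediate corollary.
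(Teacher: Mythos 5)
Your reduction to Theorem~\ref{t1.7} is the right starting point, and the BPLG property plus the lower Ahlfors bound $\H^d(\Sigma\cap B(x,r))\gec r^d$ do come out exactly as you say. But the step you yourself identify as the main obstacle --- the upper bound $\H^d(\Sigma\cap B(x,r))\lec r^d$ --- is where the argument breaks. Your iterated Vitali covering does not produce a convergent series. At generation $j$ you cover the remainder by balls $B_i=B(y_i,r_i)$ with $r_i=r/K^j$ and each new graph piece satisfies, by \eqref{1.23} with $Z=A_i$, only $\H^d(A_i)\leq r_i^d/(1-\gamma)$; to sum these you need control on $\sum_i r_i^d$. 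There is no a priori relation between $r_i^d$ and $\mu(B_i)$ (that relation \emph{is} the density control one is trying to establish), so the only available bound on the number of balls is the doubling covering number, which can be as large as $C K^{j\log_2\CD}$; the generation-$j$ contribution is then $\lec K^{j(\log_2\CD-d)}r^d$, which diverges whenever $\log_2\CD>d$. Nor does the $\mu$-mass of the remainder contract as claimed: a Besicovitch cover of $R_{j-1}$ by balls of radius $r/K^j$ has total $\mu$-mass comparable to the $\mu$-measure of a \emph{neighborhood} of $R_{j-1}$, which can be all of $\mu(B)$ even when $\mu(R_{j-1})$ is tiny. The paper's resolution is entirely different and is the one genuinely new ingredient of Section~\ref{UR}: it shows (Lemma~\ref{t7.1}) that $\log\theta_d\in\BMO(\mu)$ with norm $\leq CC_1^\eta$, using the pointwise estimate \eqref{7.4}, valid $\mu$-a.e.\ and not just on the good set, $\big|\log(\theta_d(y)/2^d\mu(B))\big|\leq C(\gamma^{-1}C_1)^\eta+CJ(y)$, together with the Carleson hypothesis to bound $\fint_B J\,d\mu$. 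John--Nirenberg (for doubling measures, with the pseudo-cubes of Subsection~\ref{cubes}) then gives exponential integrability of $\theta_d^{-1}$, and the upper Ahlfors bound is the computation $\H^d(\Sigma\cap B)=\int_B\theta_d^{-1}\,d\mu=e^{-m(x,r)}\int_Be^{f-m(x,r)}\,d\mu\leq Ce^{-m(x,r)}\mu(B)\leq Cr^d$. This BMO mechanism, not a covering iteration, is what controls the Hausdorff measure.

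A secondary gap: your appeal to Theorem~\ref{t1.5} to upgrade $\mu$-nullity of the final residual to $\H^d$-nullity does not work as stated, because mutual absolute continuity is only asserted \emph{on} $\Sigma_0(d)$, while the residual may meet $\Sigma\sm\Sigma_0$, which is $\mu$-null but can carry positive (even locally infinite) $\H^d$-measure --- see Example~\ref{t4.4}. The paper proves $\H^d(\Sigma\sm\Sigma_0)=0$ by a separate density argument: once $\H^d(\Sigma_0\cap B(0,R))<\infty$ is known (again from $\int\theta_d^{-1}d\mu<\infty$), Mattila's upper-density theorem forces $\limsup_{r\to0}r^{-d}\H^d(\Sigma_0\cap B(x,r))=0$ at $\H^d$-a.e.\ point of $\Sigma\sm\Sigma_0$, contradicting the uniform lower bound $\H^d(\Sigma_0\cap B(x,r))\geq C^{-1}r^d$ coming from the big Lipschitz pieces. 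You would need to supply both of these steps, and both rest on the BMO/John--Nirenberg estimate your proposal omits.
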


\ms
Note that the definition of uniform rectifiability includes the Ahlfors regularity
of the restriction of $\H^d$ to $\Sigma$, i.e. $\H^d\res\Sigma$. 
Here $A_{\infty}$ is the Muckenhoupt class. Recall that 
$\mu \in A_{\infty}(\H^d\res\Sigma)$ when both measures are locally finite, one of them is assumed to be doubling 
and there exist constants
$\varepsilon \in (0,1)$ and $\delta \in (0,1)$ such that if
$\mu(A ) \leq \varepsilon \mu(B)$  then  $\H^d(A \cap \Sigma)
\leq \delta \H^d(B \cap \Sigma)$ whenever $B$ is a ball centered
on $\Sigma$ and $A \subset B$ is a Borel set. 
Also, $\H^d\res\Sigma \in A_{\infty}(\mu)$ if and only if
$\mu \in A_{\infty}(\H^d\res\Sigma)$.
See for instance \cite{GarciaCuerva} or \cite{Journe}.

We show that $\Sigma$ satisfies a slightly stronger property than the one stated above.
More precisely we prove that for each $x\in \Sigma$ and each $r>0$ there exits a $\gamma$-Lipschitz function whose graph
$\Gamma$ locally covers $\Sigma\cap B(x,r)$, and $\gamma$ and $r^{-d}\H^d(\Sigma \sm \Gamma)$ can be made small
depending on $C_1$.
In Theorem \ref{t1.8} the Ahlfors regularity and uniform rectifiability constants for $\Sigma$,
as well as the $A_{\infty}$ constants $\varepsilon \in (0,1)$ and $\delta \in (0,1)$,
can be chosen to depend only on $d$, and $\CD$.

While by assuming that  $C_1$ is small in Theorem \ref{t1.8} to get the uniform rectifiability
of $\Sigma$, if we assume that the Carleson condition of Theorem \ref{t1.8} is satisfied with
some large $C_1$ we can only show that $\mu$ is uniformly rectifiable, that is
the conclusion of Theorem \ref{t1.6} holds for every ball $B(x,r)$ centered on $\Sigma$.
We note that $\Sigma$ may fail to be uniformly rectifiable, because 
$\H^d(B(0,1)) = \infty$. See  Example \ref{t4.5}.
 
Theorem \ref{t1.8} follows from Theorem \ref{t1.7}; the difficulty comes from
the fact that we need to control the Hausdorff measure of $\Sigma$.
We can achieve this by taking
$C_1$ small.
 
\subsection{Outline}

In Section \ref{prelim}, we introduce notation and various estimates that will help
for the later proofs. In particular, we show that the numbers $\alpha(x,r)$
control the variations of density (see Subsections \ref{measure} and \ref{goodclients}), 
and the Jones numbers $\beta(x,r)$ (see Subsection \ref{beta}).
 We prove Theorem \ref{t1.5} in Section \ref{proof1}; the main ingredient is
the control of density that was obtained in Section \ref{prelim}.
In Section \ref{examples} we present some examples, 
that mainly illustrate Theorems \ref{t1.5} and \ref{t1.8}.
In Section \ref{proof2} we prove Theorem \ref{t1.7}. In the case when $C_1$ is small
assumption \eqref{1.19} ensures that on a large subset of $\Sigma\cap B(x,r)$ the 
series $\sum\alpha_d(\cdot, 2^{-j})$ converges which yields valuable information when trying to control
the measure $\mu$ and its support $\Sigma$.
Section \ref{proof3} is devoted to the proof of Theorem \ref{t1.6}.
We need a stopping time argument, and so we follow a standard route in
the theory of uniformly rectifiable sets: we present adapted dyadic cubes in
Subsection \ref{cubes}, construct a Lipschitz graph associated
to a stopping time region in Subsection \ref{stop}, control the
number of stopping time regions in Subsection \ref{corona}, and 
construct the desired big piece of bi-Lipschitz image 
in Subsection \ref{s:big-piece}. We conclude in Subsection \ref{density}
with the required absolute continuity and density estimates.
Section \ref{UR} contains the proof of Theorem \ref{t1.8}.

 \subsection{Acknowledgements}

The authors are grateful to Alessio Figalli and Xavier Tolsa for helpful discussions. 
The first author would like to thank IPAM for its hospitality, 
part of this manuscript was written while he was in residence there.

\section{Preliminary Results} \label{prelim}

\subsection{Notation, tangent measures, and atoms}
\label{tgm}

We denote by $B(x,r)$ the open ball of center $x$  and radius $r>0$ 
and $\bB:=B(0,1)$. 
For nonempty sets $E,F\subseteq \bR^{n}$ and $x\in \R^n$, we write
\[\diam E=\sup\{|x-y| \, ; \, x,y\in E\},
\quad \dist(x,F)= \inf\{|x-y| \, ; \, y\in F\}, \]
and
\[
\dist(E,F)=\inf\{|x-y| \, ; \, x\in E,y\in F\}.
\]

We denote by $G(d,n)$ the set of $d$-dimensional subspaces of $\bR^{n}$,  
by $A(d,n)$ the set of $d$-dimensional affine planes in $\bR^{n}$, and by
$A'(d,n)$ the set of $d$-planes $V \in A'(d,n)$ that intersect $B(0,1/2)$.

A function $f: A \subset \bR^{n}\rightarrow U$, with $U \subset \R^m$
for some $m$ is \textit{$L$-Lipschitz} on $A$ if there exists $L>0$ such that
\begin{equation} \label{2.1}
|f(x)-f(y)|\leq L|x-y|  \text{ for } x, y \in A
\end{equation}
and it is {\it $L$-bi-Lipschitz} on $A$ if
\begin{equation} \label{2.2}
\frac{1}{L}|x-y|\leq |f(x)-f(y)|\leq L|x-y|
\text{ for } x, y \in A.
\end{equation}
The smallest $L$ such that \eqref{2.1} holds will be denoted by
$||f||_{\Lip(A)}$, or $||f||_{\Lip}$ if $A$ is clear from the context.

Unless otherwise specified, $\mu$ will denote a Borel regular doubling measure, 
with support $\Sigma$ (see \eqref{1.0}) and doubling constant $CD$ (as in \eqref{1.1}).
In general, $C$ denotes a constant whose value may change from line to line. We
try to be explicit about the dependence of $C$ on various parameters.

\begin{definition}  \cite{Preiss87}\label{t1.4}
For $x\in \Sigma$ (the support of $\mu$), we denote by 
$\Tan(\mu,x)$ the space of {\it tangent measures to $\mu$ at  $x$}.
$\Tan(\mu,x)$ is the set of Radon measures $\nu$
that can be obtained as the weak limit of a sequences
$\{ \mu_k \}$, where for each $k$, $\mu_k(A) = c_k \mu(x+r_k A)$
for $A \subset \R^n$ (a Borel set), with constants $c_k \geq 0$ and radii
$r_k > 0$ that tend to $0$.
\end{definition}

By weak convergence, we mean that 
$\int f d\nu = \lim_{k \to \infty}\int f d\mu_k$
for every continuous function $f$ with compact support. Here $\mu$ is doubling,
so it is not hard to show that $\Tan(\mu,x)$ contains some nontrivial measures.
For $x\in \Sigma$ and $r>0$
Define a measure $\mu_{x,r}$ on $\R^n$ as in \eqref{1.6}
The normalization and the doubling condition yield
\begin{equation} \label{2.5}
\mu_{x,r}(B(0,2^j)) \leq \CD^j
\ \text{ for } j \in \bN.
\end{equation}
Thus given a sequence $\{ r_k \}$
that tends to $0$, \eqref{2.5}  ensures that there is a subsequence for which the $\mu_{x,r_k}$s converge
weakly (as in Definition \ref{t1.4}) to a measure $\mu_\infty$.
Since
$1 \leq \mu_\infty(\overline \bB) \leq \CD$, then $\mu_\infty \neq 0$ 
and $\mu_\infty \in \Tan(\mu,x)$.

\begin{remark} \label{t2.1}
If $\mu$ is a doubling measure all its tangent measures are obtained by the process above. In fact, if  $\sigma \in \Tan(\mu,x)$ then by Definition \ref{t1.4}, $\sigma$ is a weak limit of measures $\sigma_k$,
where for each $k$, $\sigma_k(A) = c_k \mu(r_k A+x)$, with 
$c_k \geq 0$ and a sequence $\{ r_k \}$ that tends to $0$. Replacing
$\{ r_k \}$ with
 a subsequence for which the $\mu_{x,r_k}$ converge weakly to
 a limit $\mu_{\infty}$, as above, we note that since $\sigma_k = c_k\mu(B(x_k,r_k))\mu_{x,r_k}$ then
 $\sigma=c\mu_\infty$ where 
 $c=\lim_{k\rightarrow\infty} c_k\mu(B(x_k,r_k))$.
 \end{remark}

Recall that a measure $\mu$ is said to have an atom at $x\in \R^n$ if $\mu(\{x\}) > 0$.

\ms
\begin{lemma} \label{t2.2}
If the doubling measure $\mu$ has an atom at $x_0$, then
$x_0$ is an isolated point of $\Sigma$.
\end{lemma}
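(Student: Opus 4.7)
The plan is to argue by contradiction. Suppose $x_0$ is not an isolated point of $\Sigma$, so there exists a sequence $\{y_k\} \subset \Sigma \setminus \{x_0\}$ with $y_k \to x_0$. Set $r_k = |y_k - x_0|>0$, so $r_k \to 0$, and let $a := \mu(\{x_0\}) > 0$. The strategy is to show that the atom at $x_0$ forces a nontrivial amount of $\mu$-mass to live in each ball $B(y_k, r_k/2)$, which is disjoint from $\{x_0\}$, and then to derive a contradiction with continuity of $\mu$ from above.

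First I would record the geometric inclusions. The ball $B(y_k, r_k/2)$ misses $x_0$ since its center lies at distance $r_k$ from $x_0$; at the same time, $B(y_k, 4 r_k) \supset B(x_0, 2 r_k) \ni x_0$, because any $z\in B(x_0,2r_k)$ satisfies $|z-y_k|\le |z-x_0|+r_k<3r_k$. Therefore
\[
\mu(B(y_k, 4 r_k)) \geq \mu(\{x_0\}) = a.
\]

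Next I would apply the doubling property \eqref{1.1} three times at $y_k\in\Sigma$, stepping through the radii $r_k/2,\,r_k,\,2r_k,\,4r_k$, to obtain
\[
\mu(B(y_k, r_k/2)) \geq \CD^{-3}\, \mu(B(y_k, 4 r_k)) \geq \CD^{-3}\, a.
\]
Since $B(y_k, r_k/2) \subset B(x_0, 2 r_k)$ and is disjoint from $\{x_0\}$, the disjoint union $\{x_0\}\sqcup B(y_k,r_k/2)$ sits inside $B(x_0,2r_k)$, so
\[
\mu(B(x_0, 2 r_k)) \geq a + \CD^{-3}\, a = a\,(1 + \CD^{-3}).
\]

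Finally, because $\mu$ is a Radon measure (hence finite on the bounded set $B(x_0,2r_1)$) and $\bigcap_k B(x_0, 2r_k) = \{x_0\}$, continuity of $\mu$ from above yields $\mu(B(x_0, 2 r_k)) \to a$ as $k \to \infty$. Passing to the limit in the previous display forces $a \geq a(1 + \CD^{-3})$, which is impossible since $a > 0$ and $\CD<\infty$. The argument is essentially routine; the only real care is choosing the radii so that $B(y_k, 4r_k)$ is wide enough for doubling at $y_k$ to detect the atom at $x_0$, while $B(y_k,r_k/2)$ still misses $x_0$ and fits inside a ball around $x_0$ whose measure shrinks to $a$.
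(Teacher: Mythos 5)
Your proof is correct, and it shares the key mechanism with the paper's proof --- using the doubling property at the nearby points $y_k$ to force each small ball $B(y_k,\cdot)$, which misses $x_0$, to carry a fixed fraction $\CD^{-3}\mu(\{x_0\})$ of the atom's mass --- but the way you extract the contradiction is different. The paper passes to a subsequence with $|x_{k+1}-x_0|\le \frac14|x_k-x_0|$ so that the balls $B_k=B(x_k,|x_k-x_0|/2)$ are pairwise disjoint, and then sums the uniform lower bound $\mu(B_k)\ge \CD^{-2}\mu(\{x_0\})$ over all $k$ to contradict local finiteness of $\mu$ on $B(x_0,1)$. You instead work one scale at a time and play the lower bound $\mu(B(x_0,2r_k))\ge a(1+\CD^{-3})$ against the fact that $\mu(B(x_0,2r_k))\to\mu(\{x_0\})=a$. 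Your route avoids the subsequence/disjointness step, at the cost of invoking continuity from above; the paper's route avoids that but needs the sparsification. One small point of hygiene in your write-up: the balls $B(x_0,2r_k)$ need not be nested, so ``continuity from above'' does not apply to them literally as stated; either pass to a subsequence with $r_k$ decreasing, or note that $\mu(B(x_0,2r_k))\le \mu(\overline B(x_0,s))$ for any $s>2r_k$ and that the closed balls $\overline B(x_0,1/m)$ do decrease to $\{x_0\}$, so that $\lim_{s\to 0}\mu(\overline B(x_0,s))=a$ and hence $\limsup_k \mu(B(x_0,2r_k))\le a$. This is a one-line fix, not a gap.
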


\begin{proof}
Suppose $\mu$ has an atom at $x_{0}\in\Sigma$, and that there is a sequence 
$\{ x_k \}$ in $\Sigma \sm \{ x_0 \}$ that tends to $x_0$.  Assume that $x_k\in B(x_0,1/2)$
By passing to a subsequence, we may assume that 
$|x_{k+1}-x_0| \leq {1 \over 4} |x_{k}-x_0|$ for $k \geq 0$,
and then the balls $B_k = B(x_k,|x_{k}-x_0|/2)$ are disjoint.
Since $x_0 \in 4B_k$, the doubling property yields
\[
\mu (\{x_{0}\}) \leq \mu(4B_k) \leq \CD^2 \mu(B_k)
\]
for each $k$. Since $B_k\subset B(x_0,1)$ are disjoint then
\[
\mu(B(x_0,1))\geq \sum_k \mu(B_k) \geq \CD^{-2} \sum_k \mu (\{x_{0}\}) =\infty,
\]
which contradicts the fact that $\mu$ is a Radon measure.
\end{proof}

\subsection{ The $\alpha$ numbers control the measure of some balls}
\label{measure}

The main result of this subsection is the next lemma, which allows us to keep 
under control difficulties that come from potential rapid variations of the normalizing
factor $\mu(B(x,r))$.

\begin{lemma} \label{t2.3} 
Let $\mu$ and $\nu$ be two probability measures on $\bB=B(0,1)$
(not necessarily doubling)
and let $\phi : \overline \bB \to [0,1]$ be a $L$-Lipschitz function
such that
\begin{equation}  \label{2.6}
\phi(x) = 1 \ \text{ for } x\in \d \bB.
\end{equation}
Set 
\begin{equation}  \label{2.7}
F(t) = \mu(\phi^{-1}([0,t))) \ \text{ and } \ 
G(t) = \nu(\phi^{-1}([0,t))) \ \text{ for } t\in [0,1].
\end{equation}
Then 
\begin{equation} \label{2.8}
\int_0^1 |F(t)-G(t)| dt \leq L \W_1(\mu,\nu).
\end{equation}
As a consequence, if we set
\begin{equation} \label{2.9}
A_{\varepsilon} = \Big\{ t\in (0,1) \, ;  
\big|F(t) - G(t)\big| 
\leq {L \W_1(\mu,\nu) \over \varepsilon} \Big\},
\end{equation}
we get that $|(0,1) \sm A_\varepsilon| \leq \varepsilon$.
\end{lemma}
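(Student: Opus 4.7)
The plan is to recognize $\int_0^1 |F(t)-G(t)|\,dt$ as the classical one-dimensional $L^1$-Wasserstein distance between the pushforward probability measures $\phi_\ast \mu$ and $\phi_\ast \nu$ on $[0,1]$, and then to dualize this into a supremum over $1$-Lipschitz test functions. Indeed, $F$ and $G$ are just the distribution functions of $\phi_\ast \mu$ and $\phi_\ast \nu$, and the classical identity
\begin{equation*}
\int_{\R} |F_{\mu_1}(t)-F_{\mu_2}(t)|\,dt
\;=\; \sup_{h}\Bigl|\int h\,d\mu_1 - \int h\,d\mu_2\Bigr|
\end{equation*}
(Kantorovich--Rubinstein in one dimension, the sup taken over $1$-Lipschitz $h:\R\to\R$) turns the problem into estimating $\bigl|\int h\,d(\phi_\ast\mu - \phi_\ast\nu)\bigr|= \bigl|\int h\circ\phi\, d(\mu-\nu)\bigr|$ for an arbitrary $1$-Lipschitz $h$ on $[0,1]$.

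Next I would massage $h\circ\phi$ into a legitimate test function for the definition of $\W_1(\mu,\nu)$. The key observation is that the difference $\int h\circ\phi\,d\mu - \int h\circ\phi\,d\nu$ is unchanged if $h$ is replaced by $h - h(1)$ (because $\mu,\nu$ are both probability measures on $\bB$), so one may assume $h(1)=0$. Since $\phi \equiv 1$ on $\d\bB$ by \eqref{2.6}, this normalization forces $h\circ\phi$ to vanish on $\d\bB$. Extending $h\circ\phi$ by $0$ outside $\overline{\bB}$ then produces an $L$-Lipschitz function on $\R^n$ supported in $\overline{\bB}$: the Lipschitz bound is preserved because any segment from a point $x\in\overline\bB$ to a point $y\notin\bB$ crosses $\d\bB$ at some $z$ with $|x-z|\le|x-y|$, and $h\circ\phi$ already vanishes at $z$. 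Therefore $L^{-1}(h\circ\phi)$ is admissible in the definition \eqref{1.5}, giving
\begin{equation*}
\Bigl|\int h\circ\phi\,d\mu - \int h\circ\phi\,d\nu\Bigr|\;\le\; L\,\W_1(\mu,\nu).
\end{equation*}
Taking the supremum over $h$ and combining with the duality identity yields \eqref{2.8}.

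The consequence about $A_\varepsilon$ is then just Chebyshev's inequality: on $(0,1)\setminus A_\varepsilon$ the integrand $|F-G|$ exceeds $L\W_1(\mu,\nu)/\varepsilon$, so by \eqref{2.8}
\begin{equation*}
|(0,1)\setminus A_\varepsilon|\cdot\frac{L\,\W_1(\mu,\nu)}{\varepsilon}
\;\le\; \int_0^1 |F(t)-G(t)|\,dt \;\le\; L\,\W_1(\mu,\nu),
\end{equation*}
which gives $|(0,1)\setminus A_\varepsilon|\le \varepsilon$. (If $\W_1(\mu,\nu)=0$ the set $A_\varepsilon$ is all of $(0,1)$ and there is nothing to prove.)

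There is no real obstacle in this argument; the only subtle step is verifying that the extension of $h\circ\phi$ by zero across $\d\bB$ preserves the Lipschitz constant, which requires the normalization $h(1)=0$ together with hypothesis \eqref{2.6}. This is precisely why $\phi$ is assumed to equal $1$ (rather than merely be constant) on the boundary: it lets one align $h\circ\phi$ with the class of functions allowed by Definition \ref{t1.2}.
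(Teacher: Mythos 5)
Your proposal is correct and follows essentially the same route as the paper: the paper proves the one-dimensional duality identity by hand (pushing $\mu,\nu$ forward by $\phi$, integrating by parts against a $1$-Lipschitz $\psi$ with $\psi(1)=0$, and then using $L^1$--$L^\infty$ duality on $\psi'$), whereas you invoke the classical Kantorovich--Rubinstein identity on $[0,1]$ directly; the key step of composing with $\phi$, using \eqref{2.6} and the normalization $h(1)=0$ to make $L^{-1}(h\circ\phi)$ admissible in \eqref{1.5}, and the concluding Chebyshev argument are identical. Your explicit check that the zero extension across $\d\bB$ preserves the Lipschitz constant is a nice touch that the paper leaves implicit.
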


\ms
Let us comment on the lemma before we prove it. 
We typically use it with $\mu_{x,r}$ in the role of $\mu$, 
and some $\nu_V$ in the role of $\nu$; here $\nu_V$ is a multiple of the Hausdorff measure on the subspace $V$. A typical function $\phi$ 
will be of the form $\phi(x) = \min(1,L|x-x_0|)$, and the functions 
$F$ and $G$ will give some information on the measure of balls.
We prefer to use the definition of $A_{\varepsilon}$ above, because
we want to make sure that $|(0,1) \sm A_\varepsilon| \leq \varepsilon$,
so that various sets $A_\varepsilon$ intersect.
The influence of the large constant $\varepsilon^{-1}$, 
is often compensated by $\W_1(\mu,\nu)$ which is assumed to be very small.

\begin{proof}
Let $\psi:[0,1]\rightarrow\bR$ be $1$-Lipschitz and such that $\psi(1)=0$. 
Then $\psi \circ \phi$ is $L$-Lipschitz and vanishes on $\d \bB$,
so the definition \eqref{1.5} of $\W_1(\mu,\nu)$ 
(applied to $L^{-1} \psi \circ \phi$ if $L > 0$) yields
\begin{equation} \label{2.10}
\left|\int \psi\circ\phi \, d\mu-\int \psi\circ\phi \, d\nu \right| \leq L\W_1(\mu,\nu).
\end{equation}

Denote by $\sigma$ the image of $\mu$, pushed by $\phi$.
Thus $\sigma$ is supported by $[0,1]$, and 
$\sigma([0,a)) = \mu(\phi^{-1}([0,a)) = F(a)$ for $a > 0$.
We claim that 
\begin{equation} \label{2.11}
\int_{\bB}\widetilde \psi\circ\phi \, d\mu = \int_{[0,1]} \widetilde\psi d\sigma
\end{equation}
for all bounded measurable functions $\widetilde\psi$ (in particular for $\psi$ as in \eqref{2.10}).
When $\widetilde\psi$ is the characteristic function of some set $Z$,
the left-hand side is $\mu(\phi^{-1}(Z))$, which is equal to
$\sigma(Z)$ by definition. The general case follows by a standard 
measure-theoretic argument.

Using integration by parts, the fact that $\psi(1)=0$ and Fubini, we have 
\begin{eqnarray}\label{2.11A}
\int_{[0,1]} \psi(t) d\sigma(t) 
&= &- \int_{t \in [0,1]} \int_{u= t}^1 \psi'(u) du d\sigma(t) 
= - \int\int_{[0,1]^2} \1_{\{ t < u \}} \psi'(u) du d\sigma(t)\nonumber
\\ 
&= &- \int_{u \in (0,1]} \psi'(u) \int_{t \in [0,u)} d\sigma(t) du\nonumber\\
&=& - \int_{u \in (0,1)} \psi'(u) F(u) du
\end{eqnarray}
We have similar formulas for
$\nu$ and $G$, so by \eqref{2.10}, \eqref{2.11} and \eqref{2.11A}
\[
\int_{u \in (0,1)} \psi'(u) [G(u)-F(u)] du
= \int \psi\circ\phi \, d\mu-\int \psi\circ\phi \, d\nu 
\leq L\W_1(\mu,\nu)
\]
This holds for all 1-Lipschitz 
functions $\psi : [0,1] \to \R$ such that $\psi(1)=0$, 
hence 
\begin{equation} \label{2.12}
\int_{(0,1)} h(u) [G(u)-F(u)] du \leq L\W_1(\mu,\nu)
\end{equation}
for every bounded function $h$, with $||h||_\infty \leq 1$
(just integrate $h$ to get a $\psi$).
This proves \eqref{2.8}; the fact that $|(0,1) \sm A_\varepsilon| \leq \varepsilon$,
and then the lemma, follow easily.
\end{proof}

\ms
We return to our doubling measure $\mu$, and first explain notation
that will be used systematically. 
For $d \in [0,n]$, $x\in \Sigma$, and $r > 0$, we choose an
affine space $V_d(x,r) \in A'(d,n)$ such that
\begin{equation} \label{2.13}
\W_{1}(\mu_{x,r},\nu_{V_d(x,r)})\leq 2 \alpha_{d}(x,r)
\end{equation}
(see Definition \ref{t1.3}). We also set
\begin{equation} \label{2.14}
c_d(x,r) = \H^d(V_{d}(x,r) \cap \bB)^{-1} \in [1,2^d]
\end{equation}
(by \eqref{1.8}) so that (by \eqref{1.7})
\begin{equation} \label{2.15}
\nu_{V_d(x,r)} = c_d(x,r) \H^d|_{V_{d}(x,r)}.
\end{equation}
Often $d$ will be fixed, and we may drop
it from the notation. But for Theorem~\ref{t1.5}, we work with all integers
$d$ between 1 and $n$ at the same time, so we choose an integer $d = d(x,r)$ such that
\begin{equation} \label{2.16}
\alpha(x,r)=\alpha_{d(x,r)}(x,r).
\end{equation}
We also use the other affine spaces
\begin{equation} \label{2.17}
W_d(x,r) = x + r V_d(x,r) 
\end{equation}
that pass near $x$ and are parallel to the previous ones.

\ms
Let us apply Lemma \ref{t2.3} in this context.
\begin{lemma} \label{t2.4}
For each choice of $d \in [0,n]$,
$x\in \Sigma$, $r > 0$, $y \in B(0,1/2)$, 
and $\varepsilon > 0$, there is a set $A \subset (0,1/2)$ such that
\begin{equation} \label{2.18}
|(0,1/2) \sm A| \leq \varepsilon,
\end{equation}
and 
\begin{equation} \label{2.19}
\big| \mu_{x,r}(B(y,s)) - \nu_{V_d(x,r)}(B(y,s)) \big|
\leq 4 \varepsilon^{-1} \alpha_d(x,r)
\ \text{ for } s \in A.
\end{equation}
\end{lemma}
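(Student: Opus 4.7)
The plan is to reduce the lemma to a direct application of Lemma \ref{t2.3}, by choosing a Lipschitz function $\phi$ whose sublevel sets are precisely the balls $B(y,s)$ we care about. The natural candidate, given that $y \in B(0,1/2)$, is
\[
\phi(z) = \min\bigl(1,\, 2|z-y|\bigr)\quad \text{for } z \in \overline{\bB},
\]
which is $2$-Lipschitz as the minimum of two $2$-Lipschitz functions. First I would verify that $\phi$ satisfies the boundary condition \eqref{2.6}: for $z \in \partial \bB$ and $y \in B(0,1/2)$, $|z-y| \geq |z|-|y| > 1/2$, so $2|z-y| > 1$ and $\phi(z)=1$.

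Next I would observe that for every $t \in (0,1]$,
\[
\phi^{-1}\bigl([0,t)\bigr) = \{z \in \overline{\bB} : 2|z-y| < t\} = B(y,t/2),
\]
so in the notation of Lemma \ref{t2.3} applied with $\mu = \mu_{x,r}$ and $\nu = \nu_{V_d(x,r)}$, one has $F(t) = \mu_{x,r}(B(y,t/2))$ and $G(t) = \nu_{V_d(x,r)}(B(y,t/2))$. Using \eqref{2.13}, I would bound $\W_1(\mu_{x,r},\nu_{V_d(x,r)}) \leq 2\alpha_d(x,r)$, so with $L=2$, Lemma \ref{t2.3} gives a set $A_\varepsilon \subset (0,1)$ with $|(0,1)\setminus A_\varepsilon| \leq \varepsilon$ on which
\[
\bigl|\mu_{x,r}(B(y,t/2)) - \nu_{V_d(x,r)}(B(y,t/2))\bigr| \leq \frac{L\,\W_1(\mu_{x,r},\nu_{V_d(x,r)})}{\varepsilon} \leq \frac{4\alpha_d(x,r)}{\varepsilon}.
\]

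Finally, I would pass to the variable $s=t/2$: setting $A = \{s \in (0,1/2) : 2s \in A_\varepsilon\}$ gives \eqref{2.19} directly, and the change of variable yields $|(0,1/2)\setminus A| = \tfrac{1}{2}|(0,1)\setminus A_\varepsilon| \leq \varepsilon/2 \leq \varepsilon$, proving \eqref{2.18}. There is no real obstacle here; the only thing to be careful about is the Lipschitz constant of $\phi$, which introduces the factor of $2$ that combines with the factor of $2$ in \eqref{2.13} to produce the exact constant $4$ in \eqref{2.19}. The condition $y \in B(0,1/2)$ is used precisely to ensure $\phi$ equals $1$ on $\partial \bB$ so that Lemma \ref{t2.3} applies.
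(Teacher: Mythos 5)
Your proof is correct and is essentially identical to the paper's argument: same choice of $\phi(z)=\min(1,2|z-y|)$, same application of Lemma \ref{t2.3} with $L=2$ combined with \eqref{2.13}, and the same rescaling $A=A_\varepsilon/2$. Your explicit verification of the boundary condition and the factor-of-two bookkeeping in \eqref{2.18} are slightly more detailed than the paper's, but nothing differs in substance.
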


\ms
\begin{proof}  
We want to apply Lemma \ref{t2.3} to the (restriction to $\bB$ of the)
measures $\mu_{x,r}$ and $\nu_{V_d(x,r)}$, and with 
\[
\phi(z) = \min(1,2|z-y|).
\]
Since $\phi$ is $2$-Lipschitz and $\phi(z) = 1$ on $\d \bB$,
Lemma \ref{t2.3} applies, with $L=2$.
Notice that $\phi^{-1}([0,t)) = B(y,t/2)$ for $0 < t < 1$. 
That is, with the notation of Lemma \ref{t2.3},
\begin{equation} \label{2.20}
F(t) = \mu_{x,r}(B(y,t/2)) \ \text{ and } \ 
G(t) = \nu_{V_d(x,r)}(B(y,t/2)).
\end{equation}
Let us take $A = A_\varepsilon/2 \subset (0,r/2)$,
where $A_\varepsilon$ is as in Lemma \ref{t2.3}.
Note that \eqref{2.18} holds because $|(0,1) \sm A| \leq \varepsilon$,
and \eqref{2.19} holds because, for $s\in A$, by \eqref{2.20}, \eqref{2.9}, and \eqref{2.13}
\begin{eqnarray}  \label{2.21}
\Big| \mu_{x,r}(B(y,s)) - \nu_{V_d(x,r)}(B(y,s)) \Big|
&=& |F(2s)-G(2s)| \leq  2 \varepsilon^{-1} \W_1(\mu_{x,r},\nu_{V_d(x,r)})
\nonumber \\
&\leq&  4 \varepsilon^{-1} \alpha_d(x,r).
\end{eqnarray}
 
\end{proof}

Let us comment again on the role of Lemma \ref{t2.3} and $\varepsilon$.
If we could apply the definition of $\W_1(\mu_{x,r},\nu_{V_d(x,r)})$
with $\psi = \1_{B(y,s)}$, we would directly get a better bound than 
\eqref{2.19}, valid for all $s \in (0,1/2)$. The standard way to deal with that
would be to use cut-off functions $\psi$ that look like $\1_{B(y,s)}$,
and then we would replace $\mu_{x,r}(B(y,s))$ by a slightly fuzzy quantity
that lies between $\mu_{x,r}(B(y,s))$ and $\mu_{x,r}(B(y,s+\varepsilon))$,
say. Instead we decide to keep the same numbers $\mu_{x,r}(B(y,s))$, and
pay for this with a larger bound (as above) and the obligation to restrict
to $s\in A$.

Note that $y$ in the statement does not need to lie in $\Sigma$;
this will be convenient because for some estimates we prefer to take it
in $V_d(x,r)$, for instance. The estimate \eqref{2.19} will be central,
but not always so easy to use because of the various normalizations
that it contains.

\subsection{The number $\alpha_d(x,2r)$ controls $\beta_d(x,r)$}
\label{beta}

Most of our estimates on the geometry of $\Sigma$ will use estimates
on the $\beta$-numbers. But also, if we want to use \eqref{2.19} efficiently,
it will be good to find points of $V_d(x,r)$ near $\Sigma$, because the 
homogeneity of $\nu_{V_d(x,r)}$ is better at those points.

It will be easier to control the numbers
\begin{equation} \label{2.22}
\beta_{d,1}(x,r)=\inf_{W\in A'(d,n)}\fint_{B(x,r)} \frac{\dist(z,W)}{r} d\mu(z),
\end{equation}
where $A'(d,n)$ is as in Definition \ref{t1.3}
and we restrict to $x\in \Sigma$ to make sure that the average is well defined.

The following lemma, which comes from \cite{Tolsa-uniform-rectifiability},
is the main result of this subsection. 
We include the proof because we use similar ideas later on.

\begin{lemma}(\cite[Lemma 3.2]{Tolsa-uniform-rectifiability}) \label{t2.5}
For $x\in \Sigma$ and $r > 0$,
\begin{equation}\label{2.23}
\beta_{d,1}(x,r) 
\leq \fint_{B(x,r)}\frac{\dist(z,W_d(x,2r))}{r} d\mu(z) 
\leq 16 \CD \alpha_{d}(x,2r).
\end{equation}
\end{lemma}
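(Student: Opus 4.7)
The first inequality is essentially free: $W_d(x,2r)=x+2rV_d(x,2r)$ is an affine $d$-plane, and since $V_d(x,2r)\in A'(d,n)$ meets $B(0,1/2)$, the plane $W_d(x,2r)$ meets $B(x,r)$, so it is an admissible competitor in the infimum defining $\beta_{d,1}(x,r)$. Thus only the second inequality requires work.

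For that, I would rescale. The substitution $z=x+2ry$ sends $B(x,r)$ to $B(0,1/2)$ and transforms $\dist(z,W_d(x,2r))$ into $2r\,\dist(y,V_d(x,2r))$. Writing $V=V_d(x,2r)$ for short and using the definition \eqref{1.6} of $\mu_{x,2r}$, this rewrites the middle quantity in \eqref{2.23} as
\[
\frac{2\,\mu(B(x,2r))}{\mu(B(x,r))}\int_{B(0,1/2)}\dist(y,V)\,d\mu_{x,2r}(y),
\]
and the doubling property \eqref{1.1} bounds the prefactor ratio by $2\CD$. So it remains to show
\[
\int_{B(0,1/2)}\dist(y,V)\,d\mu_{x,2r}(y)\le 8\,\alpha_d(x,2r).
\]

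To prove this, I would test the definition of $\W_1(\mu_{x,2r},\nu_V)$ against a cut-off version of $y\mapsto\dist(y,V)$. Concretely, set $\chi(y)=\min\bigl(1,2(1-|y|)_+\bigr)$ and $\psi(y)=\chi(y)\,\dist(y,V)$. Then $\psi$ is supported in $\bB$, vanishes on $\partial\bB$, equals $\dist(\cdot,V)$ on $B(0,1/2)$, and is non-negative everywhere. Since $V$ meets $B(0,1/2)$, we have $\dist(\cdot,V)\le 3/2$ on $\bB$; together with the fact that $\chi$ is $2$-Lipschitz and bounded by $1$ and $\dist(\cdot,V)$ is $1$-Lipschitz, the product rule gives $\mathrm{Lip}(\psi)\le 1+2\cdot(3/2)=4$. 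Because $\nu_V$ is supported on $V$ where $\dist(\cdot,V)=0$, we get $\int\psi\,d\nu_V=0$, and by non-negativity of $\psi$ outside $B(0,1/2)$,
\[
\int_{B(0,1/2)}\dist(y,V)\,d\mu_{x,2r}(y)\le\int\psi\,d\mu_{x,2r}=\int\psi\,d\mu_{x,2r}-\int\psi\,d\nu_V\le 4\,\W_1(\mu_{x,2r},\nu_V)\le 8\,\alpha_d(x,2r),
\]
the last step by \eqref{2.13}. Combining with the $2\CD$ factor gives the claimed bound $16\CD\,\alpha_d(x,2r)$.

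There is no genuine obstacle here; the only subtlety is book-keeping with constants, i.e., choosing a cut-off $\chi$ whose Lipschitz constant and sup-norm combine with the size of $\dist(\cdot,V)$ on $\bB$ to keep the final constant at $16\CD$. One could equally well use other cut-offs (e.g.\ a $1$-Lipschitz one) at the price of a slightly different numerical constant; what matters for subsequent arguments is only that the constant depends solely on $d$ and $\CD$.
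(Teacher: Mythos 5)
Your proof is correct and follows essentially the same route as the paper: after rescaling to $B(0,1/2)$ and absorbing the ratio $\mu(B(x,2r))/\mu(B(x,r))\le \CD$, both arguments test the definition of $\W_1(\mu_{x,2r},\nu_{V_d(x,2r)})$ against a cut-off multiple of $\dist(\cdot,V_d(x,2r))$, which integrates to zero against $\nu_{V_d(x,2r)}$. The only difference is cosmetic book-keeping (you keep the factor $2$ in the prefactor and get a $4$-Lipschitz test function, while the paper builds it into $\psi$ and gets an $8$-Lipschitz one), and the constants land at $16\CD$ either way.
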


\ms
\begin{proof}
The first inequality is obvious, so we prove the second one.
Let $\phi$ be the $2$-Lipschitz function such that 
$\1_{B(0,1/2)}\leq \phi \leq \1_{\bB}$, and define a Lipschitz
function $\psi$ by
\[
\psi(z)= 2\dist(z,V_{x,2r})\phi(z) \ \text{ for } z\in \overline \bB.
\]
Notice that $\psi$ is $8$-Lipschitz, and it vanishes on 
$\d\bB$ because $\phi$ does.
Also, $\int \psi d \nu_{V_d(x,2r)} = 0$, because $\psi = 0$
on $V_d(x,2r)$. Thus the definition \eqref{1.5} of $\W_1$ yields
\[
\int \psi d \mu_{x,2r} = 
\Big|\int \psi d \mu_{x,2r} - \int \psi d \nu_{V_d(x,2r)} \Big| 
\leq 8\W_1(\mu_{x,2r},\nu_{V_d(x,2r)})
\leq 16 \alpha_{d}(x,2r).
\]
But the change of variable $z = x+2ry$ yields
\begin{align*}
\fint_{B(x,r)}\frac{\dist(z,W_d(x,2r))}{r} &d\mu(z)
= \mu(B(x,r))^{-1} \int_{B(x,r)}\frac{\dist(z,W_d(x,2r))}{r} d\mu(z)
\nonumber \\
&={\mu(B(x,2r)) \over \mu(B(x,r))}
\int_{B(0,1/2)} 2\dist(y,V_d(x,2r)) d\mu_{x,2r}(y)
\nonumber \\
&= {\mu(B(x,2r)) \over \mu(B(x,r))}
\int_{B(0,1/2)} \psi d\mu_{x,2r}
\leq 16 \CD \alpha_{d}(x,2r)
\end{align*} 
by \eqref{1.6}, \eqref{2.17}, the fact that $\phi(y) = 1$ on $B(0,1/2)$,
and \eqref{1.1}.
\end{proof}

It is classical that for Ahlfors regular measures, the number
$\beta_{d,1}(x,r)$ gives some control on $\beta_{d,\infty}(x,r/2)$,
where we
\begin{equation} \label{2.24}
\beta_{d,\infty}(x,r)=\inf_{W\in A(d,n)} \ \sup_{y\in \Sigma \cap B(x,r)}
\frac{\dist(y,W)}{r}.
\end{equation}
See for instance \cite[p.27]{DS}. The same proof also yields that
\begin{equation} \label{2.25}
\beta_{d,\infty}(x,r/2) \leq C \beta_{d,1}(x,r)^\eta,
\end{equation}
where $C$ and $\eta > 0$ depend on the doubling constant $\CD$.
When $\mu$ is Ahlfors regular of dimension $d$, we get $\eta = 1/(d+1)$. 
Although \eqref{2.25} is not precise enough to be used systematically it yields useful information

For a general doubling measure we derive the analogue of \eqref{2.25} from a direct application
of the definitions.

\begin{lemma} \label{t2.6}
There are constants $C > 0$ and $\eta >0$, that depend only on $\CD$,
such that for $0 \leq d \leq n$,  $x\in \Sigma$, and $r > 0$,
\begin{equation} \label{2.26}
\beta_{d,\infty}(x,2r/3) \leq {3 \over 2} \sup_{y\in \Sigma \cap B(x,2r/3)}
\frac{\dist(y,W_d(x,r))}{r}
\leq C \alpha_d(x,r)^\eta.
\end{equation}
\end{lemma}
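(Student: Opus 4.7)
The first inequality is immediate: $W_d(x,r) \in A(d,n)$ is an admissible competitor in the infimum defining $\beta_{d,\infty}(x,2r/3)$, and the factor $3/2$ accounts for rescaling the radius from $2r/3$ back to $r$. So the content is in the second inequality, which says that if $\alpha_d(x,r)$ is small, then no point of $\Sigma \cap B(x,2r/3)$ can be far from $W_d(x,r)$.

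Fix $y \in \Sigma \cap B(x,2r/3)$, set $h = \dist(y, W_d(x,r))$, and write $t = h/r$. In rescaled coordinates $y' = (y-x)/r \in B(0, 2/3)$, the translated point $y'$ lies at distance $t$ from $V_d(x,r)$. The plan is to show that $t \leq C \alpha_d(x,r)^{\eta}$ for suitable constants depending only on $\CD$; taking the supremum over $y$ then proves \eqref{2.26}. We may assume $t$ is much smaller than $1$, since otherwise the desired bound holds trivially by adjusting $C$.

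The key observation is that for $s < t$, the ball $B(y', s)$ does not meet $V_d(x,r)$, so
\begin{equation*}
\nu_{V_d(x,r)}(B(y', s)) = 0.
\end{equation*}
On the other hand, since $y \in \Sigma$ and $|y-x| < r$, we have $B(x,r) \subset B(y, 2r)$, and iterating the doubling condition \eqref{1.1} from scale $2r$ down to scale $rs$ (roughly $\log_2(2/s)$ doublings) yields a quantitative lower bound
\begin{equation*}
\mu_{x,r}(B(y', s)) = \frac{\mu(B(y,rs))}{\mu(B(x,r))} \geq \CD^{-N} \quad \text{with } N \leq 2 + \log_2(1/s).
\end{equation*}
This lower bound is the main ingredient and explains why $\eta$ must depend on $\CD$: rapid measure decay on small balls is what weakens the estimate.

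Now apply Lemma \ref{t2.4} with $\varepsilon = t/4$: the exceptional set in $(0,1/2)$ has measure at most $t/4$, hence the interval $(t/2, t)$ of length $t/2$ contains some admissible $s \in A$. For such $s$, the conclusion \eqref{2.19} gives
\begin{equation*}
\mu_{x,r}(B(y', s)) \leq 4\varepsilon^{-1}\alpha_d(x,r) = \frac{16\, \alpha_d(x,r)}{t}.
\end{equation*}
Combining with the doubling lower bound at scale $s \geq t/2$, we obtain $\CD^{-2-\log_2(2/t)} \leq 16\, \alpha_d(x,r)/t$, which rearranges to $t^{1+\log_2 \CD} \leq C(\CD)\, \alpha_d(x,r)$, i.e.\ $t \leq C\, \alpha_d(x,r)^{\eta}$ with $\eta = (1+\log_2 \CD)^{-1}$.

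The only real difficulty is the bookkeeping in step with Lemma \ref{t2.4}: one must choose $\varepsilon$ small enough (proportional to $t$) so that the target interval $(t/2, t)$ is not entirely excluded, yet the resulting factor $\varepsilon^{-1} = 4/t$ must be absorbed by the doubling lower bound. The polynomial power $\eta < 1$ is precisely the price paid for having no \textit{a priori} dimensional control on $\mu$; were $\mu$ Ahlfors $d$-regular, the lower bound on $\mu_{x,r}(B(y',s))$ would be $\approx s^d$ and one would recover the exponent $\eta = 1/(d+1)$ from \cite{DS}.
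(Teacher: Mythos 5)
Your argument is correct and is essentially the paper's: both rest on the observation that $\nu_{V_d(x,r)}$ assigns zero mass to a ball centred at $y'$ of radius below $\dist(y',V_d(x,r))$, an upper bound for $\mu_{x,r}$ of that ball of the form $C\alpha_d(x,r)/t$ obtained by testing $\W_1$ against a Lipschitz bump, and a lower bound of the form $\CD^{-C-\log_2(1/t)}$ from iterated doubling at $y\in\Sigma$; solving for $t$ gives the same exponent $\eta=(1+\log_2\CD)^{-1}$ as the paper's normalization $(2\CD)^\eta=2$. The one point to repair is that you invoke Lemma \ref{t2.4} at the centre $y'\in B(0,2/3)$, while that lemma is stated (and proved, via the cutoff $\min(1,2|z-y|)$) only for centres $y\in B(0,1/2)$; either rerun Lemma \ref{t2.3} with a $3$-Lipschitz cutoff at the cost of a constant, or do what the paper does and test $\W_1$ directly with the tent function $\psi(z)=[\tau-|\xi-z|]_+$, which is $1$-Lipschitz and vanishes on $\partial\bB$ whenever $\tau\le 1/3$ and $\xi\in B(0,2/3)$, yielding $\frac{\tau}{2}\,\mu_{x,r}(B(\xi,\tau/2))\le 2\alpha_d(x,r)$ with no exceptional set of radii at all.
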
 

\ms
\begin{proof}  
We just need to prove that for $y \in \Sigma \cap B(x,2r/3)$
\begin{equation} \label{2.27}
r^{-1}\dist(y,W_d(x,r)) \leq C \alpha_d(x,r)^\eta. 
\end{equation}
Let such a $y$ be given, 
set $\xi = r^{-1}(y-x) \in B(0,2/3)$ and
\begin{equation} \label{2.28}
\tau  = \min\{1/3, r^{-1}\dist(y,W_d(x,r)\} = \min\{1/3, \dist(\xi,V_d(x,r)\}
\end{equation}
(because $W_d(x,r) = x + rV_d(x,r)$).
If $y \in W_d(x,r)$, \eqref{2.27} holds trivially. Otherwise, we use the fact that
\begin{equation} \label{2.29}
B(\xi,\tau) \cap V_d(x,r) = \emptyset.
\end{equation}
The function $\psi$ defined on $\R^n$ by
\[
\psi(z) = [\tau- |\xi-z|]_+ = \max\{0, \tau- |\xi-z|\}
\]
is $1$-Lipschitz and vanishes on $\d \bB$ (because $\tau \leq 1/3$), so we can 
apply the definition \eqref{1.5} of $\W_1(\mu_{x,r},\nu_{V_d(x,r)})$ to it.
Because of \eqref{2.29}, there is no contribution of $\nu_{V_d(x,r)}$, and
we get that by \eqref{1.5} and \eqref{2.13}
\begin{eqnarray}  \label{2.30}
{\tau \over 2}\, \mu_{x,r}(B(\xi,\tau/2)) 
&\leq& \int \psi d\mu_{x,r}  = \int \psi d\mu_{x,r}-\int \psi d\mu_{x,r}  \nu_{V_d(x,r)}
\nonumber\\
&\leq& \W_1(\mu_{x,r},\nu_{V_d(x,r)}) \leq 2 \alpha_d(x,r).
\end{eqnarray}
 Let $k \geq 0$ denote the integer such that
$2^{-k} \leq \tau < 2^{-k+1}$. Notice that by \eqref{1.6}
\[
\mu_{x,r}(B(\xi,\tau/2)) = {\mu(x+rB(\xi,\tau/2)) \over \mu(B(x,r))}
= {\mu(B(y, \tau r/2)) \over \mu(B(x,r))}.
\]
 
But $B(x,r) \subset B(y,2r) \subset B(y, 2^{k+2}(\tau r/2))$, so \eqref{1.1} yields
\[
{\mu(B(y, \tau r/2)) \over \mu(B(x,r))} \geq  \CD^{-k-2},
\]
and now \eqref{2.30} says that
\begin{equation} \label{2.31}
2^{-k+1} \CD^{-k-2} \leq \tau \mu_{x,r}(B(\xi,\tau/2)) 
\leq 4\alpha_d(x,r).
\end{equation}
Choose $\eta$ such that
\begin{equation} \label{2.32}
(2\CD)^\eta = 2;
\end{equation}
Then \eqref{2.31} also says that
\[
\tau^{1/\eta} 
\leq 2^{(1-k)/\eta} = (2\CD)^{1-k}  \leq C \alpha_d(x,r),
\]
where $C$ depends on $\CD$.
If $\tau = r^{-1}\dist(y,W_d(x,r))$, we deduce \eqref{2.27} from this.
Otherwise, $\tau = 1/3$, $k=1$, \eqref{2.31} says that $\alpha_d(x,r) \geq C^{-1}$,
and \eqref{2.27} holds just because $\dist(y,W_d(x,r))\leq 2r$.
\end{proof}

\ms
We can also control the distance from points of $W_d(x,r)$ to $\Sigma$
by the same sort of argument.

\begin{lemma} \label{t2.7}
For $0 \leq d \leq n$,  $x\in \Sigma$, and $r > 0$,
\begin{eqnarray} \label{2.33}
\sup_{y\in W_d(x,r) \cap B(x,r/2)} {\dist(y,\Sigma) \over r}
&\leq& 8 \W_1(\mu_{x,r},\nu_{V_d(x,r)})^{1\over d+1} 
\nonumber\\
&\leq& 16 \alpha_d(x,r)^{1\over d+1} .
\end{eqnarray}
\end{lemma}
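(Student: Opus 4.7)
The plan is to show the first inequality by a duality argument that uses a suitable $1$-Lipschitz bump function against which $\mu_{x,r}$ gives nothing while $\nu_{V_d(x,r)}$ gives a controlled amount proportional to $\tau^{d+1}$, where $\tau := \dist(y,\Sigma)/r$. The second inequality is then immediate from \eqref{2.13}.

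More precisely, fix $y \in W_d(x,r) \cap B(x,r/2)$ and set $\xi = r^{-1}(y-x)$. Since $W_d(x,r) = x + r V_d(x,r)$, we have $\xi \in V_d(x,r) \cap B(0,1/2)$. Put $\tau = r^{-1}\dist(y,\Sigma)$. Because $x \in \Sigma$ and $|y-x| \leq r/2$, we already have $\tau \leq 1/2$, and by the definition of $\mu_{x,r}$ in \eqref{1.6}, the support of $\mu_{x,r}$ does not meet $B(\xi,\tau)$, so $\mu_{x,r}(B(\xi,\tau)) = 0$.

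Now consider the test function
\[
\psi(z) = [\tau - |z-\xi|]_+.
\]
This $\psi$ is $1$-Lipschitz, supported in $\overline{B(\xi,\tau)} \subset \bB$ (because $|\xi|+\tau \leq 1$), and in particular vanishes on $\d \bB$, so it is admissible in \eqref{1.5}. Since $\mu_{x,r}$ puts no mass on $B(\xi,\tau)$, $\int \psi \, d\mu_{x,r} = 0$. On the other hand, $\xi \in V_d(x,r)$ and by \eqref{2.15} and the normalization \eqref{1.2}, an integration in polar coordinates on the $d$-plane $V_d(x,r)$ gives
\[
\int \psi \, d\nu_{V_d(x,r)} = c_d(x,r) \int_0^\tau (\tau - s)\, d(s^d)
= c_d(x,r)\, \frac{\tau^{d+1}}{d+1} \geq \frac{\tau^{d+1}}{d+1},
\]
using \eqref{2.14}. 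Hence by the definition \eqref{1.5} of $\W_1$,
\[
\frac{\tau^{d+1}}{d+1} \leq \left| \int \psi\, d\mu_{x,r} - \int \psi\, d\nu_{V_d(x,r)} \right| \leq \W_1(\mu_{x,r},\nu_{V_d(x,r)}).
\]

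Taking $(d+1)$-th roots yields $\tau \leq ((d+1)\W_1)^{1/(d+1)} \leq 8\, \W_1(\mu_{x,r},\nu_{V_d(x,r)})^{1/(d+1)}$ (the constant $8$ is very wasteful but absorbs all $d$), which is the first inequality in \eqref{2.33}. The second inequality follows by invoking \eqref{2.13}, since $\W_1(\mu_{x,r},\nu_{V_d(x,r)}) \leq 2\alpha_d(x,r)$ and $2^{1/(d+1)} \leq 2$. There is no real obstacle here; the only point requiring mild care is checking that $\tau \leq 1/2$ so that $\psi$ is indeed supported in $\bB$, but this is automatic from $x \in \Sigma$ and $y \in B(x,r/2)$.
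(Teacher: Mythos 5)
Your proof is correct and follows essentially the same route as the paper: both test the definition of $\W_1$ against the tent function $\psi(z)=[\tau-|z-\xi|]_+$, observe that $\mu_{x,r}$ contributes nothing while $\nu_{V_d(x,r)}$ contributes at least a constant times $\tau^{d+1}$, and conclude. Your version is in fact slightly cleaner, since computing $\int\psi\,d\nu_{V_d(x,r)}$ exactly and noting that $\tau<1/2$ is automatic (because $x\in\Sigma$ and $y\in B(x,r/2)$) lets you avoid the paper's case distinction on the size of the supremum.
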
 

\ms
\begin{proof}  
Let $\tau \in (0,1/2]$ and $y\in W_d(x,r) \cap B(x,r/2)$
be given. Set $\xi = r^{-1}(y-x)$ and define a Lipschitz function $\psi$ by
\[
\psi(z) = \max\{0, \tau- |z-\xi|\}\ \text{ for } z \in \R^n;
\]
notice that $\psi$ is $1$-Lipschitz and vanishes on $\d\bB$, 
because $\xi \in B(0,1/2)$ and $\tau \leq 1/2$, so \eqref{1.5} yields
\[
\int \psi d\nu_{V_d(x,r)} - \int \psi d \mu_{x,r} \leq A
\hbox{ with }
A = \W_1(\mu_{x,r},\nu_{V_d(x,r)}) \leq 2 \alpha_d(x,r)
\]
by \eqref{2.13}. If $\dist(y,\Sigma) \geq \tau r$,
$B(\xi,\tau)$ does not meet the support of $\mu_{x,r}$, hence
$\int \psi d \mu_{x,r} = 0$, while on the other side
\[
\int \psi d\nu_{V_d(x,r)} \geq {\tau \over 2}\, \nu_{V_d(x,r)}(B(\xi,\tau/2))
\geq {\tau \over 2} \H^d(B(\xi,\tau/2) \cap V_d(x,r))
\geq (\tau/2)^{d+1}
\]
by \eqref{1.7}, \eqref{1.8}, our convention \eqref{1.2}, and because 
$\xi \in V_d(x,r)$.
Thus we get that 
\[
\tau \leq  2A^{1 \over d+1} \leq 4 \alpha_d(x,r)^{1 \over d+1}.
\] 
Now let $\delta$ denote the supremum
in \eqref{2.33}. If $\delta \leq 1/2$, let $\tau = \delta$ in the argument above. If $\dist(y,\Sigma) \geq \delta r$ then
$\delta \leq 2A^{1 \over d+1}$, which implies \eqref{2.33}.
Otherwise, let $\tau = 1/2$ then $\dist(y,\Sigma) \geq \frac{r}{2}$ which yields $A^{1 \over d+1} \geq 1/4$. Thus
$\delta \leq 2 \leq 8 A^{1\over d+1}$; and Lemma \ref{t2.7} follows.
\end{proof}

\begin{lemma} \label{t2.8}
For $a \in (0,1)$, there exists $C_a \geq 0$ depending 
 on $a$, $\CD \,$, and $d$, with the following property. 
Let $0 \leq d \leq n$,  $x,y\in \Sigma$, and $r,t > 0$
be such that
\begin{equation} \label{2.34}
ar \leq t \leq r  \ \text{ and } \  \ |x-y|+{t \over 2} < r
\end{equation}
Then 
\begin{equation} \label{2.35}
\dist(z,W_d(x,r)) \leq C_a (\alpha_d(x,r) + \alpha_d(y,t)) r
\ \text{ for } z \in W_d(y,t) \cap B(x,2r)
\end{equation}
and
\begin{equation} \label{2.36}
\dist(z,W_d(y,t)) \leq C_a (\alpha_d(x,r) + \alpha_d(y,t)) r
\ \text{ for } z \in W_d(x,r) \cap B(x,2r).
\end{equation}
\end{lemma}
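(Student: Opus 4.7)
The plan is to produce $d+1$ points $w_0, \ldots, w_d$ in $B(y, t/8)$ whose orthogonal projections onto $W_d(y, t)$ form a thick simplex at scale $\sim t$, and that lie within distance $C_a(\alpha_d(x, r) + \alpha_d(y, t)) r$ of $W_d(x, r)$. Then for any $z \in W_d(y, t) \cap B(x, 2r)$, the barycentric representation $z = \sum \lambda_i w_i^{\ast}$ (with $w_i^{\ast}$ the projection of $w_i$ onto $W_d(y, t)$) has $|\lambda_i| \leq C_a$, since the simplex has diameter $\sim t \geq ar$ and $|z - x| \leq 2r$; the triangle inequality then yields $\dist(z, W_d(x, r)) \leq C_a (\alpha_d(x, r) + \alpha_d(y, t)) r$. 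The hypothesis $|x-y| + t/2 < r$ together with $t \geq ar$ guarantees $B(y, t/8) \subset B(x, r)$ with margin $\gtrsim a r$, so this common ball can serve both planes.

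The main ingredient is a variant of Lemma~\ref{t2.5} giving the \emph{linear-in-$\alpha$} bound
\begin{equation*}
\fint_{B(y, t/8)} \dist(w, W_d(x, r)) \, d\mu(w) \leq C_{a, \CD} \, \alpha_d(x, r) \, r.
\end{equation*}
I follow the proof of Lemma~\ref{t2.5} but replace the cutoff $\phi$ (supported in $\bB$ and equal to $1$ on $B(0, 1/2)$) by one supported in the scaled ball $B((y-x)/r, \, t/(4r))$, which by the calculation $|(y-x)/r| + t/(4r) < 1 - a/4$ sits strictly inside $\bB$. This cutoff can be chosen to be $O(1/a)$-Lipschitz, so the test function $\psi(z) = \dist(z, V_d(x, r)) \phi(z)$ is $O(1/a)$-Lipschitz, vanishes on $V_d(x, r)$ and on $\partial \bB$, and the definition \eqref{1.5} combined with doubling (to compare $\mu(B(x, r))$ and $\mu(B(y, t/8))$) gives the displayed bound. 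An analogous estimate $\fint_{B(y, t/8)} \dist(w, W_d(y, t)) \, d\mu(w) \leq C_{\CD} \, \alpha_d(y, t) \, t$ follows by the same argument at scale $(y, t/4)$.

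By Markov there is a set $G \subset B(y, t/8)$ with $\mu(G) \geq (1 - \tau) \mu(B(y, t/8))$ on which $\dist(w, W_d(x, r)) + \dist(w, W_d(y, t)) \leq C_{a, \tau}(\alpha_d(x, r) + \alpha_d(y, t)) r$. To extract a thick simplex from $G$, pick an orthonormal frame $\{e_1, \ldots, e_d\}$ of the direction of $V_d(y, t)$ and set $\tilde q_0 = 0$, $\tilde q_i = e_i / 16$ in $(y, t)$-scaled coordinates. Lemma~\ref{t2.4} applied at $(y, t)$ with a fixed $\varepsilon$ shows that each ball $B(\tilde q_i, 1/64)$ carries $\mu_{y, t}$-mass at least $c_d > 0$, provided $\alpha_d(y, t) \leq \alpha_0(d)$ (otherwise the bound is trivial since $\dist \leq 2r$). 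Choosing $\tau < c_d$, the set $G$ meets each ball $y + t B(\tilde q_i, 1/64)$, producing $w_i \in G$ with $|w_i - q_i| \leq t/64$ for $q_i = y + t \tilde q_i \in W_d(y, t)$. Projecting to $W_d(y, t)$ gives $w_i^{\ast}$ still forming a $\sim t$-thick simplex, and the interpolation argument above proves \eqref{2.35}. The bound \eqref{2.36} follows by the symmetric construction (building the simplex in $W_d(x, r) \cap B(y, t/8)$ with Lemma~\ref{t2.4} at $(x, r)$). The main obstacle is keeping the dependence on $\alpha$ linear: using Lemma~\ref{t2.6} or~\ref{t2.7} directly would yield only sublinear powers, so one must use the $L^1$-style integral bounds of (the variant of) Lemma~\ref{t2.5}, whose linearity in $\alpha$ survives because the simplex width is produced at unit scale in the rescaled picture by Lemma~\ref{t2.4}, whose constants are independent of $\alpha$.
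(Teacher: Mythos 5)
Your proof is correct in substance, and its second half takes a genuinely different route from the paper's. Both arguments share the crucial first step: testing $\W_1$ with a function of the form $\phi(z)\dist(z,V_d(x,r))$, where $\phi$ is a cutoff localized near $y$ at scale $t$, which vanishes identically against $\nu_{V_d(x,r)}$ and therefore yields an $L^1$ bound on $\dist(\cdot\,,W_d(x,r))$ that is \emph{linear} in $\alpha_d(x,r)$ (your remark that Lemmas \ref{t2.6} and \ref{t2.7} would only give sublinear powers is exactly the right diagnosis). Where you diverge: the paper tests the same $\psi$ against the pair $(\mu_{y,t},\nu_{V_d(y,t)})$ as well, thereby converting the smallness of $\int\psi\,d\mu$ into smallness of $\int\psi\,d\nu_{V_d(y,t)}$, i.e.\ into an $L^1(\H^d)$ average of $\dist(\cdot\,,W_d(x,r))$ over $W_d(y,t)\cap B(y,t/4)$; it then invokes the elementary growth estimate for affine functions with small average on a ball of a $d$-plane (Lemma \ref{t2.9}, with $\lambda=16a^{-1}$). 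You instead keep the average against $\mu$, run Chebyshev to get a large good set $G$, and use the mass lower bounds of Lemma \ref{t2.4} to guarantee that $G$ meets $d+1$ balls centered at a $\sim t$-separated simplex in $W_d(y,t)$; affine interpolation from the simplex vertices then replaces Lemma \ref{t2.9}. The two are comparable in length; the paper's version avoids the simplex-selection step (it only needs one ball centered on each plane inside $B(y,t/4)$, via Lemma \ref{t2.6}), while yours only ever integrates against $\mu$ and so avoids the second application of the Wasserstein bound and the bookkeeping in \eqref{2.40A}--\eqref{2.43}.

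Two small points to tighten. First, you set $\tilde q_0=0$, but $V_d(y,t)$ is only guaranteed to meet $B(0,1/2)$, so a priori there need be no point of $V_d(y,t)$ near the origin and the balls $B(\tilde q_i,1/64)$ need not sit inside $B(0,1/8)$, where $G$ lives. The fix is the same reduction you already invoke for the mass bound: assume $\alpha_d(y,t)\leq\alpha_0$ (else \eqref{2.35}--\eqref{2.36} are trivial since all distances are at most $4r$), apply Lemma \ref{t2.6} to conclude $\dist(0,V_d(y,t))\leq C\alpha_0^\eta\leq 10^{-3}$, and take $\tilde q_0$ to be the projection of $0$ onto $V_d(y,t)$. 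Second, for \eqref{2.36} the symmetric construction places the simplex in $W_d(x,r)\cap B(y,t/8)$, a ball of radius comparable to $ar$ at the $(x,r)$ scale; the threshold in Lemma \ref{t2.4} and the barycentric coefficients then degrade like powers of $a$, so the threshold $\alpha_0$ and the final constant must be allowed to depend on $a$ — which is consistent with the statement, but should be said.
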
 

\ms
\begin{proof}
At the same time we test $W_d(x,r)$ in $B(x,r)$ and 
$W_d(y,t)$ in $B(y,t)$, with corresponding functions $\psi_x$
and $\psi_y$. First define a cut-off function $\varphi$ by
\begin{equation} \label{2.37}
\phi(z) = [t - 2|z-y|]_+ = \max\{0,t - 2|z-y|\};
\end{equation}
notice that $\phi$ is $2$-Lipschitz and bounded by $t \leq r$. Moreover
if
$\phi(z) \neq 0$, then $z \in B(y,t/2) \subset B(x,r)$
(by \eqref{2.34}). That is, $\phi = 0$ vanishes outside of $ B(y,t/2)$.
It is $2$-Lipschitz and bounded by $t \leq r$.
We define $\psi$ by
\begin{equation} \label{2.38}
\psi(z) = \phi(z) \dist(z,W_d(x,r)),
\end{equation}
which is now $5r$-Lipschitz, and its equivalents $\psi_x$ and $\psi_z$ defined by
\[
\psi_x(w) = \psi(x+rw)  \ \text{ and }\ \psi_y(w) = \psi(y+tw);
\]
Since $\psi_x$ is $5r^2$-Lipschitz and vanishes on $\R^n \sm \bB$, 
\eqref{1.5} yields
\begin{equation} \label{2.39}
 \av{\int\psi_x d\mu_{x,r}-\int\psi_x d\nu_{V_d(x,r)}}
\leq 5 r^2 \W_{1}(\mu_{x,r},\nu_{V_d(x,r)}) \leq 10 r^2 \alpha_d(x,r)
\end{equation}
by \eqref{2.13}.
Since $\psi_y$ is $5r^2$-Lipschitz and vanishes on $\R^n \sm B(0,1/2)$, then
\begin{equation} \label{2.40}
 \av{\int\psi_y d\mu_{y,t}-\int\psi_y d\nu_{V_d(y,t)}}
\leq 5 r^2 \W_{1}(\mu_{y,t},\nu_{V_d(y,t)}) \leq 10 r^2 \alpha_d(y,t).
\end{equation}
We now use the special form of $\psi$, which makes it vanish on
$W_d(x,r)$; thus $\psi_x$ vanishes on $V_d(x,r)$, the corresponding
term of \eqref{2.39} disappears, and we are left with
\[
\int\psi_x d\mu_{x,r} \leq 10 r^2 \alpha_d(x,r).
\]
Using \eqref{1.6} and a change of variable we have
\[  
\begin{aligned}
\int\psi_x(z) d\mu_{x,r}(z) &= \int_{\bB} \psi(x+rw) d\mu_{x,r}(z)
= \fint_{B(x,r)} \psi(u) d\mu(u)
\\
&= \mu(B(x,r))^{-1} \int_{B(x,r)} \psi(u) d\mu(u)
\\
& = \mu(B(x,r))^{-1} \int \psi(u) d\mu(u)
 \end{aligned}
\]
A similar computation yields
\[
\int\psi_y(z) d\mu_{y,t}(z) =\mu(B(y,t))^{-1} \int \psi(u) d\mu(u).
\]
Therefore
\begin{equation}\label{2.40A}
\int\psi_x(z) d\mu_{x,r}(z)= {\mu(B(y,t))\over \mu(B(x,r))} \int\psi_y d\mu_{y,t}
\end{equation}
By \eqref{2.34}, the fact that $B(x,r) \subset B(y,2r) \subset B(y,2a^{-1}t)$ and 
multiple applications of \eqref{1.1} yield
\begin{equation} \label{2.41}
\mu(B(x,r)) \leq C(a) \mu(B(y,t)),
\end{equation}
where $C(a)$ depends on $a$ and $\CD$. Thus \eqref{2.40A} and \eqref{2.41} ensure that 
\[
\int\psi_y d\mu_{y,t} \leq C(a) \int\psi_x d\mu_{x,r}
\leq 10 C(a) r^2 \alpha_d(x,r),
\]
and \eqref{2.40} yields
\begin{eqnarray} \label{2.42}
\int\psi_y d\nu_{V_d(y,t)} 
&\leq&  \int\psi_y d\mu_{y,t}+ 10 r^2 \alpha_d(y,t)
\nonumber\\
&\leq& 10 C(a) r^2 \alpha_d(x,r) + 10 r^2 \alpha_d(y,t).
\end{eqnarray}
On the other hand by \eqref{2.37} and \eqref{2.38}
\[
\begin{aligned}
\int\psi_y d\nu_{V_d(y,t)} 
&= c_d(y,t) \int_{\bB \cap V_d(y,t)} \psi(y+tz) d\H^d(z)
\\
&= c_d(y,t) t^{-d} \int_{B(y,t) \cap W_d(y,t)} \psi(u) d\H^d(u)
\\
&\geq c_d(y,t) t^{-d} \int_{B(y,t/4) \cap W_d(y,t)} 
{t \over 2} \dist(u, W_d(x,r)) d\H^d(u).
\end{aligned}
\]
We compare with \eqref{2.42}, use the fact that $c_d(y,t) \geq 1$, and \eqref{2.34} to obtain that 
\begin{equation} \label{2.43}
t^{-d} \int_{B(y,t/4) \cap W_d(y,t)} \dist(u, W_d(x,r)) d\H^d(u)
\leq C r (\alpha_d(x,r) + \alpha_d(y,t)),
\end{equation}
where here and for the rest of the lemma, $C$ denotes a constant that
may depend on $d$, $\CD$, and $a$.
Now we can also run the same estimate with the function $\psi$ defined by
\begin{equation} \label{2.44}
\psi(z) = \phi(z) \dist(z,W_d(y,t))
\end{equation}
(instead of \eqref{2.38}). We notice that \eqref{2.39} and \eqref{2.40} are
still valid, that the second term in \eqref{2.40} vanishes, get an estimate
on $\int\psi_y d\mu_{y,t}$, transform it into an estimate on
$\int\psi_x d\mu_{x,r}$, and plug it back in \eqref{2.39}.
Note that \eqref{2.41} holds because $B(y,t) \subset B(x,2r)$ 
and so \eqref{1.1} yields $\mu(B(y,t)) \leq \CD \mu(B(x,r))$.
As above we get an estimate for
$\int\psi_x d\nu_{V_d(x,r)}$, which yields, as in \eqref{2.43} that
\begin{equation} \label{2.45}
r^{-d} \int_{B(y,t/4) \cap W_d(x,r)} \dist(u, W_d(y,t)) d\H^d(u)
\leq C r (\alpha_d(x,r) + \alpha_d(y,t)).
\end{equation}

We need to check that the two integrals, in \eqref{2.43}
and \eqref{2.45} really concern significant pieces of $W_d(y,t)$
and $W_d(x,r)$ respectively. We check this only when
\begin{equation} \label{2.46}
\alpha_d(x,r) + \alpha_d(y,t) < c
\end{equation}
where the small constant $c$ depends on $d$ and $\CD$
and will be chosen soon. Otherwise, \eqref{2.35} and \eqref{2.36} are trivial, because all the distances written there there are less than $4r$.

If $c$ is small enough, Lemma \ref{t2.6} (applied to $B(y,t)$) implies that 
\[
\dist(y, W_d(y,t)) \leq C \alpha_d(y,t)^\eta t \leq t/8,
\]
and then $B(y,t/4)$ contains a ball $B_1$ of radius $t/8$ centered on $W_d(y,t)$. 
Similarly, we may apply Lemma \ref{t2.6} to $B(x,r)$ and get that
\[
\dist(y, W_d(x,r)) \leq C \alpha_d(x,r)^\eta r \leq t/8,
\]
because $y \in \Sigma \cap B(x,r/2)$ and if $c$ is small enough, depending on
$a$ as well. Then again $B(y,t/4)$ contains a ball $B_2$ of radius $t/8$ 
centered on $W_d(x,r)$. To conclude the proof of Lemma \ref{t2.8} we need the following simple lemma.

\begin{lemma} \label{t2.9}
For each integer $d$, there is a constant $C(d)  > 0$ such that
if $W$ is an affine space of dimension $d$, $A$ is an affine function
on $\R^n$, $B$ is a ball centered on $W$, and
\begin{equation} \label{2.47}
\fint_{B\cap W} |A(u)| d\H^d(u) \leq 1, 
\end{equation}
then for $\lambda > 1$,
\begin{equation} \label{2.48}
\sup\big\{ |A(u)| \, ; \, u \in \lambda B \cap W\big\} \leq C(d) \lambda.
\end{equation}
\end{lemma}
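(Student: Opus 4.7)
The plan is to strip away the abstract generality (arbitrary affine subspace $W$, arbitrary ball $B$) and reduce the statement to a standard equivalence-of-norms fact on the finite-dimensional space of affine functions on a Euclidean ball.

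First I would choose an affine isometry $\iota : \R^d \to W$ that identifies Lebesgue measure on $\R^d$ with $\H^d \vert_W$ (compatible with the normalization \eqref{1.2}). Under $\iota^{-1}$, the ball $B \cap W$ pulls back to a Euclidean ball $B' = B(x_0,r) \subset \R^d$, while $\lambda B \cap W$ pulls back to $B(x_0, \lambda r)$, and $A \circ \iota$ is an affine function on $\R^d$. A further translation by $-x_0$ and scaling by $r^{-1}$ reduces the problem to the case of an affine function $\tilde A : \R^d \to \R$ satisfying $\fint_{B(0,1)} |\tilde A(y)|\, dy \leq 1$, with the task of bounding $\sup_{B(0,\lambda)} |\tilde A|$ by $C(d) \lambda$.

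Next, I would invoke the fact that the vector space $\cA$ of affine functions on $\R^d$ has dimension $d+1$, so all norms on $\cA$ are equivalent with constants depending only on $d$. Both $\tilde A \mapsto \fint_{B(0,1)} |\tilde A|$ and $\tilde A \mapsto \sup_{B(0,1)} |\tilde A|$ are genuine norms on $\cA$ (each vanishes only at $\tilde A \equiv 0$), so there exists a constant $C_0(d) > 0$ with $\sup_{B(0,1)} |\tilde A| \leq C_0(d) \fint_{B(0,1)} |\tilde A| \leq C_0(d)$.

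Finally, to pass from the unit ball to the $\lambda$-dilate I would exploit the affine-scaling identity $\tilde A(\lambda v) = \lambda \tilde A(v) + (1-\lambda)\tilde A(0)$, valid for all $v \in \R^d$ since $\tilde A$ is affine. For $v \in B(0,1)$ both $|\tilde A(v)|$ and $|\tilde A(0)|$ are bounded by $\sup_{B(0,1)} |\tilde A| \leq C_0(d)$, hence $|\tilde A(\lambda v)| \leq (\lambda + (\lambda-1)) C_0(d) \leq 2\lambda C_0(d)$ for $\lambda > 1$, which yields \eqref{2.48} with $C(d) = 2 C_0(d)$. There is no real obstacle here; the whole argument amounts to dimensional analysis plus the fact that all norms on a finite-dimensional space are equivalent.
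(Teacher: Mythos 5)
Your proof is correct, and it takes a somewhat different route from the paper's. Both arguments begin with the same normalization (reduce to $W=\R^d$ and $B=B(0,1)$), but where you then invoke the equivalence of all norms on the $(d+1)$-dimensional space of affine functions to pass from the $L^1$ average to the sup on the unit ball, the paper argues concretely: it writes $A(u)=\alpha u_1+\beta$ after a rotation, bounds $|\beta|\leq 2$ by integrating over the half-ball on which $\alpha u_1$ and $\beta$ have the same sign (so that $|A|\geq|\beta|$ there), deduces $\fint_{B\cap W}|\alpha u_1|\leq 3$ and hence $|\alpha|\leq C(d)$, and then the linear growth on $\lambda B$ is immediate from $|A(u)|\leq|\alpha|\lambda+|\beta|$. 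Your final step, the identity $\tilde A(\lambda v)=\lambda\tilde A(v)+(1-\lambda)\tilde A(0)$, is just a coordinate-free restatement of that last inequality, so the only real divergence is in the middle: your compactness/norm-equivalence argument is shorter and generalizes verbatim to polynomials of any fixed degree, at the cost of a non-explicit constant, while the paper's coefficient estimate is fully self-contained and yields an explicit $C(d)$. Both are complete proofs.
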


\ms
\begin{proof} 
We may assume that $W = \R^d$, $B= B(0,1)$, and that on $W$,
$A(u) = \alpha u_1 + \beta$ for some choice of $\alpha$, $\beta \in \R$,
and where $u_1$ is the first coordinate of $u$.
Then $|\beta| \leq 2$ (integrate on the half space where $\alpha u_1$ and $\beta$
have the same sign, so that $|A(u)| \geq \beta$). Thus
$\fint_{B\cap W} |\alpha u_1| \leq 3$, $\alpha \leq C(d)$, and the
lemma follows.
\end{proof}

We apply the lemma to $B_1$, the affine space $W_d(y,t)$,
a multiple of the function $\dist(u, W_d(x,r))$
(which is indeed of the form $|A(u)|$), and $\lambda = 16 a^{-1}$
(to make sure that $B(x,2r) \subset \lambda B_1$), and we deduce from
\eqref{2.43} that \eqref{2.35} holds.
Similarly, \eqref{2.36} follows from \eqref{2.45} and Lemma \ref{t2.9},
applied to $B_2$ and a multiple of $\dist(u, W_d(y,t))$.
Lemma \ref{t2.8} follows.
\end{proof}

\subsection{Evaluation of the density at the good points}
\label{goodclients}

It is important to understand how the density ratios for $y\in \Sigma$ and $t > 0$
\begin{equation} \label{2.49}
\theta_d(y,t) = t^{-d} \mu(B(y,t)),
\end{equation}
vary. In this section we 
fix $d \geq 0$, $x\in \Sigma$, and $r > 0$, and find a large good set
$\cG = \cG_d(x,r)$ of pairs $(y,t)$ (the good points), on which 
$\theta_d(y,t)$ is nearly constant.

We work with three small constants here:
a small  $\kappa \in (0,1/2)$, to be chosen later, a smaller
$\varepsilon \in (0,\kappa)$, that will depend on $\kappa$, and 
$\alpha_0 > 0$, that will depend on $\kappa$ and $\varepsilon$.
We assume that
\begin{equation} \label{2.50}
\alpha_d(x,r) \leq \alpha_0.
\end{equation}
The constant $\alpha_0$ works as a threshold; that is, \eqref{2.50}
allows us to estimate things easily. In general  we expect $\alpha_d(x,r)$
to be often much smaller than $\alpha_0$, and the more precise estimates
require $\alpha_d(x,r)$.

We first define a good set of points $y$, by
\begin{equation} \label{2.51}
G_\Sigma = \big\{ y\in \Sigma \cap B(x,r/2) \, ; \, 
r^{-1} \dist(y,W_d(x,r)) \leq \varepsilon^{-1} \alpha_d(x,r) \big\}.
\end{equation}
Then, by Chebyshev and \eqref{2.23},
\begin{eqnarray} \label{2.52}
\mu(B(x,r/2) \sm G_\Sigma) 
&\leq& \varepsilon \alpha_d(x,r)^{-1} 
\int_{\Sigma \cap B(x,r/2)}  {\dist(y,W_d(x,r)) \over r}
\nonumber\\
&\leq& 32 \CD \varepsilon \mu(B(x,r/2)).
\end{eqnarray}
Then let $\kappa > 0$ 
be another small constant, and for each
$y\in G_\Sigma$, we set $\xi_y = r^{-1}(y-x)$ and
\begin{equation} \label{2.53}
G_\kappa(y) = \big\{ s \in (\kappa, 1/2) \, ; \, 
\big| \mu_{x,r}(B(\xi_y,s)) - \nu_{V_d(x,r)}(B(\xi_y,s))\big|
\leq 4 \varepsilon^{-1}\alpha_d(x,r) \big\}.
\end{equation}

By \eqref{2.19}, $G_\kappa(y)$ contains $A\cap (\kappa, 1/2)$, where
$A$ is the set of Lemma \ref{t2.5}, and so \eqref{2.18} says that
\begin{equation} \label{2.54}
|(\kappa, 1/2) \sm G_\kappa(y)| \leq \varepsilon.
\end{equation}
Finally, our set of good points is
\begin{equation} \label{2.55}
\cG = \cG_d(x,r) = \big\{ (y,t) \in \Sigma \times (0,r/2) \, ; \,
y\in G_\Sigma \text{ and } r^{-1}t \in G_\kappa(y) \big\}.
\end{equation}

\begin{lemma} \label{t2.10}
There is a constant $C \geq 0$, that depends only on $d$, such that
if \eqref{2.50} holds and $\alpha_0$ is chosen small enough, 
depending on $n$, $d$, $\varepsilon$, and $\kappa$, then for $(y,t) \in \cG_d(x,r)$
\begin{equation} \label{2.56}
\Big| {\theta_d(y,t) r^d \over \mu(B(x,r))} - c_d(x,r)  \Big|
\leq C \kappa^{-d} \varepsilon^{-1} \alpha_d(x,r).
\end{equation}
\end{lemma}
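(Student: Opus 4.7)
The plan is to use the change of variables $y = x+r\xi_y$, $t = rs$ to reduce the left-hand side of \eqref{2.56} to an estimate on $\mu_{x,r}$ applied to a small ball, and then exploit the two halves of the definition of $\cG_d(x,r)$ in turn: the $G_\kappa(y)$ condition to replace $\mu_{x,r}$ by $\nu_{V_d(x,r)}$, and the $G_\Sigma$ condition to evaluate the latter explicitly. Concretely, with $\xi_y = r^{-1}(y-x) \in B(0,1/2)$ and $s = t/r \in (\kappa,1/2)$, the definition \eqref{1.6} of $\mu_{x,r}$ gives the identity
$$\frac{\theta_d(y,t)r^d}{\mu(B(x,r))} = s^{-d}\mu_{x,r}(B(\xi_y,s)).$$
Since $B(\xi_y,s)\subset\bB$ and $s\in G_\kappa(y)$, the defining inequality \eqref{2.53} yields
$$\bigl|s^{-d}\mu_{x,r}(B(\xi_y,s)) - s^{-d}\nu_{V_d(x,r)}(B(\xi_y,s))\bigr| \leq 4s^{-d}\varepsilon^{-1}\alpha_d(x,r) \leq 4\kappa^{-d}\varepsilon^{-1}\alpha_d(x,r),$$
which has exactly the form of the right-hand side of \eqref{2.56}.

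Next I would compute $\nu_V(B(\xi_y,s))$ for $V = V_d(x,r)$. Letting $\xi_y'$ denote the orthogonal projection of $\xi_y$ onto $V$ and $\delta = |\xi_y-\xi_y'|$, the condition $y\in G_\Sigma$ from \eqref{2.51} amounts to $\delta = r^{-1}\dist(y,W_d(x,r)) \leq \varepsilon^{-1}\alpha_d(x,r)$. If $\alpha_0$ is chosen small enough (depending on $\kappa$ and $\varepsilon$) to force $\delta \leq s/2$, the slice $V\cap B(\xi_y,s)$ is a full $d$-disk in $V$ of radius $\sqrt{s^2-\delta^2}$, so the normalization \eqref{1.2} and \eqref{2.15} give
$$\nu_V(B(\xi_y,s)) = c_d(x,r)\,\H^d(V\cap B(\xi_y,s)) = c_d(x,r)\, s^d\bigl(1-\delta^2/s^2\bigr)^{d/2}.$$
A first-order Taylor expansion, together with $c_d(x,r)\leq 2^d$ from \eqref{1.8}, then bounds
$$\bigl|s^{-d}\nu_V(B(\xi_y,s)) - c_d(x,r)\bigr| \leq C(d)\,c_d(x,r)\,\frac{\delta^2}{s^2} \leq C(d)\,\kappa^{-2}\varepsilon^{-2}\alpha_d(x,r)^2.$$

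Summing the two contributions via the triangle inequality gives a total error
$$4\kappa^{-d}\varepsilon^{-1}\alpha_d(x,r) + C(d)\,\kappa^{-2}\varepsilon^{-2}\alpha_d(x,r)^2,$$
and it only remains to absorb the quadratic term into the linear one. This is the sole place where the threshold role of $\alpha_0$ enters: requiring, e.g., $\alpha_0 \leq \kappa^2\varepsilon$ simultaneously guarantees $\delta\leq s/2$ (so that the Hausdorff computation is valid) and forces $\kappa^{2-d}\varepsilon^{-1}\alpha_d(x,r) \leq 1$, which makes the quadratic term dominated by the linear one up to a factor depending only on $d$. There is no serious obstacle here --- the proof is really a bookkeeping exercise --- and the mild subtlety to keep in mind is that the $s^{-d}$ multiplier is the only origin of the $\kappa^{-d}$ in the conclusion, so one must apply the good-set bound at scale $s$, not average over $s$.
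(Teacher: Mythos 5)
Your proof is correct and follows essentially the same route as the paper's: the identity $\theta_d(y,t)r^d/\mu(B(x,r)) = s^{-d}\mu_{x,r}(B(\xi_y,s))$, the good-set inequality \eqref{2.53} to pass to $\nu_{V_d(x,r)}$, and then an evaluation of $\nu_{V_d(x,r)}(B(\xi_y,s))$ using $y\in G_\Sigma$. The only (harmless) difference is that you compute the slice $V\cap B(\xi_y,s)$ exactly as a disk of radius $\sqrt{s^2-\delta^2}$, giving a quadratic error, whereas the paper sandwiches $B(\xi_y,s)$ between balls of radii $s\pm\tau$ centered on the plane and gets a linear error $Cs^{d-1}\tau$; both are absorbed into the stated bound (your absorption condition should read $\kappa^{d-2}\varepsilon^{-1}\alpha_d(x,r)\leq C$, but this is covered by taking $\alpha_0$ small).
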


\ms
It should be noted that the quantities $c_d(x,r)$ and
$\mu(B(x,r))$ may not be known as precisely as \eqref{2.56}
would suggest, typically because the center $x$ may be a little bit too
far from $W_d(x,r)$. Nonetheless \eqref{2.56} says that their product is rather stable.

\ms
\begin{proof} 
Let $(y,t) \in \cG$ be given. By \eqref{2.51}, we can find 
$w \in W_d(x,r)$ such that $|y-w|\leq \varepsilon^{-1} \alpha_d(x,r) r$.
Set $\xi_y = r^{-1}(y-x)$ (as above) and $\xi_w = r^{-1}(w-x)$; then 
\begin{equation} \label{2.57}
|\xi_y - \xi_w| = r^{-1}|y-w| \leq \varepsilon^{-1} \alpha_d(x,r).
\end{equation}
Also set $s = r^{-1}t$, and notice that $s \in G_\kappa(y)$
because $(y,t) \in \cG$.
Set $\tau = \varepsilon^{-1} \alpha_d(x,r)$; if $\alpha_0$ is small enough 
(depending on $\kappa$ and $\varepsilon$),
then $\tau = \varepsilon^{-1} \alpha_d(x,r) \leq \varepsilon^{-1} \varepsilon_0 
\leq 10^{-1} s$ because $s \geq \kappa$. Then by \eqref{2.57},
\[
B(\xi_w, s-\tau) \subset B(\xi_y,s) \subset B(\xi_w,s+\tau).
\]
By \eqref{2.15} and our normalization \eqref{1.2},
\[
\nu_{V_d(x,r)}(B(\xi_w,s-\tau)) = c_d(x,r) \H^d(V_d(x,r)) B(\xi_y,s-\tau))
= c_d(x,r) (s-\tau)^{d}
\]
and similarly for $B(\xi_w,s+\tau)$; hence
\begin{equation} \label{2.58}
c_d(x,r) (s-\tau)^{d} \leq \nu_{V_d(x,r)}(B(\xi_y,s))
\leq c_d(x,r) (s+\tau)^{d}.
\end{equation}
Recall from \eqref{2.14} that
\begin{equation} \label{2.59}
1 \leq c_d(x,r) \leq 2^d.
\end{equation}
Then \eqref{2.58} implies that using the fact that $\tau\le 10^{-1}s$ and $\tau=\varepsilon^{-1}\alpha_d(x,r)$
\begin{eqnarray} \label{2.60}
| \nu_{V_d(x,r)}(B(\xi_y,s)) - c_d(x,r) s^d|
&\leq& c_d(x,r) s^d \Big(\big({s+\tau \over s}\big)^d - 1\Big)
\leq C s^d \, {\tau \over s} 
\nonumber \\
&=& C s^{d-1} \varepsilon^{-1} \alpha_d(x,r).
\end{eqnarray}
Adding this to the defining inequality in \eqref{2.53}, 
which holds because $s \in G_\kappa(y)$, and get that
\begin{equation} \label{2.61}
\big| \mu_{x,r}(B(\xi_y,s)) - c_d(x,r) s^d \big|
\leq 4 \varepsilon^{-1}\alpha(x,r) + C s^{d-1} \varepsilon^{-1} \alpha_d(x,r).
\end{equation}
By \eqref{2.49} and \eqref{1.6}, and because $y = x+r \xi_y$ and $t=rs$,
\[
\theta_d(y,t) = t^{-d} \mu(B(y,t)) = t^{-d} \mu(B(x,r)) \mu_{x,r}(B(\xi_y,s)).
\]
Then \eqref{2.61} yields 
\begin{eqnarray} \label{2.62}
\Big| {\theta_d(y,t) r^d \over \mu(B(x,r))} - c_d(x,r)  \Big|
&=& \Big| r^d t^{-d}\mu_{x,r}(B(\xi_y,s)) - c_d(x,r)  \Big|
\nonumber \\
&=& \Big| s^{-d}\mu_{x,r}(B(\xi_y,s)) - c_d(x,r)  \Big|\nonumber\\
&\leq& 4 \varepsilon^{-1} s^{-d} \alpha_d(x,r) + C s^{-1} \varepsilon^{-1} \alpha_d(x,r)
\nonumber \\
&\leq& C \kappa^{-d} \varepsilon^{-1} \alpha_d(x,r)
\end{eqnarray}
This proves \eqref{2.56}, and Lemma \ref{t2.10} follows.
\end{proof}

\ms
Lemma \ref{t2.10} suggests to use the stabilized density ratios
\begin{equation} \label{2.63}
\theta_d^\ast(x,r) = r^{-d} c_d(x,r) \mu(B(x,r))
\end{equation}
for $x\in \Sigma$ and $r > 0$, where we expect that the 
slightly wilder variations of $c_d(x,r)$ and $\mu(B(x,r))$ will compensate each
other. The next results uses this.

\begin{corollary} \label{t2.11}
There is a constant $C \geq 0$, which depends only on $d$ and $\CD$,
such that
\begin{equation} \label{2.64}
\Big| \log\Big({\theta_d^\ast(x,r)\over \theta_d^\ast(x,\rho)}\Big)\Big|
\leq C (\alpha_d(x,r) + \alpha_d(x,\rho))
\end{equation}
for $x\in \Sigma$ and $0 < \rho \leq r \leq 4 \rho$.
\end{corollary}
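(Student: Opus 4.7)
The strategy is to apply Lemma \ref{t2.10} at both scales $r$ and $\rho$ to the same common ``good'' pair $(y,t)$, thereby obtaining two different expressions for $\theta_d(y,t)$ which must agree. First I would reduce to the regime where $\alpha_d(x,r)$ and $\alpha_d(x,\rho)$ are both at most some threshold $\alpha_0$ (to be chosen small depending on $d, \CD$). Outside this regime the inequality \eqref{2.64} is trivial: since $c_d \in [1, 2^d]$ and $\mu(B(x,r)) \leq \CD^2 \mu(B(x,\rho))$ by doubling (using $r \leq 4\rho$), the quantity $\log(\theta_d^\ast(x,r)/\theta_d^\ast(x,\rho))$ is bounded by a constant depending only on $d$ and $\CD$, which is absorbed into $C(\alpha_d(x,r)+\alpha_d(x,\rho))/\alpha_0$.

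Next I would fix $\kappa \in (0, 1/16)$ depending only on $d$ and $\CD$, then choose $\varepsilon$ small (depending on $\kappa, d, \CD$), and finally apply Lemma \ref{t2.10} at both base points $(x,r)$ and $(x,\rho)$. The crucial step is to produce a pair $(y,t) \in \cG_d(x,r) \cap \cG_d(x,\rho)$. For the point $y$: by \eqref{2.52} both $G_\Sigma^{(r)}$ and $G_\Sigma^{(\rho)}$ fill most of their respective half-balls in $\mu$-measure, and by doubling $\mu(B(x,r/2)) \leq \CD^3 \mu(B(x,\rho/2))$, so the intersection $G_\Sigma^{(r)} \cap G_\Sigma^{(\rho)} \cap B(x,\rho/2)$ has positive $\mu$-measure provided $\varepsilon$ is small enough. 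For the scale $t$: I would look inside the common interval $I = (\kappa r, \rho/2)$, which has length at least $\rho(1/2 - 4\kappa) > 0$. The set of $t \in I$ failing either of the two conditions $r^{-1}t \in G_\kappa^{(r)}(y)$ or $\rho^{-1}t \in G_\kappa^{(\rho)}(y)$ has Lebesgue measure at most $(r+\rho)\varepsilon \leq 5\rho\varepsilon$ by \eqref{2.54}, so a good common $t$ exists for $\varepsilon$ small enough.

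For such a common good pair $(y,t)$, rewriting \eqref{2.56} using the definition \eqref{2.63} of $\theta_d^\ast$ yields
\begin{equation*}
\theta_d(y,t) = \theta_d^\ast(x,r)\bigl[1 + O(\kappa^{-d}\varepsilon^{-1} \alpha_d(x,r))\bigr],
\end{equation*}
and likewise
\begin{equation*}
\theta_d(y,t) = \theta_d^\ast(x,\rho)\bigl[1 + O(\kappa^{-d}\varepsilon^{-1} \alpha_d(x,\rho))\bigr].
\end{equation*}
Dividing these two equalities and taking the logarithm (the error terms are small once $\alpha_0$ is small enough so that $1 + O(\cdots)$ is bounded away from $0$) gives
\begin{equation*}
\Bigl|\log\bigl(\theta_d^\ast(x,r)/\theta_d^\ast(x,\rho)\bigr)\Bigr| \leq C \kappa^{-d}\varepsilon^{-1}\bigl(\alpha_d(x,r) + \alpha_d(x,\rho)\bigr),
\end{equation*}
and since $\kappa$ and $\varepsilon$ are fixed in terms of $d$ and $\CD$, this is the desired bound.

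The main obstacle I expect is the combinatorial step of finding a common good pair: one must verify both that the doubling constant does not blow up the $\mu$-measure bound on $G_\Sigma^{(r)} \cap G_\Sigma^{(\rho)}$ (this only costs a factor of $\CD^3$ since $r \leq 4\rho$), and that the common scale interval $(\kappa r, \rho/2)$ is not swallowed by the bad set of scales (this forces $\kappa < 1/8$ and $\varepsilon$ small compared to $1/2 - 4\kappa$). Once the constants are sequenced in the correct order ($\kappa$, then $\varepsilon$, then $\alpha_0$), everything depends only on $d$ and $\CD$ as required.
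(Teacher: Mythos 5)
Your proposal is correct and follows essentially the same route as the paper: reduce to the case where both $\alpha$'s are below a threshold (the complementary case being trivial by doubling and $c_d\in[1,2^d]$), produce a common good pair $(y,t)\in\cG_d(x,r)\cap\cG_d(x,\rho)$ via the measure bounds \eqref{2.52} and \eqref{2.54}, and apply Lemma \ref{t2.10} at both scales to the same $\theta_d(y,t)$. The paper makes the same choices (it fixes $\kappa=10^{-1}$ and $\varepsilon=(100\CD^3)^{-1}$), so no further comment is needed.
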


\ms
\begin{proof}  
Choose
\begin{equation} \label{2.65}
\kappa = 10^{-1} \ \text{ and } \ \varepsilon = (100 \CD^3)^{-1};
\end{equation}
this gives a constant $\alpha_0$ such that the conclusions of  
Lemma \ref{t2.10} hold when we have \eqref{2.50}. We first
prove that \eqref{2.64} holds when
\begin{equation} \label{2.66}
\alpha_d(x,r) + \alpha_d(x,\rho) \leq \alpha_0,
\end{equation}
so that we can apply Lemma \ref{t2.10} to both pairs $(x,\rho)$ and $(x,r)$.

Let $G_\Sigma$ be the set of \eqref{2.51}, and denote by $G'_\Sigma$
its analogue for the radius $\rho$.  By \eqref{2.52},
\[
\mu(B(x,r/2) \sm G_\Sigma)
\leq 32 \CD \varepsilon \mu(B(x,r/2))
\leq 32 \CD^3 \varepsilon \mu(B(x,\rho/2))
\]
and similarly
\[
\mu(B(x,\rho/2) \sm G'_\Sigma)
\leq 32 \CD \varepsilon \mu(B(x,\rho/2)).
\]
Then by the definition of $\varepsilon$ (see\eqref{2.65})
\[
\mu(B(x,\rho/2) \sm (G_\Sigma \cap G'_\Sigma))
\leq 32 [\CD^3+\CD] \varepsilon \mu(B(x,\rho/2))
< \mu(B(x,\rho/2).
\]
Thus there is 
$y\in \Sigma \cap B(x,\rho/2) \cap G_\Sigma \cap G'_\Sigma$.

Then we have two large sets $G_\kappa(y)$ and $G'_\kappa(y)$, associated to
$y$ as in \eqref{2.53} (but with the radii $r$ and $\rho$), 
and \eqref{2.54} allows us to choose $t \in (\kappa r, r/2)$
such that $s = r^{-1}t \in G_\kappa(y)$, but also $s' = \rho^{-1} t \in G'_\kappa(y)$
(this last forces $t\in (\kappa\rho,\rho/2)$, but there is a lot of room
left since $\rho \leq r \leq 4 \rho$ and $\kappa = 10^{-1}$).
Then $(y,t)$ lies in both good sets $\cG(x,r)$ and $\cG(x,\rho)$ 
(see \eqref{2.55}). By \eqref{2.56} for $r$,
\begin{equation} \label{2.67}
\Big| {\theta_d(y,t) r^d \over \mu(B(x,r))} - c_d(x,r)  \Big|
\leq C \kappa^{-d} \varepsilon^{-1} \alpha_d(x,r) \leq C \alpha_d(x,r),
\end{equation}
where from now on $C$ also depends on $\CD$ (because of $\varepsilon$ see \eqref{2.65}).
Similarly,
\[
\Big| {\theta_d(y,t) \rho^d \over \mu(B(x,\rho))} - c_d(x,\rho)  \Big|
\leq C \kappa^{-d} \varepsilon^{-1} \alpha_d(x,\rho).
\]
In addition, by \eqref{2.14} $1 \leq c_d(x,\rho) \leq 2^d$,
so \eqref{2.67} yields
\[
\Big| {\theta_d(y,t) r^d \over c_d(x,r) \mu(B(x,r))} - 1 \Big|
\leq C \alpha_d(x,r),
\]
and the definition \eqref{2.63} yields
\[
\big| \log(\theta_d(y,t)) - \log(\theta_d^\ast(x,r)) \big|
= \Big| \log\Big({\theta_d(y,t) r^d \over c_d(x,r) \mu(B(x,r))}\Big) \Big|
\leq C \alpha_d(x,r)
\]
We have a similar estimate for $\rho$, 
and \eqref{2.64} follows by adding the two.

We are left with the case when \eqref{2.66} fails.
In this case, we just observe that 
$\mu(B(x,\rho)) \leq \mu(B(x,r)) \leq \CD^2\mu(B(x,\rho))$
by the doubling property \eqref{1.1}, and since $c_d(x,r)$ and 
$c_d(x,\rho)$ both lie in $[1,2^d]$ by \eqref{2.14}, the definition \eqref{2.63}
yields $C^{-1} \leq {\theta_d^\ast(x,r)\over \theta_d^\ast(x,\rho)} \leq C$, 
which is enough for \eqref{2.64} because \eqref{2.66} fails.
Corollary~\ref{t2.11} follows.
\end{proof}

\ms
We end this section with another estimate on densities. This one is less precise 
than Lemma \ref{t2.10} or Corollary~\ref{t2.11}, but it is still rather useful.

\begin{lemma} \label{t2.12}
There is a constant $C \geq 0$, which depends only on $d$,
such that if $d \geq 1$,  $0 < \delta \leq 10^{-2}$, and $y \in B(x,r/3)$ are such that
$\dist(y,W_d(x,r)) \leq \delta r$, then
\begin{equation} \label{2.68}
\Big|{\mu(B(y, a r)) \over \mu(B(x,r))} - c_d(x,r) a^d \Big| 
\leq C \delta + 2 \delta^{-1} \alpha_d(x,r)
\ \text{ for } 0 < a < 1/3.
\end{equation}
\end{lemma}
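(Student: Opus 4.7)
\bigskip

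\textbf{Proof plan for Lemma \ref{t2.12}.} The plan is to translate the ball $B(y,ar)$ into the normalized picture and compare $\mu_{x,r}(B(\xi_y,a))$ (where $\xi_y = r^{-1}(y-x) \in B(0,1/3)$) directly with $\nu_{V_d(x,r)}$ by testing $\W_1$ against explicit Lipschitz approximations of $\1_{B(\xi_y,a)}$ at scale $\delta$. This is in the same spirit as Lemmas \ref{t2.3}--\ref{t2.4}, except we fix one radius $a$ instead of averaging over $s$, and pay for it with the $\delta^{-1}$ in the Lipschitz norm of the cutoffs, which explains the shape of the right-hand side in \eqref{2.68}.

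First I would set $\xi_w$ to be the orthogonal projection of $\xi_y$ onto $V_d(x,r)$, so that $\delta_0 := |\xi_y - \xi_w| = r^{-1}\dist(y,W_d(x,r)) \leq \delta$. Define
\[
\phi^-(z) = \min\Big(1,\, \delta^{-1}(a - |z-\xi_y|)_+\Big), \qquad
\phi^+(z) = \min\Big(1,\, \delta^{-1}(a+\delta - |z-\xi_y|)_+\Big),
\]
so that $\phi^- \leq \1_{B(\xi_y,a)} \leq \phi^+$, both functions are $\delta^{-1}$-Lipschitz, and they are supported in $B(\xi_y,a+\delta) \subset \bB$ (because $|\xi_y| + a + \delta < 1/3 + 1/3 + 10^{-2} < 1$). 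Since $\delta\phi^\pm$ are $1$-Lipschitz and supported in $\bB$, the definition \eqref{1.5} and the bound \eqref{2.13} give
\[
\Big|\int \phi^\pm d\mu_{x,r} - \int \phi^\pm d\nu_{V_d(x,r)}\Big|
\leq \delta^{-1} \W_1(\mu_{x,r},\nu_{V_d(x,r)}) \leq 2\delta^{-1}\alpha_d(x,r).
\]
Combining with $\int \phi^- d\mu_{x,r} \leq \mu_{x,r}(B(\xi_y,a)) \leq \int \phi^+ d\mu_{x,r}$ and $\int \phi^- d\nu \geq \nu(B(\xi_y,a-\delta))$, $\int \phi^+ d\nu \leq \nu(B(\xi_y,a+\delta))$, one gets
\[
\nu_{V_d(x,r)}(B(\xi_y,a-\delta)) - 2\delta^{-1}\alpha_d(x,r)
\leq \mu_{x,r}(B(\xi_y,a))
\leq \nu_{V_d(x,r)}(B(\xi_y,a+\delta)) + 2\delta^{-1}\alpha_d(x,r).
\]

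Next, I would evaluate the two $\nu$-integrals explicitly. Using Pythagoras, $V_d(x,r) \cap B(\xi_y,\rho)$ equals the $d$-dimensional ball of radius $\sqrt{(\rho^2 - \delta_0^2)_+}$ in $V_d(x,r)$ centered at $\xi_w$; provided $a+\delta < 1 - |\xi_w|$ (which holds by the same coarse estimate as above), this ball lies in $\bB$, so the normalization \eqref{1.2} and \eqref{2.15} yield
\[
\nu_{V_d(x,r)}(B(\xi_y,\rho)) = c_d(x,r)\big((\rho^2 - \delta_0^2)_+\big)^{d/2}.
\]

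Finally, the last step is the elementary bookkeeping that $c_d(x,r)\big((\rho^2 - \delta_0^2)_+\big)^{d/2}$ equals $c_d(x,r) a^d$ up to an error $C\delta$ when $\rho = a\pm\delta$ and $\delta_0 \leq \delta$. For the upper side, $((a+\delta)^2 - \delta_0^2)^{d/2} \leq (a+\delta)^d \leq a^d + d\delta$ since $a+\delta \leq 1$. For the lower side, if $a \geq 2\delta$ I write $a - \sqrt{(a-\delta)^2 - \delta_0^2} = \frac{2a\delta - \delta^2 + \delta_0^2}{a + \sqrt{(a-\delta)^2 - \delta_0^2}} \leq 3\delta$, and the mean value theorem gives $a^d - ((a-\delta)^2 - \delta_0^2)^{d/2} \leq 3d\delta$; if $a < 2\delta$, then $a^d \leq 2^d\delta$ and the upper bound applied with radius $2\delta$ already shows $\mu_{x,r}(B(\xi_y,a)) \leq C\delta + 2\delta^{-1}\alpha_d(x,r)$, so \eqref{2.68} holds trivially in this regime. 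Since $c_d(x,r)\leq 2^d$ by \eqref{2.14}, combining these estimates produces the required bound, with $C$ depending only on $d$. The only subtle point is handling small $a$, which I dispose of by the case split above; everything else is linear estimation plus a single application of the $\W_1$ duality. \qed
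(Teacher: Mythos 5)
Your proposal is correct and follows essentially the same route as the paper: test the $\W_1$ duality against $\delta^{-1}$-Lipschitz mollifications of the indicator of a ball of radius about $a$, evaluate the flat measure exactly, and dispose of the lower bound for $a<2\delta$ by using $d\geq 1$ so that $c_d(x,r)a^d\leq C\delta$. The only cosmetic difference is that you center the cutoffs at $\xi_y$ and compute $\nu_{V_d(x,r)}(B(\xi_y,\rho))$ by Pythagoras, whereas the paper centers them at the nearest point $\xi_z\in V_d(x,r)$ and enlarges/shrinks the radii by an extra $\delta$ to absorb the offset; both yield the same bound.
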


\begin{proof}  
Let $\delta$, $y$, and $a$ be as in the statement,
pick $z\in W_d(x,r)$ such that $|z-y| = \dist(y,W_d(x,r)) \leq \delta r$, 
and set $\xi_y = (y-x)/r$ and $\xi_z = (z-x)/r$. Thus 
$\xi_y \in B(0,1/3)$ and $|\xi_x-\xi_z| \leq \delta$.
Let $\psi$ be a Lipschitz function such that $\1_{B(\xi_z, a + \delta)}\le \psi\le \1_{B(\xi_z, a +2 \delta)}$ and
$\psi$ is $\delta^{-1}$-Lipschitz.  Here $\1_B$ denotes the characteristic function of $B$. Notice also
that $\psi = 0$ on $\R^n \sm \bB$, because $|\xi_z| \leq 1/3 + \delta$
and $a \leq 1/3$.
We test \eqref{1.5} with $\delta^{-1}\psi$ and get that
\begin{equation} \label{2.70}
\Big| \int \psi d\mu_{x,r} - \int \psi d\nu_{V_d(x,r)} \Big|
\leq \delta^{-1} \W_1(\mu_{x,r}, \nu_{V_d(x,r)}) \leq 2 \delta^{-1} \alpha_1(x,r)
\end{equation}
by \eqref{2.13}. Notice that \eqref{2.15}, combined with the fact that $\xi_z \in V_d(x,r)$, the normalization \eqref{1.2},
and since $c_d(x,r) \leq 2^d$ by \eqref{2.14} and 
$(a + 2\delta)^d \leq a^d + C \delta$ for $a \leq 1$ and $\delta \leq 10^{-2}$ yields
\begin{eqnarray}\label{2.70A}
\int \psi d\nu_{V_d(x,r)} &\leq \nu_{V_d(x,r)} (B(\xi_z, a + 2\delta))
= c_d(x,r) \H^d(V_d(x,r) \cap B(\xi_z, a + 2\delta))\nonumber
\\
& = c_d(x,r) (a + 2\delta)^d  \leq c_d(x,r) a^d + C \delta 
\end{eqnarray}
On the other hand since $|\xi_y-\xi_z| \leq \delta$, and by the definition \eqref{1.6}
of $\mu_{x,r}$ we have
\begin{equation}\label{2.70B}
\int \psi d\mu_{x,r} \geq \mu_{x,r}(B(\xi_z, a + \delta))
\geq \mu_{x,r}(B(\xi_y,a))
= {\mu(B(y,ar)) \over \mu(B(x,r))}.
\end{equation}
Thus combining \eqref{2.70}, \eqref{2.70A} and \eqref{2.70B} we have
\[
{\mu(B(y,ar)) \over \mu(B(x,r))} \leq c_d(x,r) a^d + C \delta 
+2 \delta^{-1} \alpha_1(x,r)
\]
This gives an upper bound which is compatible with \eqref{2.68}.

For the lower bound, first observe that if $a \leq 2\delta$, the lower
bound coming from the fact that $\mu(B(y,ar)) \geq 0$ is enough,
because $a^d c_d(x,r) \leq C \delta$. 
This is where we use our extra assumption that $d \geq 1$.
Otherwise, use a different function $\psi$ defined such that $\1_{B(\xi_z, a -2 \delta)}\le \psi \le \1_{B(\xi_z, a - \delta)}$
 We still have \eqref{2.70}, and now
\begin{eqnarray}\label{2.71A}
\int \psi d\nu_{V_d(x,r)} &\geq \nu_{V_d(x,r)} (B(\xi_z, a - 2\delta))
= c_d(x,r) \H^d(V_d(x,r) \cap B(\xi_z, a - 2\delta))\nonumber
\\
&
= c_d(x,r) (a - 2\delta)^d  \geq c_d(x,r) a^d - C \delta,
\end{eqnarray}
while
\begin{eqnarray}\label{2.71B}
\int \psi d\mu_{x,r} \leq \mu_{x,r}(B(\xi_z, a - \delta))
\leq \mu_{x,r}(B(\xi_y,a))
= {\mu(B(y,ar)) \over \mu(B(x,r))}
\end{eqnarray}
as before. Thus we deduce from \eqref{2.70},  \eqref{2.71A} and \eqref{2.71B} that
\[
{\mu(B(y,ar)) \over \mu(B(x,r))} \geq c_d(x,r) a^d -  C \delta 
- 2 \delta^{-1} \alpha_1(x,r)
\]
which gives the lower bound needed for \eqref{2.68}.
Lemma \ref{t2.12} follows.
\end{proof}

\section{Proof of the decomposition result - Theorem \ref{t1.5}}
\label{proof1}

Let $\mu$ be a doubling measure, denote by $\Sigma$ its support, and
set 
\begin{equation} \label{3.1}
\Sigma_0 = \big\{ x\in \Sigma \, ; \, \int_0^1 \alpha(x,r) {dr \over r} < \infty \big\}.
\end{equation}
We want to cut $\Sigma_0$ into $d$-dimensional pieces
$\Sigma_0(d)$, $0 \leq d \leq n$. In order to do this
we first fix a point $x \in \Sigma$ and do some estimates which indicate
which $\Sigma_0(d)$ $x$ belongs to.

We do not need all the numbers $\alpha(x,r)$, one per dyadic interval is be enough.
For each integer $k \geq 0$, we set
\begin{equation} \label{3.2}
\alpha(k) = \inf\big\{ \alpha(x,r) \, ; \, r \in [2^{-k-1},2^{-k}] \big\}
\end{equation}
and then choose $r_k \in [2^{-k-1},2^{-k}]$ and $d = d(k)$ 
such that
\begin{equation} \label{3.3}
\alpha_d(x,r_k) = \alpha(x,r_k) \leq 2 \alpha(k).
\end{equation}
Notice that
\begin{equation} \label{3.4}
\sum_{k} \alpha(k) \leq \sum_k \fint_{[2^{-k-1},2^{-k}]} \alpha(x,r)
\leq 2\int_0^1 \alpha(x,r) {dr \over r} < \infty.
\end{equation}
Set
\begin{equation} \label{3.5}
V(k) = V_{d(k)}(x,r_k) \ \text{ and }Ê\ W(k) = x+ r_k V_{d(k)}(x,r_k)
\end{equation}
as in \eqref{2.17}. We restate some of the estimates from the previous section in this context.

\begin{lemma} \label{t3.1}
For each small $\beta > 0$, there exists $\alpha_1 > 0$, that depends
on $d$ and the doubling constant $\CD$, such that
\begin{equation} \label{3.6}
\dist(y,W(k)) \leq \beta 2^{-k} \ \text{ for } y\in \Sigma \cap B(x,2^{-k-2})
\end{equation}
and 
\begin{equation} \label{3.7}
\dist(y,\Sigma) \leq \beta 2^{-k} \ \text{ for } y\in W(k) \cap B(x,2^{-k-2})
\end{equation}
whenever $\alpha(k) \leq \alpha_1$, and 
\begin{equation} \label{3.8}
d(k+1) = d(k) \hbox{  provided  } \alpha(k) +\alpha(k+1) \leq \alpha_1.
\end{equation}
 \end{lemma}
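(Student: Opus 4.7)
The plan is to derive \eqref{3.6} and \eqref{3.7} directly from Lemmas \ref{t2.6} and \ref{t2.7}, and then deduce \eqref{3.8} by a short geometric comparison of the two approximating planes $W(k)$ and $W(k+1)$.

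For \eqref{3.6}, I would apply Lemma \ref{t2.6} at scale $r_k \in [2^{-k-1}, 2^{-k}]$: since $B(x, 2^{-k-2}) \subset B(x, 2r_k/3)$, each $y$ in that ball satisfies
\[
\dist(y, W(k)) \leq C r_k\, \alpha_{d(k)}(x, r_k)^\eta \leq 2C\, 2^{-k}\, \alpha(k)^\eta,
\]
with $C, \eta > 0$ depending only on $\CD$; choosing $\alpha_1$ so small that $2C \alpha_1^\eta \leq \beta$ then yields \eqref{3.6}. Analogously, Lemma \ref{t2.7} applied at scale $r_k$ gives, for $y \in W(k) \cap B(x, 2^{-k-2}) \subset W(k) \cap B(x, r_k/2)$,
\[
\dist(y, \Sigma) \leq 16 r_k\, \alpha_{d(k)}(x, r_k)^{1/(d(k)+1)} \leq C'\, 2^{-k}\, \alpha(k)^{1/(n+1)},
\]
and shrinking $\alpha_1$ further covers \eqref{3.7}. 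All constants are uniform over $d(k) \in \{0,\ldots,n\}$.

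The main obstacle is \eqref{3.8}; I argue by contradiction. Fix an auxiliary $\beta_0 = 2^{-10}$ and, by shrinking $\alpha_1$ once more, assume that \eqref{3.6} and \eqref{3.7} hold for both indices $k$ and $k+1$ with $\beta_0$ in place of $\beta$. Suppose $d(k) \neq d(k+1)$; without loss of generality $d(k) > d(k+1)$, so $d(k) \geq 1$. Since $x \in \Sigma$, \eqref{3.6} at $k$ applied to $y=x$ gives $\dist(x, W(k)) \leq \beta_0 2^{-k}$; let $x_k$ be the foot of perpendicular from $x$ to $W(k)$, so $|x_k - x| \leq \beta_0 2^{-k}$. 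Then $W(k) \cap B(x, 2^{-k-5})$ contains a $d(k)$-dimensional disk $D$ of radius $R = 2^{-k-6}$ centered at $x_k$.

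For any $y \in D$, \eqref{3.7} at $k$ gives $y' \in \Sigma$ with $|y - y'| \leq \beta_0 2^{-k}$; smallness of $\beta_0$ forces $y' \in B(x, 2^{-k-3})$, and \eqref{3.6} at $k+1$ then gives $\dist(y', W(k+1)) \leq \beta_0 2^{-k-1}$. Combining,
\[
\dist(y, W(k+1)) \leq 2\beta_0 2^{-k} = 2^{-k-9} \quad \text{for all } y \in D.
\]
Since $\dim W(k) > \dim W(k+1)$, the orthogonal projection of the direction space of $W(k)$ onto that of $W(k+1)$ has a nontrivial kernel, producing a unit vector $v$ in the direction space of $W(k)$ orthogonal to the direction space of $W(k+1)$. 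Writing $p$ for the closest point of $W(k+1)$ to $x_k$ and $u = x_k - p$, both $u$ and $v$ lie in the orthogonal complement of the direction space of $W(k+1)$, so, applying the display at $y=x_k$ to bound $|u|$,
\[
\dist(x_k + Rv, W(k+1)) = |u + Rv| \geq R - |u| \geq 2^{-k-6} - 2^{-k-9} > 2^{-k-9},
\]
contradicting the previous display (noting $x_k + Rv \in D$). Hence $d(k) = d(k+1)$, and taking $\alpha_1$ to be the minimum of the thresholds from all three parts gives a single constant depending only on $\beta$, $n$, and $\CD$.
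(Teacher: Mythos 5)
Your proof is correct and follows the same route as the paper: \eqref{3.6} and \eqref{3.7} come straight from Lemmas \ref{t2.6} and \ref{t2.7} applied at scale $r_k$, and \eqref{3.8} from the mutual closeness of $W(k)$ and $W(k+1)$ on a ball around $x$. The only difference is that you make explicit (via the unit vector in the direction space of the larger plane that is orthogonal to the smaller one) the final step that the paper leaves as "for $\beta$ small enough this forces $W(k+1)$ and $W(k)$ to have the same dimension", and that filled-in detail is sound.
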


\ms
\begin{proof}  
Recall from Lemma \ref{t2.6} that
\begin{equation} \label{3.9}
\dist(y,W(k)) \leq C \alpha(k)^\eta r_k 
\ \text{ for } y\in \Sigma \cap B(x,r_k/2),
\end{equation}
for some constants $C$ and $\eta$ that depend on the doubling
constant; \eqref{3.6} follows. Similarly,
Lemma \ref{t2.7} says that
\begin{equation} \label{3.10}
\dist(y,\Sigma) \leq 16 \alpha(k)^{1\over d(k+1)} r_k 
\ \text{ for } y\in W(k) \cap B(x,r_k/2),
\end{equation}
which implies \eqref{3.7}.
Finally assume that $\alpha(k)$ and $\alpha(k+1)$ are both small;
it follows from \eqref{3.6} and \eqref{3.7} (applied with $\beta = 10^{-3}$,
say) that 
\begin{equation} \label{3.11}
\dist(y,W(k+1)) \leq 2\, \beta 2^{-k} \ \text{ for } y\in W(k) \cap B(x,2^{-k-4}),
\end{equation}
and similarly with $W(k+1)$ and $W(k)$ exchanged. 
For $\beta$ small enough this forces $W(k+1)$ and $W(k)$ to have 
the same dimension.
\end{proof}

\ms
Although the proof above gives some estimate for the distance between
$W(k+1)$ and $W(k)$ we use the more precise ones given in Lemma \ref{t2.8}. 

In the mean time, observe that $\lim_{k \to \infty} \alpha(k) =0$,
by \eqref{3.4}, so the assumptions of Lemma \ref{t3.1} are satisfied for $k$
large, the sequence $\{ d(k) \}$ is stationary, and we can set
\begin{equation} \label{3.12}
d_{x} = \lim_{k \to \infty} d(k) \in [0,n].
\end{equation}
Naturally we take
\begin{equation} \label{3.13}
\Sigma_0(d) = \big\{ x\in \Sigma_0 \, ; \,  d_{x} = d \big\},
\end{equation}
and this gives the desired partition of $\Sigma_0$.

\ms
Next we want to estimate the densities. Fix $d$ and $x \in \Sigma_0(d)$,
and set
\begin{equation} \label{3.14}
\theta(k) = \theta_d^\ast(x,r_k)
= r_k^{-d} c_{d}(x,r_k) \mu(B(x,r_k)),
\end{equation}
where the second part is the definition \eqref{2.63}
of the stabilized density $\theta_d^\ast(x,r_k)$.
By Corollary \ref{t2.11}, 
\begin{equation} \label{3.15}
\Big| \log\Big({\theta(k+1) \over \theta(k)}\Big)\Big|
\leq C (\alpha_d(x,r_k) + \alpha_d(x,r_{k+1}))
\end{equation} 
with a constant $C$ that depends only on $d$ and $\CD$.
Since for $k$ large, $d(k) = d$ and 
$\alpha_d(x,r_k) = \alpha(x,r_k) \leq 2 \alpha(k)$ (by (3.3)),
\eqref{3.15} and \eqref{3.4} allow us to define
\begin{equation} \label{3.16}
\theta = \lim_{k \to \infty} \theta(k) \in (0,\infty).
\end{equation}

\ms
Let us get rid of the case when $d=0$. Suppose $d_x = 0$; then 
for $k$ large, $\alpha(k) = \alpha_0(x,r_k)$, and by \eqref{3.14}
$\theta(k) = \mu(B(x,r_k))$ (recall that when $d=0$, \eqref{2.14} readily
gives $c_d(x,r_k) = 1$ because $\nu_{V_0(x,r)}$ is a Dirac mass somewhere
in $B(x,r/2)$). 
By \eqref{3.16},  $\theta(k)$ has a positive limit,
and hence $\mu$ has an atom at $x$. Conversely, if $\mu$ has an atom at
$x$, Lemma \ref{t2.2} says that $x$ is an isolated point of $\Sigma$; then
for $r$ small, $\mu_{x,r}$ is a Dirac mass at the origin, $\alpha_0(x,r) = 0$,
we can take the index $d(x,r)$ above \eqref{2.16} equal to $0$ 
(in fact, we have to: it would be very easy to show that 
$\alpha_d(x,r) >0$ for $d \geq 1$), and hence we get that $d_x = 0$. 
Of course the set of points where $\mu$ has an atom (or where
$\Sigma$ has an isolated point) is at most countable, so we established the
description of $\Sigma_0(0)$ that was given in Part 1 of Theorem \ref{t1.5}.

\ms
We may now assume that $d_x \geq 1$.
Let us check that 
\begin{equation} \label{3.17}
\lim_{r \to 0} \theta_d(x,r) = \theta = \lim_{k \to \infty} \theta(k)
\end{equation}
(see \eqref{2.49}, \eqref{3.14}, and \eqref{3.16} for the definitions).
For each small $r > 0$, choose $k$ such that $2^{k-2} \leq r < 2^{k-1}$,
and then apply Lemma \ref{t2.12} to the ball $B(x,r_k)$, with $y=x$ and 
$a = r_k^{-1} r$; this yields 
\begin{equation} \label{3.18}
\Big|{\mu(B(x, r)) \over \mu(B(x,r_k))} - c_d(x,r_k) a^d \Big| 
\leq C \delta + 2 \delta^{-1} \alpha_d(x,r_k)
\end{equation}
for $0 < \delta \leq 10^{-2}$ such that 
$r_k^{-1}\dist(0,V_d(x,r_k)) \leq \delta$.
Let us take $\delta = C \alpha(k)^\eta$, where $C$ and 
$\eta$ are as in \eqref{3.9}; we can safely assume that $\eta \leq 1/2$
(otherwise, replace \eqref{3.9} with a worse estimate). Then \eqref{3.18}
yields 
\begin{equation} \label{3.19}
\Big|{\mu(B(x, r)) \over \mu(B(x,r_k))} - c_d(x,r_k) {r^d \over r_k^d} \Big| 
\leq C \delta + 4 \delta^{-1} \alpha(k)
\leq C' \alpha(k)^\eta 
\end{equation}
because for $k$ large, $\alpha_d(x,r_k) = \alpha(x,r_k) \leq 2\alpha(k)$
by \eqref{3.3}, and then by definition of $\delta$. 
Since $\lim_{k \to \infty} c_d(x,r_k) = 1$ by \eqref{3.9}, \eqref{2.14}, and
\eqref{1.2}, we see that $\theta_d(x,r)/\theta(x,r_k)$ tends to $1$
when $r$ tends to $0$ (see the definitions \eqref{2.49} and \eqref{3.14});
\eqref{3.17} follows at once.
Notice that Part 2 of Theorem \ref{t1.5} follows from this.

\ms
Let us now control the angles between the $W(k)$. 
We measure the angle between two vector spaces
$V$ and $W$ with the number
\begin{equation} \label{3.20}
\delta(V,W) = \| \pi_V - \pi_W \|,
\end{equation}
where $\pi_V$ is the orthogonal projection on $V$, 
$\pi_W$ is the orthogonal projection on $W$, and $\| \cdot \|$
is an operator norm on $\R^n$. Notice that $\delta(V,W)$
satisfies the triangle inequality.

Let $W^\ast(k)$ denote the vector space parallel to 
$W(k) = W_d(x,r_k)$ (for $k$ large); we claim that for $r_k$ as in \eqref{3.3}
\begin{equation} \label{3.21}
\delta(W^\ast(k), W^\ast(k+1)) \leq C (\alpha_d(x,r_k)+\alpha_d(x,r_{k+1})),
\end{equation}
with a constant $C$ that depends only on $d$ and $\CD$.

To see this, apply Lemma \ref{t2.8} to the pairs $(x,r_k)$ and 
$(x,r_{k+1})$, with $d=d_x$ and $a = 1/4$. We get that the two 
planes $W(k)$ and $W(k+1)$ are 
$C(\alpha_d(x,r_k)+\alpha_d(x,r_{k+1}))r_k$-close in 
$B(x,2r_k)$, and \eqref{3.21} follows.

Since $\sum_k \alpha(k) < \infty$ by \eqref{3.4}, 
and $\alpha_d(x,r_k) = \alpha(x,r_k) \leq 2\alpha(k)$
for $k$ large, by \eqref{3.3}, \eqref{3.12}, and because $d=d_x$,
so $\sum_k \alpha_d(x,r_k) < \infty$. We deduce from
\eqref{3.21} that the $W(k)^\ast$ converge to some vector space $W^\ast$,
and even that 
\begin{equation} \label{3.22}
\delta(W(k)^\ast, W^\ast) \leq C \sum_{l \geq k} \alpha_d(x,r_l),
\end{equation}
which tends to $0$. Denote by $W$ the $d$-plane through
$x$ parallel to $W^\ast$. Let us check that
\begin{equation} \label{3.23}
\text{$W$ is a tangent plane to $\Sigma$ at $x$.}
\end{equation}
Let $y \in \Sigma$ be given, and choose $k$ such that 
$2^{-k-3} \leq |y-x| \leq 2^{-k-1}$. Then $y\in B(x,r_k/2)$.
For each $\beta > 0$, \eqref{3.6} says that if $k$ is large enough
(i.e., if $|y-x|$ is small enough),
$x$ and $y$ both lie within $\beta 2^{-k}$ of $W(k)$.
Since $|y-x| \geq 2^{-k-3}$, this implies that 
$\dist(y-x, W^\ast(k)) \leq C \beta |y-x|$; \eqref{3.23} follows.

\ms
Our next step is to show that 
\begin{equation} \label{3.24}
\text{$\mu_{x,r}$ converges weakly to $\H^d|_{W^\ast}$ when $r$ tends to $0$.}
\end{equation}

By this we mean that for each continuous function $\psi$ with compact support,
\begin{equation} \label{3.25}
\lim_{r \to 0} \int \psi d\mu_{x,r} = \int_{W^\ast} \psi d\H^d.
\end{equation}
Let us first prove this when $\psi$ is Lipschitz. Let $R$ be such that $\psi$
is supported in $B(0,R)$, and let $r > 0$ be given. Choose $k$ such that
$2^{k-2} \leq r R \leq 2^{k-1}$, and notice that by \eqref{1.6} and two changes of variable we have
\begin{eqnarray} \label{3.26}
\int \psi d\mu_{x,r} &=& {1 \over \mu(B(x,r))} \int \psi(\frac{u-x}{r}) d\mu(u)
\nonumber \\
&=& {\mu(B(x,r_k)) \over \mu(B(x,r))} 
\int \psi\big({r_k v \over r}\big) d\mu_{x,r_k}(v).
\end{eqnarray}
 
Then 
\begin{equation}  \label{3.27}
\begin{aligned} 
\Big| \int \psi\big({r_k v \over r}\big) d\mu_{x,r_k}(v)
- \int \psi\big({r_k v \over r}\big) d\nu_{V_d(x,r_k)}(v) \Big|
&\leq {r_k \over r} \, ||\psi||_{\Lip} \W_1(\mu_{x,r_k}, \nu_{V_d(x,r_k)})
\\
\leq 4R ||\psi||_{\Lip} \alpha_d(x,r_k)
&\leq 4R ||\psi||_{\Lip} \alpha(k)
\end{aligned} 
\end{equation}
because $r_k \leq 2^{-k}$ (see above \eqref{3.3}),
by \eqref{1.5}, \eqref{2.13}, and for $k$ large.
In addition, \eqref{2.14} says that
\begin{equation}\label{3.27A}
\int \psi({r_k v \over r}) d\nu_{V_d(x,r_k)}(v)
= c_d(x,r_k) \int_{V_d(x,r_k)} \psi({r_k v \over r}) d\H^d(v).
\end{equation}
By \eqref{2.17}, \eqref{3.5}, and \eqref{3.9},
$\dist(0,V_d(x,r_k)) = 
r_k^{-1} \dist(x,W_d(x,r_k)) \leq C \alpha(k)^\eta$
tends to $0$, and by \eqref{3.22} the vector space $W(k)^\ast$
parallel to $W(k)$ and  $V_d(x,r_k)$ tends to $W^\ast$.
So $V_d(x,r_k)$ tends to $W^\ast$. Since in addition 
$c_d(x,r_k)$ tend it $1$ and $\psi$ is continuous and compactly supported, 
we get that using the fact that $R\le r_k/r\le 4R$ that 
\begin{equation}\label{3.27B}
\lim_{r\rightarrow 0} \left\{c_d(x,r_k) \int_{V_d(x,r_k)} \psi({r_k v \over r}) d\mathcal{H}^d(v)
- \int_{W^\ast} \psi({r_k v \over r}) d\H^d(v)\right\} =0
\end{equation}
Note that if $r\rightarrow 0$ then $k\rightarrow \infty$, and \eqref{3.27B} is shown by showing convergence to the same limit of every sub sequential limit.
Moreover since $W^\ast$ we have
\begin{equation}\label{3.27C}
\int_{W^\ast} \psi({r_k v \over r}) d\H^d(v)
= {r^d \over r_k^d} \int_{W^\ast} \psi(u) d\H^d(u)
\end{equation}
Hence combining \eqref{3.17},  \eqref{3.26},  \eqref{3.27},  \eqref{3.27A},  \eqref{3.27B} and  \eqref{3.27C} we have
\begin{eqnarray}
\lim_{r\rightarrow 0}\int\psi \, d\mu_{x,r} &=& 
\lim_{r\rightarrow 0}\,\frac{\mu(B(x,r_k)}{\mu(B(x,r))}c_d(x,r_k) \int_{V_d(x,r_k)} \psi({r_k v \over r}) d\mathcal{H}^d(v)\nonumber\\
&=&
\lim_{r\rightarrow 0}\,\frac{\mu(B(x,r_k)}{\mu(B(x,r))} \int_{W^\ast} \psi({r_k v \over r}) d\mathcal{H}^d(v)\nonumber\\
&=&
\lim_{r\rightarrow 0}\frac{\mu(B(x,r_k)}{r_k^d}\cdot\frac{r^d}{\mu(B(x,r))} \int_{W^\ast} \psi(u) d\mathcal{H}^d(u) \nonumber\\
&=&
 \int_{W^\ast} \psi(u) d\mathcal{H}^d(u)
\end{eqnarray}
Thus \eqref{3.25} holds for any Lipschitz function $\psi$.

Notice that $\mu_{x,r}(B(0,2^m)) = \mu(B(x,2^m r))/ \mu(B(x,r)) \leq \CD^m$
for $m \in \bN$; then for any continuous $\psi$ with compact support in some
$B(0,2^m)$, we can approximate $\psi$ uniformly by a sequence of Lipschitz
functions $\psi_j$ with support in $B(0,2^m)$, use the fact that
$\int |\psi-\psi_j| d\mu_{x,r} \leq \CD^m ||\psi-\psi_j||_\infty$
and $\int_{W^{\ast}} |\psi-\psi_j| d\H^d \leq C 2^m ||\psi-\psi_j||_\infty$,
and deduce \eqref{3.25} for $\psi$ from its analogue for the $\psi_j$.

This completes our proof of \eqref{3.24}. Note that  Part 3 of Theorem \ref{t1.5}
follows from \eqref{3.23}, \eqref{3.24} and Remma \ref{t2.1}.

\ms
We are left with Part 4 to check.
We cut $\Sigma_0(d)$ into the subsets 
\begin{equation} \label{3.28}
\Sigma_0(d,k) = \big\{ x\in \Sigma_0(d) \, ; \, 
2^k \leq \theta_d(x) < 2^{k+1} \big\}, \ k\in \bZ,
\end{equation}
as in the statement, and our first task is to show that 
\[
\H^d(\Sigma_0(d,k) \cap B(0,R)) < \infty \ \text{ for every } R>0.
\]
Set $A = \Sigma_0(d,k) \cap B(0,R)$ and let $\varepsilon \in (0,1)$ be given. 
For each $y\in A$, we can find  $r(x) > 0$ such that $r(x)< \varepsilon/5$ and 
\[
r(x)^{-d} \mu(B(x,r(x))) = \theta_d(x,r(x)) \geq 2^{-1}\theta_d(x) \geq 2^{k-1} ,
\]
where we denote by $\theta_d(x)$ the limit in \eqref{3.17} (see \eqref{1.15}).
By Vitali covering lemma 
(see Theorem 2.1 in \cite{Mattila} or the first pages of \cite{Stein}), 
we can find an a countable set $X \subset A$ such that the balls
$B(x,r(x))$, with $x\in X$, are disjoint, and the balls $B(x,5r(x))$, $x\in X$ cover $A$.
Note that since the $B(x,r(x))$ are disjoint and contained in $B(0,R+1)$ then
\[
\sum_{x\in X} (5r(x))^d \leq   \sum_{x\in X} 5^d 2^{1-k} \mu(B(x,r(x)))
\leq 5^d 2^{1-k} \mu(B(0,R+1))
\]
It follows that $\H^d(A) \leq C \mu(B(0,R+1)) < \infty$, which proves that $\mathcal{H}^d\res \Sigma_0(d,k)$ is Radon.
Since 
$\mu$ is Radon Lemma~2.13 in \cite{Mattila}, 
ensures that $2^k \H^d \leq \mu < 2^{k+1} \H^d$ on $\Sigma_0(d,k)$.
In particular these two restrictions are absolutely continuous with respect to
each other, and 
\begin{equation} \label{3.29}
\mu |_{\Sigma_0(d,k)} = \theta_d \H^d|_{\Sigma_0(d,k)},
\end{equation}
either by (2) of Theorem 2.12 of \cite{Mattila}, or by
Lemma 2.13 in \cite{Mattila}, applied to the subsets of $\Sigma_0(d,k)$ 
where $a \leq \theta_d(x) \leq b$.

Note that $\Sigma_0(d,k)$ has a unique tangent plane at each point. We claim that
\begin{equation} \label{3.30}
\Sigma_0(d,k) \text{ is rectifiable set,}
\end{equation}
The following argument is partially taken from \cite{Mattila}.

Set $E = \Sigma_0(d,k)$, and let $\varepsilon > 0$ be given.
For each $x \in E$, we can find an integer $j$ and a $d$-dimensional
vector space $V$ such that
\begin{equation} \label{3.31}
\dist(y,x+V) \leq \varepsilon |y-x| \ \text{ for } y \in E \cap B(x,2^{-j+2}).
\end{equation}
Since $G(d,n)$ is compact $V$ can be chosen to be a finite collection $\cV$
(that depends on $\varepsilon$). Set
\[
E(V,j) = \big\{ x\in E \, ; \, \eqref{3.31} \text{ holds} \big\}
\]
for each $V \in \cV$ and $j \geq 0$.
It is easy to see that the intersection of $E(V,j)$ with any ball 
of radius $2^{-j}$ is contained in a Lipschitz graph over $V$;
this shows that $E$ is contained in a countable union of Lipschitz
graphs (with small constants if needed). Since $\H^d\res E$
is locally finite, this completes our proof of Theorem \ref{t1.5}.

\section{Some examples}
\label{examples}  
The following simple examples illustrate some of the complications that may arise with measures satisfying 
the hypothesis of Theorem ref{t1.5}. We leave most of the computational details to the reader.  
Note that in case the examples live on a compact set
(like the unit cube), the doubling property \eqref{1.1} is only
satisfied for $r \leq 1$. A truly doubling measure would be easy to
construct from this one ( in fact take the constructed measure $\mu$ on the unit cube for example
and add all the translations of $\mu$ by vectors in $(4 \bZ)^n$).
Let
\begin{equation} \label{4.1}
J(x) = \int_0^1 \alpha(x,r) {dr \over r}
\end{equation}

\begin{example} \label{t4.1} {\it Limits of Dirac masses}.
Take $n=1$ and $\mu = \sum_{j \in \bZ} a_j \delta_{x_j}$,
where the $x_j$ are points of $\R$, $\delta_{x_j}$ is a Dirac mass at $x_j$,
and $a_j > 0$.
\end{example}

For the first example take $x_j = 2^{-j}$ and $a_j = 4^{-j}$.
We add $j \leq 0$ so that $\mu$ is doubling, even at the large scales.
Then $\Sigma_0(0) = \big\{ x_j \, ; \, j \in  \bZ \big\}$, and
$\Sigma = \Sigma_0(0) \cup \{ 0 \}$. In this case 
$J(x_j) = \int_{0}^1 \alpha(x_j, r) {dr \over r}$ is of the order of $j$
(all the radii $r < 2^{-j-1}$ yield $\alpha(x_j,r) = \alpha_0(x_j,r) = 0$), 
and $J(0) = \infty$.

We can let $0$ lie in a more significant part of $\Sigma$. Take the same example,
plunge $\R$ in $\R^3$ in the obvious way, and add to $\mu$ the restriction
of $\H^2$ to the plane $P$ orthogonal to $\R$; We obtain a doubling measure
$\mu_1$, essentially because $\mu(B(0,r))$ was of the order of $r^2$.
Now $\Sigma$ contains $P$, and $P \sm \{ 0 \}\subset \Sigma_0(2)$.

We may even construct $\mu$ so that $J$ is bounded 
(and in particular $J(0) < \infty$). 
Keep $x_j = 2^{-j}$ and $a_j = 4^{-j}$ for $j \leq 0$ (again, just to take care
of doubling for large scales), and for $j \geq 0$, choose the $x_j$ to be slowly
decreasing to $0$, and take $a_j = |x_j|-|x_{j+1}|$. In this case
\begin{equation} \label{4.2}
\mu(\overline B(0,x_j)) = |x_j|
\end{equation}
For instance, take $x_j = \log(j)^{-1}$ for $j$ large;
then
\[
a_j = |x_j|-|x_{j+1}| 
= {\log(j+1)^{-1}-\log(j)^{-1} \over \log(j) \log(j+1)}
\sim j^{-1} \log(j)^{-2} \sim |x_j|^2 e^{-1/|x_j|},
\]
which is much smaller than $|x_j|$. 
We add to $\mu$ the restriction to $(-\infty,0]$ of the Lebesgue measure, 
(or the image of $\mu$ by the symmetry with respect to the origin),
and we get a measure $\mu_2$, which is doubling and for which $J$ is bounded.

The main ingredient in the proof that $\mu_2$ is \eqref{4.1} 
(the measure of the atoms is essentially the same as the length of the holes, which means
that at the larger scales, $\mu$ looks a lot like the Lebesgue measure on the line). 
To show that $J$ is bounded note that since relative size of the gaps ${a_j \over |x_j|} = {|x_j|-|x_{j+1}| \over |x_j|}$ goes to $0$ 
 fast enough then the $\alpha$'s tend to 0 very fast
This provides an example where $\mu$ and $\H^d$ 
(here, with $d=0$) are mutually absolutely continuous on $\Sigma_0(d)$, 
but yet $\mu(B(0,1)) < \infty$ and $H^d(\Sigma_0(d) \cap B(0,1)) = \infty$.

\begin{example} \label{t4.2} {\it String of spheres}.
Take $n \geq 3$, pick a unit vector $e \in \R^n$
and a small $\varepsilon > 0$. Set
\begin{equation} \label{4.3}
\nu_\varepsilon(x) = \H^{n-1}|_{\d B(x,\varepsilon/10)}
\ \text{ for } x \in \R^n,
\end{equation}
and then 
\begin{equation} \label{4.4}
\mu = \varepsilon^{2-n}\sum_{j \in \bZ} \nu_{\varepsilon}(j\varepsilon e).
\end{equation}
We claim that $\mu$ is doubling and $J$ is bounded, with bounds that do not depend
on $\varepsilon$, that $\Sigma = \Sigma_0(n-1)$,
but yet the the density $\theta_{n-1}(x) = \varepsilon^{2-n}$ is arbitrarily
large, while $\mu(B(0,1))$ stays bounded.
\end{example}

\ms
We chose the coefficient $\varepsilon^{2-n}$ so that
$\mu(B(x,r)) \sim cr$ for $\varepsilon\;; r$ (where $c $
is a normalizing constant). Note that for $r > 100\varepsilon$,
$\alpha(x,r) = \alpha_1(x,r) \leq C \varepsilon/r$,
and for $r < 10^{-2}\varepsilon$, 
$\alpha(x,r) = \alpha_{n-1}(x,r) \leq r/ \varepsilon$.
We use spheres rather than balls to make sure that this happens
for every $x\in \Sigma$. Had we used balls, we would get that $J$ is small on
average, but $J(x) = \infty$ at points of the boundary.
We have to use the density $\theta_{n-1}(x)$, because $\Sigma = \Sigma_0(n-1)$.
At large scales, $\Sigma$ and $\mu$ look $1$-dimensional so the
density changes a lot.

To make the example more pathological
take a decreasing sequence $\{ x_j \}$ in $(0,\infty)$,
that converges slowly to $0$, for instance such that $x_j =\log(j)^{-1}$
for $j > 2$, set $a_j = |x_j|-|x_{j+1}|$, and consider
\begin{equation} \label{4.5}
\mu = \sum_j a_j^{2-n} [\nu_{a_j}(x_j e)+\nu_{a_j}(-x_j e)].
\end{equation}
In small balls $B(0,r)$, this looks more and more like a multiple of the 
Lebesgue measure on $\R^{n-1}$. We claim that $\mu$ is doubling, that $J$ is bounded,
and that $\Sigma = \Sigma_0(n-1) \sm \{ 0 \} $. But on the small sphere
centered on $x_j e$, the density $\theta_{n-1}(\cdot)$ is about $a_j^{2-n}$,  
which goes to $\infty$ rapidly.

\ms

The next example is similar, but we want small densities. We 
consider low-dimensional measures that look large-dimensional.

\begin{example} \label{t4.3} {\it Ocean of circles}.
Take $n \geq 2$ and $\varepsilon > 0$, and for $x\in \R^n$,
choose a circle $c_\varepsilon(x)$ centered at $x$ and with radius 
$10^{-1} \varepsilon$ and set $\nu_{\varepsilon}(x) = \H^1|_{c_\varepsilon(x)}$.
Then take
\begin{equation} \label{4.6}
\mu_\varepsilon = \sum_{x\in \varepsilon \bZ^{n}} \varepsilon^{n-1} \nu_{\varepsilon}(x).
\end{equation}
\end{example}

The measure $\mu$ is doubling, 
$J$ is bounded, 
$\Sigma = \Sigma_0(1)$, and $\mu$ is normalized so that
$\mu_\varepsilon(B(x,1))$ is roughly $1$ for all $x\in \Sigma$. 
All this holds with constants that do not depend on $\varepsilon$, and yet 
the density $\theta_1(y) = \varepsilon^{n-1}$ is as small as we want.

Here $\alpha(x,r) = \alpha_n(x,r) \leq C \varepsilon/r$ for $\varepsilon\ll r$,
$\alpha(x,r) = \alpha_1(x,r) \leq r/\varepsilon$ for $r\ll\varepsilon$.
The density $\theta_1$ changes a lot in the intermediate region where
$\varepsilon < r < 1$. A priori it is impossible to predict how wide this range is.

Of course it is easy to produce variants for which this happens all over the place,
with very small densities. For instance, pick a collection
of dyadic cubes $Q_j$, so that the $10Q_j$ are disjoint
(think about many small cubes spread out all over the place), denote by 
$r_j$ the sidelength of $Q_j$, choose very small dyadic numbers $\varepsilon_j << r_j$,
and set 
\[
\mu = \H^n|_{\R^n \sm \cup_j 2Q_j} + \sum_j \mu_{\varepsilon_j,Q_j},
\]
where 
\[
\mu_{\varepsilon_j,Q_j} = {2^n \varepsilon_j^{n-1} \over \H^1(c_1(0))} 
\sum_{x\in Q_j \cap \varepsilon_j \bZ^n}  \H^1|_{c_{\varepsilon_j}(x)},
\]
where we normalize $\mu_{\varepsilon_j,Q_j}$ so that 
$\mu_{\varepsilon_j,Q_j}(Q_j)$ is roughly $\H^n(2Q_j)$.
The additional coefficient ${2^n \over \H^1(c_1(0))}$ 
should make the verification of the doubling property a little easier,
because $\mu_{\varepsilon_j,Q_j}(2Q_j) = \H^{n}(2Q_j)$, so we
did not change the total mass, we just moved it a bit.

In this example $J$ is not bounded
because the $\H^n$ part of the measure has sharp edges.
This is not a major issue and can be solved by adding a 
1-dimensional smoothing edges with a droplet like profile.
Figure 1 depicts the intersection of the support of the modified measure
$\mu$ with the region between two vertical planes, near a cube $Q_j$
(that was made more rectangular for the sake of the picture).

\includegraphics[height=3cm]{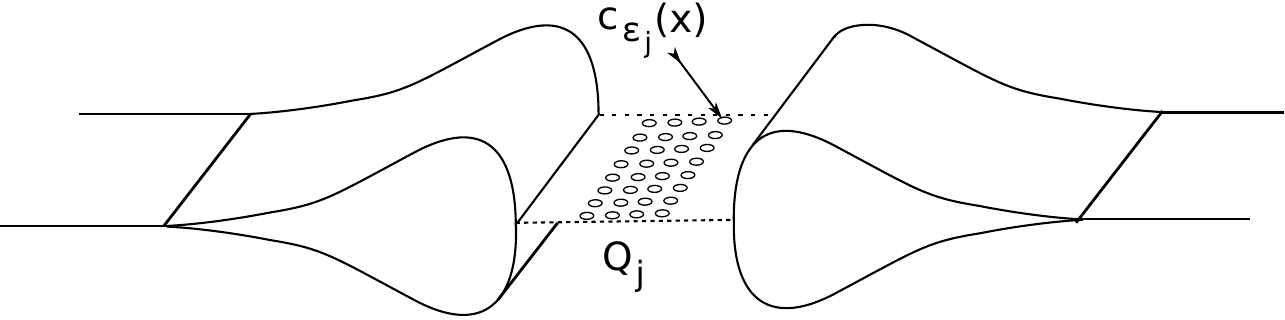}

\medskip
\centerline{{\bf Figure 1.} The support of $\mu$ between two vertical
planes and with a single $Q_j$.}
\bigskip
Notice that 
\begin{equation} \label{4.7}
\H^{1}(\Sigma_0(1))
= \sum_j \H^{1}(\Sigma_0(1) \cap 2Q_j)
= C \sum_j \varepsilon_j^{1-n} r_j^n = \infty
\end{equation}
if we choose the $\varepsilon_j$ small enough. So $\H^1$ and 
$\mu$ are mutually absolutely continuous on $\Sigma_0(1)$, 
but one is locally finite and the other one is not.

A similar construction with nested cubes $Q_j$ centered at the origin,
where one would use $\varepsilon_j$ on $Q_j \sm Q_{j+1}$, would
give an example where $\Sigma_0(1) = \Sigma \sm \{0\}$. An additional modification
could ensure that $0 \in \Sigma_0(1)$ (e.g. make $\Sigma$ thinner near a line on the $Q_j$, and
compensate by taking $\varepsilon_j$ even smaller).

The next example shows that $\Sigma \sm \Sigma_0$ may be
large, and even have a larger dimension than $\Sigma_0$.

\begin{example} \label{t4.4} {\it Snowflakes with jewelry}.
We can find $\mu$ in $\R^2$ such that $\Sigma$ is $\delta$-dimensional
for some $\delta > 1$, $J(x) < \infty$ for $\mu$-almost every
$x\in \Sigma$, and $\Sigma_0 = \Sigma_0(1)$.
\end{example}

Let $E \subset \R^2$ be a $\delta$-dimensional snowflake, and $\nu$ a
measure on $E$, such that 
\begin{equation} \label{4.8}
C^{-1} r^\delta \leq \nu(B(x,r)) \leq C r^\delta
\ \text{ for $x\in E$ and } 0 < r < 1.
\end{equation}
Such sets and measures are easy to construct, at least if $\delta > 1$ close to $1$.
Thus assume $\delta$ is close to 1.

For $j \geq 0$, choose a maximal subset $A_j$ of $E$ for which
$|x-y| \geq 100^{-j}$ for $x \neq y \in A_j$. Then, for each 
$x \in A_j$, choose $z \in B(x,100^{-j}/3)$ such that
$\dist(z_x,E) \geq 100^{-j}/10$. Denote by 
$c_j(x)$ the circle centered at $z_x$ and with radius $100^{-j}/20$
and set $\mu_{j,x}= (\H^1(c_j(x)))^{-1} \H^1|_{c_j(x)}$.
Let
\begin{equation} \label{4.9}
\mu = \sum_{j \geq 0} \sum_{x \in A_j} 100^{-j \eta} \mu_{j,z_x},
\end{equation}
where we may choose any constant $\eta > \delta$.
The geometric constants were chosen so that the $c_j(x)$
are far enough from $E$ and from each other (even for different
$j$'s). The condition $\eta > \delta$ is our way to make sure that $\mu$
is locally finite. One can check that
\begin{equation} \label{4.10}
C^{-1} r^{\eta} \leq \mu(B(x,r)) \leq C r^{\eta}
\ \text{ for $x\in E$ and } 0 < r < 1;
\end{equation}
the major contribution comes from the bounded number of
circles $c_j(x)$ for which $100^{-j}$ is roughly equal to $r$,
and then there is a convergent geometric series coming from
the contribution of larger indices $j$. The Ahlfors regular property of $\nu$
\eqref{4.8} is used to estimate the number of points of $A_j \cap B(x,2r)$.

We can deduce from \eqref{4.10} that $\mu$ is doubling.
For balls centered on $E$, we use \eqref{4.10}; for balls that
do not meet $E$, we use the fact that the $c_j(x)$ are far from each
other. For the intermediate balls, we reduce to the previous cases.

Now  $\Sigma = E \cap \Sigma_0(1)$, where $\Sigma_0(1)$
is the union of the added circles $c_j(x)$. The fact that 
$J(y) < \infty$ for $y\in c_j(x)$ is trivial, but of course we do not
get good average bounds $J(y)$. Theorem \ref{t1.5} gives
some (non uniform) control on $\Sigma_0(1)$, and nothing on
$E$. We claim that $J(x) = \infty$ on $E$.

This is another case where $\H^1(\Sigma_0(1))$ is not locally finite. This 
 seems to be needed in the construction (significantly smaller circles would not work, 
because they would be too far from each other and $\mu$ would not be doubling).

We can also add less circles so that $\H^1(\Sigma_0) < \infty$. In this case
we need to add $\nu$ to keep the measure doubling. That is,
choose for each $j$ a single point $x(j) \in A_j$, and set
\begin{equation} \label{4.11}
\mu = \nu + \sum_{j \geq 0} 100^{-j \delta} \mu_{j,x(j)}.
\end{equation}
This time we match $\mu(c_j(x))$ with $\nu(B(x,100^{-j})$, because
this way it is easy to show that $\mu$ is doubling. We are
really using $\nu$ for this. As before we claim that $J(x) = \infty$
on $E$, so $\Sigma_0 = \Sigma_0(1) = \bigcup_j c_j(x(j))$ and 
\begin{equation} \label{4.12}
\H^1(\Sigma_0) = \sum_{j \geq 0} \H^1(c_j(x(j))) < \infty.
\end{equation}
But we can still choose the $x(j)$ so that they are dense in $E$,
hence $E \cup \Sigma_0$ is also the support of $\mu|_{\Sigma_0}$.

\ms
\begin{example} \label{t4.5}
There is a measure $\mu$, which is doubling and uniformly rectifiable
of dimension $1$ (in the sense that the conclusion of Theorem \ref{t1.6} holds,
with $d=1$, for all $x\in \Sigma$ and $0 < r < \infty$), and even satisfies 
the Carleson condition of Theorem \ref{t1.8}, with a large constant $C_1$, 
but for which $\H^1(\Sigma \cap B(0,1)) = \infty$.
\end{example}

\begin{proof} 
The example requires additional notation.
We work in $\R^3$ using a square Cantor set.
Start in $\R^2$, with the four points $e_1 = (1,0)$, $e_2 = (0,1)$,
$e_3 = (-1,0)$ and $e_4 = (0,-1)$. Set $A = \{ e_1, e_2, e_3, e_4 \}$,
which we see as an alphabet with $4$ letters. Pick a number
$\rho \in (0,1/2)$, and consider the sets 
\begin{equation} \label{4.14} 
E_k = \big\{ x= \sum_{j=1}^k \rho^j \varepsilon_j \, ; \, 
(\varepsilon_1, \ldots, \varepsilon_n) \in A^k \big\}.
\end{equation}
Thus $E_k$ is composed of $4^k$ points, which are all different
because $\rho < 1/2$, and the sets $E_k$ converge to a cantor set
$E_\infty$. To each $E_k$ we associate the set
$$
F_k = E_k \times [\rho^{k+1},\rho^k] \subset \R^3,
$$
where the product is taken with intervals in the last (vertical) direction.
That is, $F_k$ is the union of $4^k$ parallel vertical segments.
We set
$$
\Sigma = \big(\{ 0 \} \times [1,\infty) \big)\cup \big(E_\infty \times \{ 0 \}\big) 
\cup \bigcup_{k \geq 1} F_k 
$$
(we added the first piece to get an infinite set, and the second one to get
a closed set). Thus $\Sigma$ looks like some sort of futuristic broom with a
long stick and many small hairs. See Figure 2 for a first approximation of $\Sigma$, 
with the three first sets $F_k$.

\centerline{
\includegraphics[height=4.5cm]{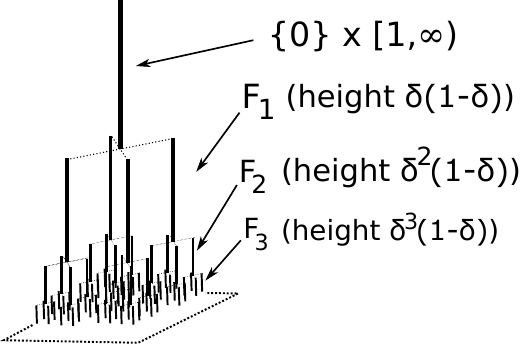}
} 

\medskip
\centerline{{\bf Figure 2.} Part of the broom $\Sigma$ (3 generations).}
\bigskip

On $\{ 0 \} \times [1,\infty)$, we put the Lebesgue measure.
On each of the $4^k$ segments of $F_k$, we put $4^{-k}$ times
the Lebesgue measure. And we put no additional mass on $E_\infty \times \{ 0 \}$.
This gives a measure $\mu$ whose support is $\Sigma$.
Notice that when we push forward $\mu$ onto the vertical axis, we get the
restriction of the Lebesgue measure to $[0,\infty)$. In particular,
\begin{equation} \label{4.15}
\mu(B(x,r)) \leq 2r \ \text{ for $x\in \Sigma$ and } r > 0.
\end{equation}

We need to evaluate the measure of a ball $B = B(x,r)$,
and we start when $x \in F_\infty = E_\infty \times \{ 0 \}$.
When restrict to $r < 1$ (otherwise, \eqref{4.15} will be enough).
Define the integer $k(r)$ by
\begin{equation} \label{4.16}
\rho^{k(r)+1} \leq r < \rho^{k(r)}.
\end{equation}
There is no contribution in $B$ from the $E_k$, $k < k(r)$, because 
$\dist(F_k,F_\infty) \geq \rho^{k+1}$.
For $k\ge k(r)$, let $N(r,k)$ be the maximal number of segments of
$F_k$ that meet a ball of radius $r$. This amounts to counting how many
points of $E_k$ lie in a ball of radius $r$, and we get at most $C4^{k-k(r)}$
(look at the expansion in \eqref{4.14}, and notice that only the first digits
are determined by $B$, up until $\rho^k \sim r$. There are no restrictions on the rest of the digits).
Here $C$ may be large if we took $\rho$ close to $1/2$, but we don't intend to
do that. We have to multiply $N(r,k)$ by the measure of each segment, which is
$4^{-k}(\rho^k-\rho^{k+1})$. We get that
\begin{equation} \label{4.17}
\mu(B(x,r)) \leq \sum_{k \geq k(r)}4^{-k}(\rho^k-\rho^{k+1}) N(r,k) 
\leq C \sum_{k \geq k(r)} 4^{-k(r)} \rho^k \leq C 4^{-k(r)} \rho^{k(r)}.
\end{equation}
Conversely, we claim that $B$ contains a full segment of $E_{k(r)+2}$,
and hence
\begin{equation} \label{4.18}
\mu(B(x,r)) \geq 4^{-k(r)-2} (\rho^{k+2}-\rho^{k+3}) 
\geq C^{-1} 4^{-k(r)} \rho^{k(r)},
\end{equation}
where this time $C$ may be large if $\rho$ is small.
Write $x \in F_\infty \simeq E_\infty$ as 
$x = \sum_{j \geq 1}\rho^j \varepsilon_j$, and set
$x_k = \sum_{j=1}^k \rho^j \varepsilon_j$ for $k \geq 1$.
Then $|x-x_k| \leq \rho^k$; applying this to $k = k(r)+2$
gives a point $x_{k(r)+2}\in E_{k(r)+2}$ such that
$|x-x_{k(r)+2}| \leq r/2$, and the claim follows.

The doubling property for balls $B(x,r)$ centered on $F_\infty$
easily follows from the estimates above. Notice that the proof also shows that
\begin{equation} \label{4.19}
\begin{aligned}
&\text{there is a line segment $L$ from some $F_k$, such that }
\\
&\hskip0.8cm
\H^1(L) \geq C^{-1}r \quad\text{ and } \quad\mu(L) \geq C^{-1} \mu(B).
\end{aligned}
\end{equation}

Now consider a ball $B(x,r)$ such that $x\in F_k$. Observe that
\begin{equation} \label{4.20}
r 4^{-k} \leq \mu(B(x,r)) \leq 2 r 4^{-k}
\ \text{ for } 0 < r \leq C_1^{-1} \rho^{k},
\end{equation}
just because in this case $B(x,r)$ does not contain anything else
than the line segment of $E_k$ that contains $x$. This gives
the doubling properties for balls of radius $r \leq (2C_1)^{-1} \rho^{k}$.
For the slightly larger radii, notice that
\begin{equation} \label{4.21}
C^{-1} \rho^k 4^{-k} \leq \mu(B(x,r)) \leq C \rho^k 4^{-k}
\ \text{ for } C_1^{-1} \rho^{k} \leq r \leq 4\rho^k,
\end{equation}
by the first part of \eqref{4.20} and \eqref{4.17}, which takes care of
$r \leq 2\rho^k$; for radii $r \geq 2\rho^k$ we just use 
\eqref{4.17} and \eqref{4.18}. So $\mu$ is doubling, and we see
that \eqref{4.19} remains valid when $x\in \Sigma \sm F_\infty$
and $0 < r \leq 1$.

Notice that \eqref{4.19} implies that $\Sigma$ intersection any ball centered in $\Sigma$
contains big line segments. In the case that $\mu$ is Ahlfors regular this implies uniform
rectifiability.
 To prove uniform rectifiability in this setting we need to produce
for very large pieces of bi-Lipschitz images, 
that is we should check that for each $\gamma > 0$,
we can find $N \geq 1$ such that for each ball $B$ centered on $\Sigma$, there
is a collection of at most $N$ segments $L_k$ of different $F_k$, such that
$\mu(B \sm \cup_k L_k) \leq \gamma \mu(B)$, and then check that
$\cup_k L_k$ is bi-Lipschitz-equivalent to a subset of $\R$. We leave
the verification to the reader, who may also use the Carleson
estimate below and Theorem \ref{t1.6} .

Now we want to check that the numbers $\alpha_1(x,r)$ satisfy the 
Carleson condition
\begin{equation} \label{4.22}
\int_{ B(x,2r)} \int_{ 0}^{2r} \alpha_1(y,t) {d\mu(y) dt \over t}
\leq C \mu(B(x,r))
\ \text{ for $x \in \Sigma$ and $r > 0$.} 
\end{equation}
for $x \in \Sigma$ and $r > 0$. This means that we should evaluate the functions
\begin{equation} \label{4.23}
J_r(y) = \int_{0}^r \alpha_1(y,t) {dt \over t}
\end{equation}
for $y \in \Sigma$ and $0 < r < 1$. 

For a single segment $L$ of $F_k$, we would get that since
$\mu$ is proportional to the Lebesgue measure on $L$,
$\alpha_1(y,t) = 0$ for $0 < r < \dist(y,\d L)$, and
$\alpha_1(y,t) \leq 2$ (trivially) for $r \geq \dist(y,\d L)$.
Although there may be other pieces of $\Sigma$ floating around,
but they do not come closer than $C^{-1} \rho^k$ from $L$,
so we get that
$$
\alpha_1(y,t) = 0 \text{ for $t < C^{-1} \dist(y,\d L)$ and
$\alpha_1(y,t) \leq 2$ otherwise.}
$$
Set $\delta(y) = \dist(y,\d L)$ when $L$ is the segment of
some $F_k$ that contains $y$; we are not interested in $y\in F_\infty$
here. We just showed that
\begin{equation} \label{4.24}
J_r(y) \leq  C \log_+\big({C r \over \delta(y)}\big)
\leq C + \log_+\big({r \over \delta(y)}\big)
\end{equation}
for $y\in \Sigma \sm F_\infty$.

Now let us fix $x \in \Sigma$ and $r >0$ and evaluate 
\begin{equation}\label{4.23A}
A(x,r) = \int_{ B(x,2r)} \int_{0}^{2r} \alpha_1(y,t) {d\mu(y) dt \over t}
= \int_{ B(x,2r)} J_r(y) d\mu(y).
\end{equation}
Cut $B(x,2r) \cap \Sigma \sm F_\infty$ into segments $L_{j,k} \subset F_k$.
Denote by $l_{j,k}$ the length of $L_{j,k}$; then by \eqref{4.24}
\begin{equation} \label{4.25}
\int_{L_{j,k}} J_r(y) d\mu(y) \leq C \mu(L_{j,k}) 
\Big(1+ \log_+\big({r \over l_{j,k}}\big)\Big)
\end{equation}
Here we have used the fact that 
$\log_+\big({r \over \delta(y)}\big) \leq \log_+\big({r \over l_{j,k}}\big)
+ \log_+\big({l_{j,k}\over \delta(y)}\big)$, and that the average of the second 
one is bounded.

Denote by $L^\ast_{j,k}$ the segment of $E_k$ that contains
$L_{j,k}$ and by $l^\ast_{j,k}$ its length. Let us check that
\begin{equation} \label{4.26}
\mu(L_{j,k}) \Big(1+ \log_+\big({r \over l_{j,k}}\big)\Big)
\leq C \mu(L^\ast_{j,k} \cap B(x,3r)) \Big(1+ \log_+\big({r \over l^\ast_{j,k}}\big)\Big).
\end{equation}
Set $l=l_{j,k}$ and  $l^\ast = l^\ast_{j,k}$. When $l< r$,  
it is enough to show that 
\begin{equation}\label{4.25A}
l \log_+({r \over l}\big) \leq C l^\ast \big(1+ \log_+({r \over l^\ast}\big)\big).
\end{equation}
Note that $l \log_+({r \over l^\ast}\big)$ is controlled by the right-hand side
and since 
$\log\big({l^\ast \over l}\big) \leq C {l^\ast \over l}$, \eqref{4.25A} holds in this case
When $l\geq r$, 
$ \log_+\big({r \over l}\big)=0$ thus \eqref{4.25A} holds trivially.
So \eqref{4.26} holds and \eqref{4.25} yields
\begin{equation} \label{4.27}
\int_{L_{j,k}} J_r(y) d\mu(y) \leq C \mu(L^\ast_{j,k} \cap B(x,3r)) 
\Big(1+ \log_+\big({r \over l^\ast_{j,k}}\big)\Big).
\end{equation}
Now we count how many indices $j$ may correspond to a given $k$.
Let $k(r)$ be as in \eqref{4.16}; for $k \leq k(r)$, there are 
at most $C$ line segments from $F_k$ that meet $B(0,2r)$, and 
\begin{eqnarray}  \label{4.28}
\sum_{k \leq k(r)} \sum_{j} \int_{L_{j,k}} J_r(y) d\mu(y)
&\leq& C \sum_{k \leq k(r)} \sum_{j} \mu(L^\ast_{j,k} \cap B(x,3r))
\nonumber \\
&\leq& C \mu(B(x,3r)) \leq C \mu(B(x,r))
\end{eqnarray}
by \eqref{4.27} and because the $L^\ast_{j,k}$ are disjoint.
For $k > k(r)$, we computed earlier that there are at most
$N(r,k) \leq C 4^{k-k(r)}$ segments $L^\ast_{j,k}$, so
by \eqref{4.27}, and since $l^\ast_{j,k} = \rho^{k} - \rho^{k+1}
\geq {1 \over 2} \rho^{k} \geq C^{-1} \rho^{k-k(r)} r$ 
(recall that $\rho^{k(r)} \sim r$) we have
\begin{align*}
\sum_{j} \int_{L_{j,k}} J_r(y) d\mu(y)
&\leq C \sum_{j} \mu(L^\ast_{j,k} \cap B(x,3r)) (1+ k - k(r))
\\&
\leq C 4^{k-k(r)} [\rho^k 4^{-k}] (1+ k - k(r))
= C (1+ k - k(r)) 4^{-k(r)} \rho^k.
\end{align*}
Here $[\rho^k 4^{-k}]$
accounts for the measure of a single $L^\ast_{j,k}$
We sum over $k$ and get that
\begin{equation} \label{4.29}
\sum_{k > k(r)} \sum_{j} \int_{L_{j,k}} J_r(y) d\mu(y)
\leq C 4^{-k(r)} \rho^{k(r)}.
\end{equation}
Recall from \eqref{4.19} that there is a single line segment $L \subset B(x,r)$ 
such that $\H^1(L) \geq C^{-1} r \geq C^{-1} \rho^{k(r)}$,
and for which $\mu(L) \geq C^{-1}\mu(B(x,r))$. Because of its length,
it comes from an $F_j$, $j \leq k(r) -C$, which means that
$$
\mu(B(x,r)) \geq \mu(L) = 4^{-j} \H^1(L) \geq C^{-1} 4^{-k(r)} \H^1(L)
\geq C^{-1} 4^{-k(r)} \rho^{k(r)}.
$$
Thus the sum in \eqref{4.29} is less than $C \mu(B(x,r))$; we add this to 
\eqref{4.28} and get that $A(x,r) \leq C \mu(B(x,r))$. This proves
the Carleson measure estimate \eqref{4.22}.

We want to check that for some choices of $\rho$,
$\H^1(\Sigma) = \infty$. The total length of $B(0,2) \cap \Sigma \sm F_\infty$ is
\begin{equation} \label{4.30}
\H^1(B(0,2) \cap \Sigma \sm F_\infty) \geq \sum_k \H^1(F_k) 
= \sum_k 4^k (\rho^k - \rho^{k+1}) = \infty
\end{equation}
as soon as $\rho \geq {1 \over 4}$. When $\rho = {1 \over 4}$, we already
get a nice additional unrectifiable limit set $F_\infty$, but with
finite length. When ${1 \over 4} < \rho < {1 \over 2}$, $F_\infty$
and then also $\Sigma$ have a dimension larger than $1$.

This completes the study of Example \ref{t4.5} and shows that Theorem \ref{t1.8} cannot be 
drastically improved.

 \end{proof}

\ms

\ms
Next we say a few words about the case when $n=d=1$,
and $\mu$ is a measure whose support is $\Sigma = \R$.
Then there is no special difficulty with the geometry of $\Sigma$,
but still we do not understand well the density properties of 
$\mu$. We only know of one interesting type of examples, 
namely the Riesz measures and their variants. The simplest Riesz products
are limits of measures $\mu_N = F_N dx$, where
\begin{equation} \label{4.13}
F_N(x) =  \prod_{k=1}^N \big(1+\alpha_k \cos(3^k x)\big),
\end{equation}
with coefficients $\alpha_k < 1$ to make sure that the finite products stay
positive. If  $\sum_k \alpha_k^2 < \infty$, the 
infinite products converges almost everywhere, and the weak limit
of the $\mu_N$ is absolutely continuous with respect to the Lebesgue measure.
When $\sum_k \alpha_k^2 = \infty$, on the contrary, the infinite product
tends to $0$ almost everywhere, and the $\mu_N$ converge weakly to a singular
measure whose support is still $\R$. [The existence of a weak limit is easy to obtain, 
because it is easy to check that $\int_0^1 d\mu_N = 2\pi$.]
See for example Section 7 of Chapter V of 
\cite{Zygmund} 

It does not seem so easy to evaluate the numbers $\alpha_1(x,r)$ for
the weak limit in question, but a first approximation suggests
that $\alpha_1(x,r)$ should behave like combination of the $\alpha_k$, for
$r$ comparable to $3^{-k}$ , so we expect that Riesz products will give
absolutely continuous measures precisely when 
$J(x) = \int_{0}^1 \alpha_1(x,r)^2 \frac{dr}{r}< \infty$ $\mu$ almost everywhere. 

The following examples are easier to compute. They can be viewed in the context of 
\cite{BR} and \cite{NRTV}.
Denote by ${\cal I}$ the collection of dyadic interval in $\R$, and for
$I \in {\cal I}$, denote by $h_I$ the (badly normalized) Haar function defined by
\[
h_I = (\1_{I_+} - \1_{I_-}),
\]
where $I_+$ and $I_-$ are the two halves of $I$, with $I_+$ on
the right. Also denote by ${\cal I}_k$ the set of intervals $I \in {\cal I}$
such that $|I| = 2^{-k}$; we restrict to $k \geq 0$.

\begin{example} \label{t4.6}
Let the coefficients $a_I$, $i \in {\cal I}$, be smaller than $1$.
Then let $d\mu_N = G_N dx$, where
\[
G_N = \prod_{k=0}^N \prod_{I \in {\cal I}_k} (1+ a_I h_I).
\]
The measures $\mu_N$ converge weakly to a measure $\mu$
\end{example}

Using the orthogonality
of the Haar functions we expand and get that $\int_I G_N = 1$
when $I \in {\cal I}_0$. If $\sum_I a_I^2 |I| < \infty$, for instance,
$\sum_I a_I h_I \in L^2$, and $G(x) = \lim_{N \to \infty} G_N(x)$
exists almost everywhere, and lies in $(0,\infty)$, because 
the series  $\sum_I a_I h_I(x)$ converges almost everywhere.
If the $a_I$ satisfy suitably normalized Carleson conditions, then
$\sum_I a_I h_I \in \BMO$, and if the Carleson norm is small enough,
$||\sum_I a_I h_I||_{BMO}$ is small too, hence $\sum_I a_I h_I$ is exponentially
integrable and $\mu$ is absolutely continuous on $\R$ and given by an
$A_\infty$ weight. See for instance \cite{GarciaCuerva} or \cite{Journe}.

Let us restrict our attention to the case when $a_I$ depends only on $|I|$, 
i.e., $a_I = a(k)$ for $I\in {I}_k$.
Then $G_N(x)$ converges almost everywhere to a nonzero limit if and only if 
$\sum_k a(k)^2 < \infty$, and then $\mu$ and Lebesgue measure are mutually
absolutely continuous, and $\mu\in A_\infty(dx)$ (one can reduce to
the case when $\sum_k a(k)^2$ is small by cutting te first terms of the product).
If $\sum_k a(k)^2 =\infty$ one can show that $\mu$ is singular with respect to the 
Lebesgue measure.

\section{Proof of the small constant theorem - Theorem \ref{t1.7}}
\label{proof2}

We prove Theorem \ref{t1.7} before Theorem \ref{t1.6}, 
because the proof is more direct as it does not involve stopping time arguments or a corona construction. 
We proceed as in Section~\ref{proof1}, except that we only use 
one value of $d \geq 1$, let the origin $x$ vary, and restrict our attention to 
a large piece of $\Sigma$ where $J(x)$ is small, where
\begin{equation} \label{5.1}
J(x) = \int_{0}^1 \alpha_d(x,r) {dr \over r}.
\end{equation}

We work with a fixed integer $d\in [1,n]$ and
some times we drop it from our notation. For the moment, we just consider any
$x\in \Sigma$ such that $J(x) < \infty$, and we review some of the results
of the previous sections.

As before, we discretize the numbers $\alpha(x,r)$ and $J(x)$.
For $x\in \Sigma$ and $k \geq 0$, we define 
\begin{equation} \label{5.2}
\alpha_k(x) = \inf \big\{ \alpha_d(x,r) \, ; \, r \in [2^{-k-1},2^{-k}] \big\},
\end{equation}
and choose $r_k = r_k(x) \in [2^{-k-1},2^{-k}]$ such that
\begin{equation} \label{5.3}
\alpha_d(x,r_k(x)) \leq 2 \alpha_k(x).
\end{equation}
Notice that for $k_0 \geq 0$,
\begin{eqnarray} \label{5.4}
\sum_{k \geq k_0} \alpha_d(x,r_k(x))
&\leq& 2 \sum_{k \geq k_0} \alpha_k(x)
\leq 2 \sum_{k \geq k_0} \fint_{[2^{-k-1},2^{-k}]} \alpha_d(x,r)\,dr
\nonumber\\
&\leq& 2 \int_{0}^{2^{-k_0}} \alpha_d(x,r) {dr \over r} \leq 2 J(x).
\end{eqnarray}
Associated to the pair $(x,r_k)$, we have $d$-planes
\begin{equation} \label{5.5}
V_k(x) = V_d(x,r_k(x)),\quad  W_k(x) = x + V_k(x),
\end{equation}
(defined as in \eqref{2.17}), and also $W_k^\ast(x)$,
which is the vector space of dimension $d$ parallel to
$V_k(x)$ and $W_k(x)$.

For $x\in\Sigma$ with 
$J(x)$ is finite \eqref{5.4} ensures that 
$\alpha_d(x,r_k)$ tends to $0$. By the proof of 
Lemma \ref{t3.1} the dimension $d(k)$ defined near \eqref{3.3}
is equal to $d$ for $k$ large; hence $x \in \Sigma_0(d)$, and the 
conclusions of Theorem \ref{t1.5} hold for such $x$.

In particular, there is a density $\theta_d(x)$, defined by \eqref{1.15},
and such that by \eqref{3.17} and \eqref{3.14}
\begin{equation} \label{5.6}
\theta_d(x) = \lim_{r \to 0} \theta_d(x,r)
= \lim_{k \to \infty} \theta_d^\ast(x,r_k)
= \lim_{k \to \infty} r_k^{-d} c_{d}(x,r_k) \mu(B(x,r_k)).
\end{equation}
 In addition, iterations of \eqref{3.15}
show that
\begin{equation} \label{5.7}
\Big| \log\Big({\theta_d^\ast(x,r_k) \over \theta_d(x)}\Big)\Big|
\leq \sum_{l \geq k} \Big| \log\Big({\theta_d^\ast(x,r_l) 
\over \theta_d^\ast(x,r_{l+1})}\Big)\Big|
\leq C \sum_{l \geq k} \alpha_d(x,r_l).
\end{equation}

Also recall from \eqref{3.23} that $\Sigma$ has a tangent plane 
$W(x) = x + W^\ast(x)$ at $x$, where $W^\ast(x)$ is the limit of the $W_k^\ast(x)$;
we even know by \eqref{3.22} that
\begin{equation} \label{5.8}
\delta(W_k^\ast(x), W^\ast(x)) \leq C \sum_{l \geq k} \alpha_d(x,r_k);
\end{equation}
see the definition \eqref{3.20}.

\ms
We are now ready to start the proof of Theorem \ref{t1.7}.
By translation and dilation invariance, we may assume that the
ball $B(x,r)$ in the statement is $B = B(0,1/2)$. Then the main assumption
(namely, \eqref{1.19}) is that
\begin{equation} \label{5.9}
\int_{B(0,1)} \int_{0}^1 \alpha_d(x,r) {dr d\mu(x)\over r} 
= \int_{ B(0,1)} J(x) d\mu(x) \leq C_1 \mu(B),
\end{equation}
where exceptionally $C_1$ is a very small constant that we can choose at the
end of the argument, in particular in terms of the small constant $\gamma$
for which we want \eqref{1.20} and \eqref{1.23} to hold.

Note that the Borel set $A \subset B$ defined by
\begin{equation} \label{5.10}
A = \big\{ x\in \Sigma \cap B \, ; \, J(x) \leq \gamma^{-1} C_1 \big\}
\end{equation}
satisfies \eqref{1.20}. In fact by \eqref{5.9} we have
\begin{equation} \label{5.11}
\mu(B \sm A) \leq {\gamma \over C} \int_{x\in B} J(x) d\mu(x)
\leq \gamma \mu(B).
\end{equation}
 It remains to see that $A$ is
contained in $\gamma$-Lipschitz graph, and that \eqref{1.23} holds.

We may as well assume that $\gamma < \CD^{-4}$, and then
\eqref{5.11} also implies that there is 
$x_0 \in A \cap B(0,2^{-5})$. Then we choose an initial radius
$r_0 \in ({9 \over 10},{95\over 100})$ such that
\begin{equation} \label{5.12}
\alpha_d(x_0,r_0) \leq 10 \int_{90 \over 100}^{95\over 100} \alpha_d(x_0,r) dr
\leq 20 J(x_0) \leq 20 \gamma^{-1} C_1
\end{equation} 
Notice that for $x\in B$
$|x-x_0| \leq 1/2 + |x_0| \leq \frac{2}{3} r_0$ because $r_0 \geq 9/10$
and $x_0 \in B(0,2^{-5})$, thus $B\subset B(x_0,\frac{2}{3}r_0)$.

Set $P = W_0^\ast(x_0)$ (the vector $d$-plane parallel to $W_0(x_0)$;
see near \eqref{5.5}), and denote by $\pi$ the orthogonal projection
onto $P$. Also set $\pi^\perp = I - \pi$, the orthogonal projection
onto the orthogonal complement $P^\perp$.
We want to show that $A$ is contained in a $\gamma$-Lipschitz
graph over $P$, and for this it is enough to show that
\begin{equation} \label{5.13}
|\pi^\perp(x) - \pi^\perp(y)| \leq {\gamma \over 2} |x-y|
\ \text{ for } x,y \in A.
\end{equation}

Let $x, y \in A$ be given, and let us first assume that
$|x-y| \geq 2^{-10}$.
In this case, Lemma \ref{t2.6}, applied to the ball $B(x_0,r_0)$, says that
$x$ and $y$ both lie within $C \alpha_d(x_0,r_0)^\eta$ from
$W_d(x_0,r_0) = W_0(x_0)$. Recall that $|x-x_0|\le \frac{2}{3} r_0$ and  $|y-x_0|\le \frac{2}{3} r_0$,
This yields
\[
|\pi^\perp(x) - \pi^\perp(y)| \leq C \alpha_d(x_0,r_0)^\eta
\leq C (\gamma^{-1} C_1)^{\eta},
\]
and \eqref{5.13} follows if $C_1$ is small enough, depending on $\gamma$.
 Now suppose that $|x-y| \leq 2^{-10}$, and let $k$ be 
such that $2^{-k-3} \leq |x-y| < 2^{-k-2}$; then $k \geq 7$.
By Lemma \ref{t2.6}, applied to the ball $B(x,r_k(x))$, 
\begin{eqnarray} \label{5.14}
&&\dist(x,W_d(x,r_k(x))) + \dist(y,W_d(x,r_k(x)))
\leq C r_k(x) \alpha_d(x,r_k(x))^\eta 
\nonumber\\
&&\hskip7.5cm\leq C 2^{-k} (\gamma^{-1} C_1)^{\eta}.
\end{eqnarray}
With our new notation, $W_d(x,r_k(x))$ is the same as $W_k(x)$, and
the vector space parallel to $W_k(x)$ is $W_k^\ast(x)$.
Denote by $\pi_k^\perp$ the orthogonal projection
on $W_k^\ast(x)^\perp$; then \eqref{5.14} says that
\begin{equation} \label{5.15}
|\pi_k^\perp(x) - \pi_k^\perp(y)| \leq C 2^{-k} (\gamma^{-1} C_1)^{\eta}.
\end{equation}
By \eqref{5.8} (or more directly \eqref{3.22}),
\begin{equation} \label{5.16}
\delta(W_k^\ast(x), W^\ast_7(x)) \leq C \sum_{l \geq 7} \alpha_d(x,r_k)
\leq C J(x) \leq C \gamma^{-2} C_1.
\end{equation}
Finally, applying Lemma \ref{t2.8} to the ball
$B(x_0,r_0)$ and with $(x,r_7(x))$ in the role of $(y,t)$, we have
that
\begin{equation} \label{5.17}
\delta(W^\ast_7(x), W_0^\ast) \leq C \alpha_d(x_0,r_0) + C \alpha_d(x,7)
\leq C \gamma^{-2} C_1
\end{equation}
(the assumption \eqref{2.34} holds because by our choices of $x_0$, $r_0$, $r_7$ and because for $x\in A \subset B = B(0,1/2)$). 
Then \eqref{5.15} combined with \eqref{5.16}, \eqref{5.17}, and the definition \eqref{3.20} implies that
\begin{eqnarray} \label{5.18}
|\pi^\perp(x) - \pi^\perp(y)| &\leq&
|\pi_k^\perp(x) - \pi_k^\perp(y)| + |x-y| \, \|\pi^\perp - \pi_k^\perp\|
\nonumber\\
&\leq&
|\pi_k^\perp(x) - \pi_k^\perp(y)| + |x-y| \delta(W_k^\ast(x), W_0^\ast) 
\nonumber\\
 &\leq& C 2^{-k} (\gamma^{-1} C_1)^{\eta} + C \gamma^{-2} C_1 |x-y|.
\end{eqnarray}
The desired
estimate \eqref{5.13}  follows from this, because $2^{-k-3} \leq |x-y|$
provided $C_1$ is small enough depending on $C_\delta$, $d$ and $\gamma$ So $A$ is contained in a $\gamma$-Lipschitz
graph.

To prove \eqref{1.23} we estimate densities.
We start from \eqref{5.6} and \eqref{5.7}, which we apply with $k=5$.
This yields by \eqref{5.4} and \eqref{5.10} that
\begin{equation} \label{5.19}
\Big| \log\Big({\theta_d(x) \over \theta_d^\ast(x,r_5(x))}\Big)\Big|
\leq C \sum_{l \geq 5} \alpha_d(x,r_l(x)) \leq C J(x) \leq C \gamma^{-1} C_1.
\end{equation}
 Then we apply 
Lemma \ref{t2.12} to the ball $B(x_0,r_0)$, the point $y=x$,
and the radius $ar_0 = r_5(x)$. Lemma \ref{t2.6} says that 
$\dist(x,W_d(x_0,r_0)) \leq C r_0 \alpha_d(x_0,r_0)^\eta$
(for some $C \geq 0$ and $\eta \in (0,1/2)$ that depend on $\CD$),
so we can take $\wt\delta = C \alpha_d(x_0,r_0)^\eta$ to apply 
Lemma \ref{t2.12}. By \eqref{2.68},
\[
\Big|{\mu(B(x, r_5(x))) \over \mu(B(x_0,r_0))} - c_d(x_0,r_0) 
\big({r_5(x) \over r_0}\big)^d \Big| 
\leq C \wt\delta + 2 \wt\delta^{-1} \alpha_d(x_0,r_0)
\leq C \alpha_d(x_0,r_0)^\eta.
\]
Since $c_d(x_0,r_0) \geq 1$ and $r_0 \leq 2^6 r_5(x)$,
we may divide, take a logarithm, observe that
$\log a \sim a-1$ for $a$ close to $1$, and get that
\[
\Big|\log\Big({\mu(B(x, r_5(x))) r_0^d \over c_d(x_0,r_0) \mu(B(x_0,r_0))r_5(x)^d } 
\Big)\Big| 
\leq C \alpha_d(x_0,r_0)^\eta.
\]
The same computation, performed with $B = B(0,1/2)$, yields
\[
\Big|{\mu(B) \over \mu(B(x_0,r_0))} - c_d(x_0,r_0) 
\big({1 \over 2 r_0}\big)^d \Big| 
\leq C \alpha_d(x_0,r_0)^\eta
\]
and then
\[
\Big|\log\Big({\mu(B) 2^d r_0^d \over c_d(x_0,r_0) \mu(B(x_0,r_0))} 
\Big)\Big| 
\leq C \alpha_d(x_0,r_0)^\eta.
\]
[To be fair, Lemma \ref{t2.12} was only stated when $a < 1/3$; 
here we take $a$ a little larger than $1/2$, but a center very close to $x_0$, 
and the same proof applies.] We combine the previous estimates and get that
\begin{equation} \label{5.20}
\Big|\log\Big({\mu(B(x, r_5(x))) \over \mu(B) 2^d r_5(x)^d } 
\Big)\Big|  \leq C \alpha_d(x_0,r_0)^\eta \leq C (\gamma^{-1} C_1)^{\eta}.
\end{equation}
Recall from \eqref{5.6} that $\theta_d^\ast(x,r_5(x)) 
= r_5(x)^{-d} c_d(x,r_5(x)) \mu(B(x,r_5(x)))$; then by \eqref{5.19} 
and \eqref{5.20}
\begin{equation} \label{5.21}
\Big|\log\Big({\theta_d(x) \over c_d(x,r_5(x)) \mu(B) 2^d} \Big)\Big|  
\leq C (\gamma^{-1} C_1)^{\eta}.
\end{equation}
Finally notice that 
$\dist(x,W_d(x,r_5(x)) \leq C r_5(x) \alpha_d(x,r_5(x))^\eta
\leq C r_5(x) (\gamma^{-1} C_1)^{\eta}$ 
by Lemma~\ref{t2.6} and \eqref{5.19} again, so
\eqref{2.14} yields $|\log c_d(x,r_5(x))| \leq C (\gamma^{-1} C_1)^{\eta}$.
Thus 
\begin{equation} \label{5.22}
\Big|\log\Big({\theta_d(x) \over 2^d\mu(B)} \Big)\Big| \leq C (\gamma^{-1} C_1)^{\eta}.
\end{equation}
This holds for every $x\in A$. With the notation of Theorem \ref{t1.5},
this implies that $A$ is contained in one, or at most two, of the sets $\Sigma_0(d,k)$,
and in particular $\H^d$ and $\mu$ are equivalent on $A$. In addition,
$\mu = \theta_d \H^d\res A$, and \eqref{1.23} follows easily (recall that
$r = 1/2$ in \eqref{1.23}).

This completes our proof of Theorem \ref{t1.7}.

\begin{remark} \label{t5.1}
At the scale of $B$, $\Sigma$ is flat, and does not have big holes.
This comes from Lemma \ref{t2.7}, for instance. In fact, for $x\in A$,
$\Sigma$ never has hole  of size larger than
$C\alpha_d(x,r_k(x))^{1\over d+1}$ in $B(x,r_k(x))$ 
In the case when $A=\Sigma \cap B$, 
then $A$ is Reifenberg-flat, 
and equal to a Lipschitz graph inside a slightly 
smaller ball.

The fact that $\Sigma$ does not have big holes in $B$ would also
help us do the necessary patching argument if we replaced $2r$ in
\eqref{1.19} with $\lambda r$, for some other constant $\lambda > 1$.

\end{remark}

\section{Proof of the main theorem - Theorem \ref{t1.6}}
\label{proof3}

\subsection{Dyadic cubes}
\label{cubes}
The proof of Theorem \ref{t1.6} relies on the same ingredients as the proof
of Theorem \ref{t1.7}, but since the function $J$ is only bounded
(and not necessarily small) on the set of interest, we cannot expect to control the angles
and densities as well as before, and there will be a few scales, depending on $x$,
where we lose information. This requires a stopping time argument, and
we use the standard machinery that is a 
``corona construction''. Since the statement involves very big pieces of
bi-Lipschitz images, we will need
to cut out thin slices away from $\Sigma$ to separate different pieces.
Dyadic pseudo-cubes with small boundaries will be good for that also.

The main focus of the proof is to control the geometry. The density estimates
required to prove \eqref{1.21} are straightforward.

Let $\mu$ be a general doubling measure with no
no atoms. We first review a construction from \cite{Christ-T(b)}, which is
precisely adapted to doubling measures; we change the notation a little to suit our needs. 

In what follows, $C$ denotes a constant that depends only on the doubling
constant $\CD$ from \eqref{1.1}. 

There exists a collection of set sets $\Delta_j$, $j\in \bZ$,
with the following properties. Each set $\Delta_j$ is a set of
Borel subsets of $\Sigma$, and 
\begin{equation} \label{6.1}
\Sigma  \text{ is the disjoint union of the sets } Q, Q\in \Delta_j. 
\end{equation}
We call these sets ``cubes'', by analogy with the usual dyadic
cubes, but they may not be as smooth, and they are subsets of $\Sigma$.
Each cube $Q \in \Delta_j$ has a ``center'' $c_Q \in Q$, and
\begin{equation} \label{6.2}
\Sigma \cap B(c_Q, C^{-1} 2^{-j}) \subset Q \subset \Sigma \cap B(c_Q,2^j).
\end{equation}
Moreover the cubes are nested, i.e.,
\begin{equation} \label{6.3}
\text{For $Q\in \Delta_j$ and $k \leq j$, there is a unique
cube $R \in \Delta_k$ that meets $R$.}
\end{equation}
Then $Q \subset R$, because $\Sigma$ is the disjoint union of the
cubes $R \in \Delta_k$. When $j=k+1$, we often call $R$ a parent of $Q$,
and $Q$ a child of $R$. In general, $R$ is an ancestor of $Q$ and $Q$
is a descendant of $R$. Note that a cube $R$ may have exactly one child, in this case 
we see $R$ as belonging to two different generations. One could avoid this by skipping generations
but is is not worth the trouble. For additional details see \cite{David88}.

 For our needs, it would be enough to define the sets
$\Delta_j$ only for $j \geq j_0$, where $j_0$ depends on the size of the
ball $B(0,r)$ that we consider.
We set $\Delta = \cup_j \Delta_j$ (our set of cubes), and to each
cube $Q$ we associate the approximate diameter $d(Q)$, where
\begin{equation} \label{6.4}
d(Q) = 2^{-j} \ \text{ when } Q \in \Delta_j. 
\end{equation}
We also define the enlarged set
\begin{equation} \label{6.5}
\lambda Q = \big\{ x\in \Sigma \, ; \, \dist(x,Q) \leq (\lambda-1)d(Q) \big\}
\ \text{ for } \lambda > 1
\end{equation}
and the reduced set
\begin{equation} \label{6.6}
\lambda Q = \big\{ x\in Q \, ; \, \dist(x,\Sigma \sm Q) \geq (1-\lambda)d(Q) \big\}
\ \text{ for } \lambda < 1.
\end{equation}
The important feature of our cubes is that they have ``small boundaries'', in the following sense. 
There is a small constant $\kappa > 0$, which also depends only on $\CD$, 
such that
\begin{equation} \label{6.7}
\mu(Q \sm \lambda Q) \leq C (1-\lambda)^\kappa \mu(Q)
\end{equation}
when $Q\in \Delta$ and $0 < \lambda < 1$ (but this is only meaningful
when $\lambda$ is close to $1$). Since by \eqref{6.1} and \eqref{6.2},
$2Q$ can only intersect $C$ cubes of the same generation as $Q$,
and by \eqref{1.1} all these cubes have roughly the same size,
\eqref{6.7} also implies the following control on the exterior part:
\begin{equation} \label{6.8}
\mu(\lambda Q \sm Q) \leq C (\lambda-1)^\kappa \mu(Q)
\end{equation}
for $Q\in \Delta$ and $1 < \lambda < 2$.

See \cite{Christ-T(b)} for details on the existence of $\Delta$.
The main issue is to mix the small boundary condition and the 
hierarchical structure. The fact that the constants above do not depend
on $n$, but just on $\CD$ comes from the proof, and is not surprising.

\ms
Our proof will use the notion of \textit{semi-adjacent cubes}.

\begin{definition} \label{t6.1}
Let $\lambda > 1$ be a large constant to be chosen later,
depending on $\CD$ through the constants in \eqref{6.2}-\eqref{6.7}.
We say that two cubes $Q, R \in \Delta$ are semi-adjacent,
and we write $Q \sim R$, when
\begin{equation} \label{6.9}
Q \subset \lambda R  \ \text{ and } \  R \subset \lambda Q. 
\end{equation}
\end{definition}

\ms
Notice that this implies that $d(Q)$ and $d(R)$ are comparable,
and also (by \eqref{1.1}) that $\mu(Q)$ and $\mu(R)$ are comparable, 
with constants that depend on $\lambda$.  We do not always mention that 
dependence, because $\lambda$ will be chosen first (so that some geometrical 
constraints are satisfied). We choose $\lambda$ large enough so that
$Q\sim R$ when $Q$ is a child of $R$ and when $Q$ and $R$ are of the 
same generation and $\dist(Q,R) \leq d(Q)$.
We also need a large number $\lambda^\ast > 0$ (larger than $\lambda$,
to be chosen after $\lambda$),
which we use to associate an $\alpha$-number to each cube $Q\in \Delta$. 
That is, we set
\begin{equation} \label{6.10}
\alpha(Q) = \inf\big\{ \alpha_d(x,r) \, ; \, x\in Q \text{ and } 
\lambda^\ast d(Q) \leq r \leq 2 \lambda^\ast d(Q) \big\},
\end{equation}
then select a pair $(x_Q,r_Q)$, with $x_Q \in Q$, $\lambda^\ast d(Q) \leq r_Q 
\leq 2\lambda^\ast d(Q)$, and
\begin{equation} \label{6.11}
\alpha_d(x_Q,r_Q) \leq 2 \alpha(Q),
\end{equation}
then set
\begin{equation} \label{6.12}
W(Q) = W_d(x_Q,r_Q),
\end{equation}
and denote by $W^\ast(Q)$ the $d$- vector space parallel to $W(Q)$.

\begin{lemma} \label{t6.2}
Using the notation above, if $\lambda^\ast \geq 10\lambda$, then
\begin{equation} \label{6.13}
\dist(y,W(Q)) \leq  C \lambda^\ast \alpha(Q)^\eta d(Q)
\ \text{ for $Q \in \Delta$ and } y\in \lambda Q.
\end{equation}
Moreover when $Q$ and $R \in \Delta$ are semi-adjacent cubes,
\begin{equation} \label{6.14}
\dist(z,W(Q)) \leq C \lambda^\ast (\alpha(Q)+\alpha(R)) (d(Q)+d(R))
\end{equation}
for $z\in W(R) \cap B(x_Q, \lambda^\ast(d(Q)+d(R)))$, and
\begin{equation} \label{6.15}
\dist(z,W(R)) \leq C \lambda^\ast (\alpha(Q)+\alpha(R)) (d(Q)+d(R))
\end{equation} 
for $z\in W(Q) \cap B(x_Q, \lambda^\ast(d(Q)+d(R)))$.
\end{lemma}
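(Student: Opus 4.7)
All three inequalities reduce to applying the compactness lemmas of Section~\ref{prelim} to the defining pairs $(x_Q,r_Q)$ and $(x_R,r_R)$ from \eqref{6.10}--\eqref{6.11}, once one checks that the scale separation $\lambda^{\ast}\ge 10\lambda$ makes the hypotheses fit.

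For \eqref{6.13}, I apply Lemma~\ref{t2.6} to the ball $B(x_Q,r_Q)$. Any $y\in\lambda Q$ lies within $(\lambda-1)d(Q)+2d(Q)=(\lambda+1)d(Q)$ of $x_Q$ by the definitions \eqref{6.2} and \eqref{6.5}, and since $r_Q\ge\lambda^{\ast}d(Q)\ge 10\lambda d(Q)$, we have $|y-x_Q|\le 2r_Q/3$. Lemma~\ref{t2.6} then yields $\dist(y,W(Q))\le C\alpha_d(x_Q,r_Q)^{\eta}r_Q$, and combining with \eqref{6.11} and $r_Q\le 2\lambda^{\ast}d(Q)$ gives \eqref{6.13}.

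For \eqref{6.14} and \eqref{6.15}, I invoke Lemma~\ref{t2.8} applied to the pair $(x_Q,r_Q)$, $(x_R,r_R)$. By semi-adjacency (Definition~\ref{t6.1}) and the remark immediately following it, $d(Q)$ and $d(R)$, hence $r_Q$ and $r_R$, are comparable within a constant $C(\lambda)$, and $|x_Q-x_R|\le(\lambda+1)\min(d(Q),d(R))$. Without loss of generality $r_R\le r_Q$ (Lemma~\ref{t2.8} yields both inequalities \eqref{2.35} and \eqref{2.36} symmetrically, so the opposite case is handled by relabeling); then $d(R)\le 2d(Q)$, and one may take $a=1/(4\lambda)$ so that $ar\le t\le r$. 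The key hypothesis $|x-y|+t/2<r$ becomes
\[
|x_Q-x_R|+r_R/2\le(\lambda+1)d(Q)+r_Q/2<r_Q,
\]
where the last step uses $r_Q\ge\lambda^{\ast}d(Q)\ge 2(\lambda+1)d(Q)$, which is exactly where $\lambda^{\ast}\ge 10\lambda$ is used. Lemma~\ref{t2.8} then bounds $\dist(z,W(Q))$ on $W(R)\cap B(x_Q,2r_Q)$, and symmetrically $\dist(z,W(R))$ on $W(Q)\cap B(x_Q,2r_Q)$, by $C(\alpha(Q)+\alpha(R))r_Q\le C\lambda^{\ast}(\alpha(Q)+\alpha(R))(d(Q)+d(R))$.

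The one point requiring real care is a mild mismatch in ball sizes: \eqref{6.14}--\eqref{6.15} ask for $z\in B(x_Q,\lambda^{\ast}(d(Q)+d(R)))$, which can exceed $B(x_Q,2r_Q)$ by a bounded factor (since $d(R)\le 2d(Q)$). But $z\mapsto\dist(z,W(Q))$ is affine on $W(R)$ (and similarly with $Q,R$ swapped), so the bound on the smaller ball extends to the larger one at the cost of a multiplicative constant, via Lemma~\ref{t2.9} --- the same mechanism that appears near the end of the proof of Lemma~\ref{t2.8}. The extra constant is harmless and absorbed into $C$; this is the only step needing more than routine unpacking of the definitions.
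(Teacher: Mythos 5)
Your proposal is correct and follows essentially the same route as the paper: \eqref{6.13} via Lemma~\ref{t2.6} applied to $B(x_Q,r_Q)$ after checking $\lambda Q\subset B(x_Q,\tfrac23 r_Q)$, and \eqref{6.14}--\eqref{6.15} via Lemma~\ref{t2.8} applied to the pair $(x_Q,r_Q)$, $(x_R,r_R)$, with the final Lemma~\ref{t2.9}-type extension to the possibly larger ball $B(x_Q,\lambda^\ast(d(Q)+d(R)))$. You are, if anything, more explicit than the paper in verifying the hypothesis \eqref{2.34} and in flagging the ball-size mismatch.
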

\ms\begin{proof}  
By Lemma \ref{t2.6} for $y\in \Sigma \cap B(x_Q, \frac23 r_Q)$
\begin{equation} \label{6.16}
\dist(y,W(Q)) \leq C \alpha_d(x_Q,r_Q)^\eta r_Q
\leq C \lambda^\ast \alpha(Q)^\eta d(Q),
\end{equation}
and $\lambda Q \subset \Sigma \cap B(x_Q, \frac23 r_Q)$ because
$\lambda^\ast \geq 10 \lambda$, and \eqref{6.13} follows. 
Similarly, suppose for instance that $r_Q \geq r_R$; notice that
$\dist(x_R,Q) \leq \lambda d(Q)$ because $R \subset\lambda Q$,
then $|x_Q-x_R| \leq (\lambda+2) d(Q)$, then
Lemma \ref{t2.8} applies because $|x_Q-x_R| + {r_R \over 2} < r_Q$,
and we get that\begin{equation} \label{6.17}
\dist(z,W(Q)) \leq C (\alpha(Q)+\alpha(R)) r_Q
\end{equation}
for $z\in W(R) \cap B(x_Q, 2r_Q)$, and 
\begin{equation} \label{6.18}
\dist(z,W(R)) \leq C (\alpha(Q)+\alpha(R)) r_Q
\end{equation}
for $z\in W(Q) \cap B(x_Q, 2r_Q)$. By elementary geometry,
these estimates also hold with $B(x_Q, 2r_Q)$ replaced by the possibly slightly 
larger ball $B(x_Q, \lambda^\ast(d(Q)+d(R)))$, or 
$B(x_Q, \lambda^\ast(d(Q)+d(R)))$ (if we need to exchange $Q$ and $R$);
so \eqref{6.14} and \eqref{6.15} follow.
\end{proof}

\subsection{Stopping time regions and Lipschitz graphs}
\label{stop}

For this part of the proof, we rely on definitions and a construction from
\cite{DS} or \cite{of-and-on}.

\begin{definition}\label{t6.3}
A {\it stopping-time region} is a set $S\subseteq \Delta$ of cubes that satisfies
the following properties. First, $S$ contains a maximal cube $Q(S)$, i.e., a
cube $Q(S) \in S$ such that
\begin{equation} \label{6.19}
Q \subset Q(S) \ \text{ for each } Q \in S.
\end{equation}
But also, $S$ is {\it coherent}. That is, if $Q\in S$ is properly contained in $Q(S)$, 
then $R\in S$ for all $Q\subseteq R\subseteq Q(S)$, and all the siblings of 
$Q$ (i.e., the children of the parent of $Q$) are also in $S$.

We say that $S$ is a {\it stopping-time region with constant $\alpha$}
when in addition 
\begin{equation} \label{6.20}
\sum_{Q\subseteq R\subseteq Q(S)}\alpha(R)<\alpha
\ \text{ for } Q \in S.
\end{equation}
\end{definition}

We define the set of minimal cubes of the stopping time region $S$ by
\begin{equation} \label{6.21}
M(S)=\{Q\in S: Q\mbox{ has no children in }S\}.
\end{equation}

\ms
Next we fix a stopping-time region $S$ with constant $\alpha$, where
$\alpha > 0$ is a small constant that will be chosen later, and use the $W(Q)$,
$Q \in S$ to build a Lipschitz graph over the $d$-plane
\begin{equation} \label{6.22}
W_S = W(Q(S)).
\end{equation}

For $Q \in S$, we denote by $\pi_Q$ the orthogonal projection onto $W(Q)$
and by $\pi_Q^\perp$ the orthogonal projection onto the orthogonal complement of 
$W(Q)^\ast$ (the vector space parallel to $W(Q)$).
In the special case of $Q = Q(S)$, simply write $\pi$ (instead of
$\pi_{Q(S)}$) and $\pi^\perp$. Set
\begin{equation} \label{6.23}
d_{S}(x)=\inf_{Q\in S}(\dist(x,Q)+d(Q)),
\end{equation}
for $x\in \bR^{n}$,
\begin{equation} \label{6.24}
D_{S}(p)=\inf\big\{d_S(x) \, ; \, x\in \pi^{-1}(p)\big\}
\end{equation}
for $p\in W_S$, and
\begin{equation} \label{6.25}
Z(S)=\{x\in\Sigma: d_{S}(x)=0\}.
\end{equation}

For the next lemma, and the ensuing construction, we assume that
$\lambda$ and $\lambda^\ast$ are large enough, and $C$ denotes a constant
that may depend on $d$, $\CD$, $\lambda$, and $\lambda^\ast$.

\begin{lemma} \label{t6.4}
We have that
\begin{equation} \label{6.26}
|\pi^\perp(x)-\pi^\perp(y)| \leq C \alpha^{\eta} |x-y|
\end{equation}
for $x,y \in 100Q(S)$ such that 
\begin{equation} \label{6.27}
|x-y| \geq  \frac{1}{10}\min\{d_{S}(x),d_{S}(y)\}.
\end{equation}
\end{lemma}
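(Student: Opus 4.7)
The plan is to carry out a standard chaining argument across the stopping-time region $S$, exploiting the interplay between the pointwise flatness estimate (6.13)---which controls distances to $W(Q)$ at the scale $\alpha(Q)^\eta\,d(Q)$---and the tilt estimates (6.14)--(6.15), which accumulate additively along chains of semi-adjacent cubes and are budgeted by the small stopping-time constant $\alpha$ via (6.20).

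First, I would assign to each point $z\in 100Q(S)$ a cube $Q_z\in S$ with $z\in\lambda_0 Q_z$ for some fixed $\lambda_0$ (so that (6.13) applies) and $d(Q_z)\leq C\,d_S(z)$. I pick $Q^\ast\in S$ nearly attaining the infimum defining $d_S(z)$; if $\dist(z,Q^\ast)\leq d(Q^\ast)$ I keep it, and otherwise I pass to its smallest ancestor that contains $z$ in a bounded dilate. By coherence of $S$, these ancestors remain in $S$ until they reach $Q(S)$, and the worst case $Q_z=Q(S)$ is permitted because $z\in 100Q(S)$. Since $d_S$ is $1$-Lipschitz, the assumption (6.27) gives $d_S(x),d_S(y)\leq C|x-y|$, hence $d(Q_x),d(Q_y)\leq C|x-y|$. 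Then (6.13) yields
\[
\dist(x,W(Q_x))+\dist(y,W(Q_y))\leq C\alpha^\eta\bigl(d(Q_x)+d(Q_y)\bigr)\leq C\alpha^\eta|x-y|.
\]

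Next, I would chain the planes up to $W_S$. By coherence, every ancestor of $Q_x$ (resp. $Q_y$) up to $Q(S)$ lies in $S$, and $\lambda$ was chosen so that parent--child pairs are semi-adjacent. Applying (6.14)--(6.15) at each link and invoking Lemma 2.9 exactly as in the proof of Lemma 2.8 converts the integrated distance bound into an angular bound $\delta(W^\ast(R),W^\ast(R'))\leq C(\alpha(R)+\alpha(R'))$ for consecutive cubes $R,R'$ in the chain. Summing through the triangle inequality for $\delta$ and using (6.20) then gives
\[
\delta(W^\ast(Q_x),W_S^\ast)+\delta(W^\ast(Q_y),W_S^\ast)\;\leq\;C\!\!\sum_{Q_x\subseteq R\subseteq Q(S)}\!\!\alpha(R)+C\!\!\sum_{Q_y\subseteq R\subseteq Q(S)}\!\!\alpha(R)\;\leq\;C\alpha,
\]
so in particular $\|\pi^\perp-\pi^\perp_{Q_x}\|\leq C\alpha$.

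To assemble the conclusion I would write
\[
|\pi^\perp(x)-\pi^\perp(y)|\;\leq\;|\pi_{Q_x}^\perp(x)-\pi_{Q_x}^\perp(y)|+\|\pi^\perp-\pi_{Q_x}^\perp\|\,|x-y|,
\]
where the second term is $\leq C\alpha|x-y|$ by the previous step, and the first term is bounded by $\dist(x,W(Q_x))+\dist(y,W(Q_x))$. The bound on $\dist(x,W(Q_x))$ was already obtained; for $\dist(y,W(Q_x))$ I combine $\dist(y,W(Q_y))\leq C\alpha^\eta|x-y|$ with the fact that $y$ and $W(Q_y)$ both lie within $C|x-y|$ of $x_{Q_x}$, together with the angular deviation $\delta(W^\ast(Q_y),W^\ast(Q_x))\leq C\alpha$ (applied across the ball of radius $C|x-y|$ around $x_{Q_x}$ via the same chain as in the previous step). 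The total is $C\alpha^\eta|x-y|$, proving (6.26). The main obstacle I expect is the bookkeeping in the chaining step: one must verify that the scale condition of Lemma 2.8 (roughly $|x_R-x_{R'}|+r_{R'}/2<r_R$) is satisfied at every link so that (6.14)--(6.15) give clean angle estimates, and that the accumulated tilt is bounded by $\sum_R\alpha(R)\leq\alpha$ rather than a crude pointwise bound---this is where the stopping-time hypothesis (6.20) enters in an essential way rather than being replaceable by a uniform bound on $\alpha(R)$.
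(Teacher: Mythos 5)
Your overall architecture is close to the paper's: both proofs end with the decomposition
\[
|\pi^\perp(x)-\pi^\perp(y)|\le |\pi^\perp_{R}(x)-\pi^\perp_{R}(y)|+\|\pi^\perp-\pi^\perp_{R}\|\,|x-y|,
\]
control the second term by chaining the angle bounds \eqref{6.31} up to $Q(S)$ and summing with \eqref{6.20} (this is exactly \eqref{6.30}), and control the first term by $\dist(x,W(R))+\dist(y,W(R))$ via \eqref{6.13}. The difference is the choice of the reference cube $R$, and that is where your argument has a genuine gap. The paper takes a \emph{single} cube $R$ at scale $d(R)\sim|x-y|$ (the largest ancestor in $S$ of a near-optimal cube for $d_S(x)$ with $d(R)\le 11|x-y|$, or $Q(S)$ itself), so that \emph{both} $x$ and $y$ lie in $\lambda R$ and \eqref{6.13} applied once to $R$ bounds $\dist(x,W(R))$ and $\dist(y,W(R))$ simultaneously. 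You instead take per-point cubes $Q_x,Q_y$ at the scales $d_S(x),d_S(y)$, which may be much smaller than $|x-y|$, and you then need $\dist(y,W(Q_x))\le C\alpha^\eta|x-y|$, i.e.\ you must transport the flatness of $y$ relative to $W(Q_y)$ across a gap of size $|x-y|\gg d(Q_x)$ onto the plane $W(Q_x)$.

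Your justification of that transport does not work as written. The angular bound $\delta(W^\ast(Q_y),W^\ast(Q_x))\le C\alpha$ carries no positional information: two affine planes with the same direction can be arbitrarily far apart, so "applying the angular deviation across the ball of radius $C|x-y|$" requires a base point at which $W(Q_x)$ and $W(Q_y)$ are already known to be close, and you never produce one. Moreover, if you try to get positional closeness by chaining \eqref{6.14}--\eqref{6.15} along "the same chain as in the previous step," i.e.\ all the way up to $Q(S)$ and back down, the drift at a link between cubes $R,R'$ is of order $(\alpha(R)+\alpha(R'))(d(R)+d(R'))$, so the accumulated positional error is $\sum_R\alpha(R)\,d(R)$, which is dominated by the top scales and can be as large as $\alpha\, d(Q(S))\gg\alpha|x-y|$. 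The correct repair is to stop the positional chain at ancestors $R_x\supseteq Q_x$, $R_y\supseteq Q_y$ of scale $\sim|x-y|$ (extrapolating each link to radius $|x-y|$ via Lemma \ref{t2.9}, so the total drift is $\sum\alpha(R)\cdot|x-y|\le C\alpha|x-y|$) and compare $W(R_x)$ with $W(R_y)$ by semi-adjacency --- but at that point you have essentially rediscovered the paper's single cube at scale $|x-y|$, which makes the whole transport step unnecessary. You correctly flagged the chaining bookkeeping as the main risk; the missing piece is precisely that only the angle estimate, not the positional one, survives chaining to the top of $S$.
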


\begin{proof}  
Here $\eta$ is still as in Lemma \ref{t2.6}.
Let $x, y \in 100Q(S)$ satisfy \eqref{6.27}, and assume without loss of generality 
that $d_S(x) \leq d_S(y)$.
Thus $d_S(x) \leq 10 |x-y|$ by \eqref{6.27}, and by \eqref{6.23}
we can find $Q\in S$ such that $\dist(x,Q)+d(Q) \le 11|x-y|$. 
Let $R$ be the largest cube of $S$ that contains $Q$ and such that 
$d(R) \leq 11 |x-y|$.
First assume that $R = Q(S)$. By \eqref{6.13},the assumption that
$x,y \in 100Q(S)$, recalling that $\pi$ is the projection on $W_S = W(Q(S))$ and using 
\eqref{6.20} and the fact that in this case $R = Q(S)$ we have
\begin{eqnarray}  \label{6.28}
|\pi^\perp(x)-\pi^\perp(y)| &\leq& \dist(x,W_S) + \dist(y,W_S)
\leq C \lambda^\ast \alpha(Q(S))^\eta d(Q(S))
\nonumber \\
&\leq& C \lambda^\ast \alpha^\eta d(Q(S))
\leq 11 C \lambda^\ast \alpha^\eta |x-y|.
\end{eqnarray}
So \eqref{6.26} holds in this first case.

If $R \neq Q(S)$, the parent $R^\ast$ of $R$ does not satisfy
the size constraint (because the coherence condition says that $R^\ast \in S$).
That is, $11 |x-y|  \leq d(R^\ast) = 2d(R)$. 
Since $\dist(x,R) \leq \dist(x,Q) \leq 11|x-y|$ by definition of $Q$,
we see that $x$ and $y$ lie in $\lambda R$, and we can apply \eqref{6.13}
to the cube $R$ and the points $x$ and $y$. This yields
\begin{eqnarray}  \label{6.29}
|\pi^\perp_R(x)-\pi^\perp_R(y)| &\leq& \dist(x,W(R)) + \dist(y,W(R))
\leq C \lambda^\ast \alpha(R)^\eta d(R)
\nonumber \\
&\leq& C \lambda^\ast \alpha^\eta d(R)
\leq C \lambda^\ast \alpha^\eta |x-y|.
\end{eqnarray}
It will be good to know that
\begin{equation} \label{6.30}
\delta(W(R)^\ast,W(Q(S))^\ast )\leq C \alpha
\ \text{ for every cube } R \in S 
\end{equation}
(and where $C$ may depend on $\lambda^\ast$ too).
Observe that \eqref{6.17} and \eqref{6.18} imply that 
\begin{equation} \label{6.31}
\delta(W(Q_1)^\ast,W(Q_2)^\ast) 
= \|\pi^\perp_{Q_1}-\pi^\perp_{Q_2}\|
\leq C \lambda^\ast \alpha(Q_1) + \alpha(Q_2)
\end{equation}
whenever $Q_1$ and $Q_2$ are semiadjacent cubes 
(also see the definition \eqref{3.20} of $\delta$). 
In particular, this holds when $Q_2$ is the parent of $Q_1$. 
We apply this to $R$ and all its ancestors up to $Q(S)$
(excluded), use the triangle inequality to sum the estimates, and get using  \eqref{6.20} that
\[
\delta(W(R)^\ast,W(Q(S))^\ast) 
\leq C \lambda^\ast \sum_{R \subset T \subset Q(S)} \alpha(T)
\leq C \alpha,
\]
and so \eqref{6.30} holds.
With our specific choice $R$, $x$ and $y$ as above,
\[
|\pi^\perp_R(x-y)-\pi^\perp(x-y)| \leq \|\pi^\perp_R-\pi^\perp\| \, |x-y|
\leq C \alpha |x-y|,
\]
and \eqref{6.26} follows from \eqref{6.29}. This proves Lemma \ref{t6.4}.
\end{proof}

The next step is to use partitions of unity to construct a Lipschitz
mapping $A : W_S \to W_S^\perp$. To simplify the notation
let $W = W_S$, which we identify with $\R^d$, denote by $W^\perp$
the orthogonal vector space, and set 
\begin{equation} \label{6.32} 
U = W \cap B(\pi(x_{Q(S)}), 10d(Q(S))).
\end{equation}
 Also set $Z = Z(S)$
and $D = D_S$. We keep $d_S$ as it is, to avoid confusion with
other functions $d$. 

We may already define $A$ on $\pi(Z)$. Indeed, notice that by \eqref{6.26} and because $d_S(x)=d_S(y)=0$ for $x,\, y\in Z$ then 
\begin{equation} \label{6.33}
|\pi^\perp(x)-\pi^\perp(y)| \leq C \alpha^{\eta} |x-y|
\ \text{ for } x, y \in Z.
\end{equation}
Thus we
define $A(p)$, for $p\in \pi(Z)$, by the fact that
\begin{equation} \label{6.34}
A(p) \in W^\perp \ \text{ and } p + A(p) \in Z.
\end{equation}

To define $A$ on $U \sm \pi(Z)$, we use partitions of unity
and a (standard!) dyadic grid on $W$. We typically call $R$ a dyadic cube on $W$, 
and $Q$ one of our pseudo-cubes in $\Sigma$. 

Notice that $d_S$ is $1$-Lipschitz, and $D$ is $1$-Lipschitz too, 
by \eqref{6.23}. Thus $Z$ is compact, $\pi(Z)$ is closed, 
$U \sm \pi(Z)$ is open, and $D(p) > 0$ on that set.

Denote by $\cD$ the set of dyadic cubes $R$ of $W$ that meet $U \sm \pi(Z)$, 
such that 
\begin{equation} \label{6.35}
\diam(R) \leq {1 \over 20} D(p) \ \text{ for } p \in R, 
\end{equation}
which are maximal, in the sense that their parent does not satisfy \eqref{6.35}.
Notice that the small dyadic cubes that contain a given point 
$p\in U \sm \pi(Z)$ satisfy \eqref{6.35}, so the cubes 
of $\cD$ cover $U \sm \pi(Z)$. By maximality they are also almost disjoint.
The following properties are easy consequences of the definitions, and the fact
that $d_S$ and $D$ are Lipschitz, and their proof can be found in 
\cite[Section 8]{DS}:
\begin{equation} \label{6.36}
\diam(R) \leq D(p) \leq 60 \diam(R) \ \text{ when $p \in 10R$ and } R \in \cD,
\end{equation}
\begin{eqnarray} \label{6.37}
&36^{-1} \diam(R') \leq \diam(R) \leq 36 \diam(R') \nonumber\\
&\hbox{ for } R, R' \in \cD \hbox{ such that }
10R \,\cap \, 10R' \neq \emptyset,
\end{eqnarray}
and
\begin{equation} \label{6.38}
\sum_{R \in \cD} \1_{10R} \leq C 
\end{equation}
where $C$ depends on $d$. This is obtained by counting how many dyadic cubes of roughly the same size can meet
$10 Q$.

To each $R \in \cD$, we associate a pseudo-cube $Q_R \in S$, as follow. 
Pick $p\in R \cap U$, then use \eqref{6.24} to pick $x\in \R^n$ such that
$d_S(x) \leq 2 D(p)$, then find $Q' \in S$ such that
$\dist(x,Q') + d(Q') \leq 3 D(p)$, and replace $Q'$ with the largest
ancestor $Q_R$ of $Q'$ that lies in $S$ and for which $d(Q_R) \leq 3 D(p)$.
This way we get that
\begin{equation} \label{6.39}
d(Q_R) + \dist(Q_R, \pi^{-1}(R)) \leq d(Q_R) + \dist(x, Q_R) \leq 6 D(p)
\leq 360 \diam(R).
\end{equation}
Let us check that in addition
\begin{equation} \label{6.40}
d(Q_R) \geq {20 \over 11}\diam(R).
\end{equation}
If $Q_R \neq Q(S)$, then the parent $Q'_R$ of $Q_R$ is too large, 
and we get that by \eqref{6.35} that
\[
d(Q_R) = {1\over 2}d(Q'_R) \geq {3 \over 2} D(p) \geq 30 \diam(R).
\]
If $Q_R = Q(S)$, set $p_0 = \pi(x_{Q(S)})$ and notice that by \eqref{6.35}, \eqref{6.25} and \eqref{6.24}, since
 $D$ is Lipschitz, $x_{Q(S)} \in Q(S)$, and because $p \in U$ then we have
\[
\begin{aligned}
20 \diam(R) &\leq D(p) \leq D(p_0)+ |p-p_0| \leq d_S(x_{Q(S)}) + |p-p_0|
\\
&\leq d(Q(S)) + |p-p_0| \leq 11 d(Q(S)) = 11d(Q_R).
\end{aligned}
\]
So \eqref{6.40} holds in this second case too.

Now we define a partition of unity on $U \sm \pi(Z)$. We start with smooth functions
$\wt \varphi_R$, $R \in \cD$, such that 
$\1_{2R} \leq \wt \varphi_R \leq \1_{3R}$ and
$|\nabla\wt \varphi_R| \leq C \diam(R)^{-1}$. Then we set
\[
\varphi_R = {\wt \varphi_R \over \sum_{R' \in \cD} \wt \varphi_{R'}}
\]
on $U$, observe that for $p\in U$,
\begin{equation} \label{6.41}
|\nabla \varphi_R(p)| \leq C \diam(R)^{-1} 
\end{equation}
and
\begin{equation} \label{6.42}
\sum_{R \in \cD} \varphi_R(p) = \1_{U \sm \pi(Z)}(p).
\end{equation}
Denote by $A_R : W \to W^\perp$ the affine map whose graph is $W(Q_R)$, and set
\begin{equation} \label{6.43}
A(p) = \sum_{R \in \cD} \varphi_R(p) A_R(p) 
\ \text{ for } p\in U \sm \pi(Z)
\end{equation}
(we already defined $A$ on $\pi(Z)$). This is the function whose graph
will approximate $\Sigma$ at the scale of the smallest cubes of $S$.

\begin{lemma} \label{t6.5}
The function $A$ is $C\alpha^\eta$-Lipschitz on $U$, and, if 
$$
\Gamma_S = \big\{ p+A(p) \, ; \, p\in U \big\}
$$ 
denotes the graph of $A$ over $U$, 
\begin{equation} \label{6.44}
\dist(x, \Gamma_S) \leq  
|\pi^\perp(x) - A(\pi(x))| \leq C \alpha^\eta d_S(x)
\ \text{ for } x\in 8Q(S).
\end{equation}
As usual, $C$ and $\eta$ depend only on $d$, $\CD$, $\lambda$, 
and $\lambda^\ast$.
\end{lemma}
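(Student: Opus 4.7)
My plan combines four ingredients: a uniform bound on the angles of the planes $W(Q_R)$, a differentiation of \eqref{6.43} on $U\setminus\pi(Z)$, a continuity argument at points of $\pi(Z)$, and, for the graph estimate, a comparison between $W(Q_R)$ and the plane $W(Q^*)$ of a suitable ancestor of a pseudo-cube close to $x$. First, I would show that whenever $R_1, R_2 \in \cD$ satisfy $10R_1\cap 10R_2\neq\emptyset$, the associated pseudo-cubes $Q_{R_1}, Q_{R_2}\in S$ are semi-adjacent: this follows from \eqref{6.37} and \eqref{6.39}--\eqref{6.40}, which show that $d(Q_{R_1}), d(Q_{R_2})$ are both comparable to $\diam(R_1)$ and that $\dist(Q_{R_1}, Q_{R_2}) \leq C\diam(R_1)$, provided $\lambda$ is chosen large. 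Combined with \eqref{6.31}, the coherence of $S$, and \eqref{6.20} (as in the derivation of \eqref{6.30}), this yields $\delta(W(Q_R)^\ast, W(Q(S))^\ast) \leq C\alpha$ for every $R\in\cD$; Lemma \ref{t6.2} then gives $|A_{R_1}(p)-A_{R_2}(p)|\leq C\alpha\diam(R_1)$ on $10R_1\cap 10R_2$, and in particular each $A_R$ is affine with $\|\nabla A_R\| \leq C\alpha$.

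Next, for the Lipschitz bound on $U\setminus\pi(Z)$ I would differentiate \eqref{6.43}:
\[
\nabla A(p) = \sum_R \varphi_R(p)\nabla A_R + \sum_R (\nabla\varphi_R)(p)\,A_R(p).
\]
The first sum has norm $\leq C\alpha$. For the second, using $\sum_R\nabla\varphi_R\equiv 0$ I rewrite it as $\sum_R(\nabla\varphi_R)(p)(A_R(p)-A_{R_0}(p))$ for some fixed $R_0$ with $p\in 3R_0$; by the first step, \eqref{6.38} and \eqref{6.41} this is again $\leq C\alpha$. For $p,q\in\pi(Z)$, I pick $x,y\in Z$ with $\pi(x)=p$, $\pi(y)=q$; then \eqref{6.33} and the identity $|x-y|^2=|p-q|^2+|A(p)-A(q)|^2$ give $|A(p)-A(q)|\leq C\alpha^\eta|p-q|$ for $\alpha$ small. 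The mixed case follows once I show $A$ is continuous at every $p\in\pi(Z)$: as $q\to p$ the cubes $R$ with $\varphi_R(q)>0$ shrink by \eqref{6.36}, forcing $d(Q_R)\to 0$, and one checks that $A_R(q)\to A(p)$, so the estimate extends to $U$ by combining the two pure cases along a segment.

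For the graph estimate, the case $d_S(x)=0$ is immediate from \eqref{6.34}. Otherwise, I pick $Q'\in S$ with $\dist(x,Q')+d(Q')\leq 2d_S(x)$ and an ancestor $Q^*\in S$ of $Q'$ with $d(Q^*)\in [C_0\,d_S(x), 2C_0\,d_S(x)]$ for a large constant $C_0$ (or $Q^*=Q(S)$ if $d_S(x)$ is comparable to $d(Q(S))$). Then $x\in\lambda Q^*$, so \eqref{6.13} gives $\dist(x,W(Q^*))\leq C\alpha^\eta d_S(x)$. For each $R\in\cD$ with $\pi(x)\in 3R$, \eqref{6.36} shows $\diam(R)\approx d_S(x)$, so both $Q_R$ and $Q^*$ have size comparable to $d_S(x)$ and lie within $C\,d_S(x)$ of $x$; they are therefore semi-adjacent. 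Lemma \ref{t6.2} and the first step then yield $\dist(x, W(Q_R))\leq C\alpha^\eta d_S(x)$. Since $\|\nabla A_R\|\leq C\alpha$, an elementary geometric comparison at the nearest point of $W(Q_R)$ to $x$ converts this to $|\pi^\perp(x)-A_R(\pi(x))|\leq C\alpha^\eta d_S(x)$. Averaging with weights $\varphi_R(\pi(x))$ completes the estimate.

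The main obstacle is the graph estimate: one must verify that the chosen $Q^*$ is semi-adjacent to every $Q_R$ appearing in the partition of unity at $\pi(x)$, with constants depending only on $\CD$, and carefully track how the distance from $x$ to $W(Q_R)$ transfers to the \emph{vertical} distance $|\pi^\perp(x)-A_R(\pi(x))|$. Both steps are manageable once $\lambda^\ast\gg\lambda$ and the constants from \eqref{6.35}--\eqref{6.40} are kept under control, but they require careful bookkeeping.
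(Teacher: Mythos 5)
Your architecture is essentially the paper's: semi-adjacency of the pseudo-cubes $Q_R$ attached to overlapping cubes of $\cD$, the cancellation $\sum_R\nabla\varphi_R=0$ to bound $DA$ off $\pi(Z)$, a separate treatment of $\pi(Z)$, and the graph estimate via $\dist(x,W(Q_R))$. Your two deviations are both workable: the soft continuity argument at $\pi(Z)$ replaces the paper's quantitative mixed bound $|A(p)-A(q)|\leq C\alpha^\eta|p-q|$ for $p\in R$, $q\in\pi(Z)$ (but note that "one checks that $A_R(q)\to A(p)$" hides the same computation: one must locate $Q_R$ near the point $p+A(p)\in Z$, which again uses Lemma \ref{t6.4}); and the detour through an ancestor $Q^\ast$ of size $\approx d_S(x)$ replaces the paper's direct verification that $x\in\lambda Q_R$.

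There is, however, a recurring gap: at two key points you never control distances in the $W_S^\perp$ direction, and this is exactly what Lemma \ref{t6.4} is for. Concretely: (i) the displays \eqref{6.37} and \eqref{6.39}--\eqref{6.40} only tell you that $Q_{R_1}$ and $Q_{R_2}$ have sizes comparable to $\diam(R_1)$ and lie within $C\diam(R_1)$ of the \emph{cylinders} $\pi^{-1}(R_1)$ and $\pi^{-1}(R_2)$; a priori the two pseudo-cubes could sit at very different heights along those cylinders, so $\dist(Q_{R_1},Q_{R_2})\leq C\diam(R_1)$ does not follow from what you cite. One has to pick $x\in Q_{R_1}$, $y\in Q_{R_2}$ and apply \eqref{6.26}--\eqref{6.27} to get $|x-y|\leq 2|\pi(x)-\pi(y)|$ whenever $|x-y|$ is not already $\leq 10\,d_S(x)$; that is what pins down the vertical position and yields semi-adjacency. (ii) In the graph estimate, \eqref{6.36} gives $\diam(R)\approx D(\pi(x))$, not $\diam(R)\approx d_S(x)$; the inequality $D(\pi(x))\geq d_S(x)/2$ is a separate fact (one must show $d_S(y)\geq d_S(x)/2$ for every $y$ in the fiber $\pi^{-1}(\pi(x))$), and it is again proved with Lemma \ref{t6.4}. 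Without it, the cubes $R$ of the partition of unity at $\pi(x)$ could be much smaller than $d_S(x)$, the corresponding $Q_R$ would not be semi-adjacent to your $Q^\ast$, and the transfer from $\dist(x,W(Q^\ast))$ to $\dist(x,W(Q_R))$ would break down. Both gaps are fillable by the same device, but as written the cited equations do not deliver the claims.
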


\begin{proof}  
First we check that for $R, \ H \in \cD$ such that $10R \cap 10H \neq \emptyset$,
\begin{equation} \label{6.45}
\dist(Q_R,Q_H) \leq 10^{5} \diam(R).
\end{equation}
Pick $x\in Q_R$ and $y\in Q_S$. Notice that by \eqref{6.39}
\[
d_S(x) \leq \dist(x,Q_R) + d(Q_R) = d(Q_R) \leq 360\diam(R).
\]
 If $|x-y| \leq 10 d_S(x)$, \eqref{6.45} holds. 
Otherwise, Lemma~\ref{t6.4} says that 
$|\pi^\perp(x)-\pi^\perp(y)| \leq C \alpha^{\eta} |x-y| \leq |x-y|/10$
(if $\alpha$ is small enough), hence $|x-y| \leq 2 |\pi(x)-\pi(y)|$. 
But by \eqref{6.39}
\[
\begin{aligned}
\dist(\pi(x), R) &= \dist(x,\pi^{-1}(R)) 
\leq \diam(Q_R) + \dist(Q_R,\pi^{-1}(R))
\\
&\leq 2d(Q_R) + \dist(Q_R,\pi^{-1}(R)) \leq 720 \diam(R),
\end{aligned}
\]
and similarly $\dist(\pi(y), H) \leq 720 \diam(H)$; thus
\[
\begin{aligned}
|x-y| \leq 2 |\pi(x)-\pi(y)| &\leq 2\dist(\pi(x), R) + 2\dist(\pi(y), H)
+ 2\diam(R \cup H) 
\\ &
\leq 720 \diam(R) + 720 \diam(H) + 20 (\diam(R) +\diam(H))
\\&
\leq 10^5 \diam(R)
\end{aligned}
\]
because $10R \cap 10H \neq \emptyset$ and by \eqref{6.37}. 
This proves \eqref{6.45}.

If $\lambda$ is large enough \eqref{6.45} says that $Q_R$ and
$Q_H$ are semi-adjacent. Thus by
\eqref{6.14} and \eqref{6.15}, $W(Q_R)$ and $W(Q_H)$
are quite close to each other in a large area near $Q_R$ and $Q_H$. In particular, for $p \in 10R$
\begin{equation} \label{6.46}
|A_R(p)-A_H(p)| \leq C (\alpha(Q_R)+\alpha(Q_H))(d(Q_R)+d(Q_H))
\leq C \alpha \diam(R). 
\end{equation}

Next we check that $A$ is Lipschitz. We start on a cube $2R$, $R \in \cD$, 
where we can just differentiate \eqref{6.43}, since $|DA_H| \leq C \alpha$ by \eqref{6.30}, and because we only sum over $H$ such that $3H$ meets $2R$ we have
\begin{equation} \label{6.47}
\begin{aligned}
|DA(p)| &= \big|\sum_{H \in \cD}  \varphi_H(p) DA_H(p) + D\varphi_H(p) A_H(p) \big|
\\ &
\leq C \alpha \sum_{H \in \cD} \varphi_H(p)
+ \big|\sum_{H \in \cD}  D\varphi_H(p) A_H(p) \big|
\\ &
\leq C \alpha + \big|\sum_{H \in \cD}  D\varphi_H(p) [A_H(p)-A_R(p)]\big|
\\ &
\leq C \alpha \diam(R) \sum_{H \in \cD} |D\varphi_H(p)| \leq C \alpha,
\end{aligned}
\end{equation}
where we have also used 
 \eqref{6.45}, \eqref{6.41}, and \eqref{6.37}. So
$A$ is locally $C \alpha$-Lipschitz in $U \sm \pi(Z)$.

Notice that $A$ is $C \alpha^\eta$-Lipschitz in $\pi(Z)$, by
\eqref{6.33} and the definition \eqref{6.34}. The next step is to show
that for $R \in \cD$,
\begin{equation} \label{6.48}
|A(p)-A(q)| \leq C \alpha^\eta |p-q| 
\ \text{ when $p\in R$ and $q\in \pi(Z)$.} 
\end{equation}
Let $y\in Z$ be such that $\pi(y) = q$, and pick any $x\in Q_R$.
Since $d_S(y) = 0$, Lemma~\ref{t6.4} says that 
\begin{equation} \label{6.49}
|\pi^\perp(x)-\pi^\perp(y)| \leq C \alpha^{\eta} |x-y|.
\end{equation}
Also, since $x\in Q_R$; \eqref{6.13} says that
\begin{equation} \label{6.50}
\dist(x,W(Q_R)) \leq C \alpha(Q_R)^\eta d(Q_R) \leq C \alpha^\eta \diam(R)
\end{equation}
because $R \in S$ (a stopping time region with constant $\alpha$), and by \eqref{6.39}.
Since $W(Q_R)$ is the graph of $A_R$ and $A_R$ is $C \alpha$-Lipschitz, this yields
$|\pi^\perp(x) - A_R(\pi(x))| \leq C \alpha^\eta \diam(R)$, hence
\begin{equation} \label{6.51}
|\pi^\perp(y)-A_R(\pi(x))| \leq C \alpha^{\eta} (|x-y|+\diam(R)).
\end{equation}
Now we evaluate horizontal distances. Notice that, just because
$x\in Q_R$ and $p\in R$,
\begin{eqnarray} \label{6.52}
|\pi(x)-p| &\leq& \dist(\pi(x),R) + \diam(R) = \dist(x,\pi^{-1}(R)) + \diam(R)
\nonumber \\
&\leq& \diam(Q_R) + \dist(Q_R,\pi^{-1}(R)) + \diam(R)
\nonumber \\
&\leq& 2d(Q_R) + + \diam(R) \leq 721 \diam(R)
\end{eqnarray}
by \eqref{6.39}. Then if $\alpha$ is small enough, by \eqref{6.49}, \eqref{6.52} and \eqref{6.35}, and using the facts that $\pi(y)=q$,
and that $D$ is $1$-Lipschitz and vanishes at $q$ we have
\begin{eqnarray} \label{6.53}
|x-y| &\leq& 2 |\pi(x)-\pi(y)| \leq |\pi(x)-p| + |p-q|
\\
&\leq& 721 \diam(R) + |p-q| \leq 40 D(p) + |p-q|
\leq 41|p-q|.
\nonumber
\end{eqnarray}
Notice that
$$
|A_R(\pi(x))-A_R(p)| \leq C \alpha |\pi(x)-p| \leq C \alpha \diam(R)
$$
because $A_R$ is $C\alpha$-Lipschitz and by \eqref{6.52}, and that
\begin{equation} \label{6.54}
|A_R(p)-A(p)| = \sum_{H} \varphi_H(p) [A_R(p)-A_H(p)]
\leq C \alpha \diam(R)
\end{equation}
because we only sum over $H$ such that $3H$ contains $p$, and by
\eqref{6.46}. We compare these estimates with \eqref{6.51} and get that
\begin{equation} \label{6.55}
|\pi^\perp(y)-A(p)| \leq C \alpha^{\eta} (|x-y|+\diam(R))
\leq C \alpha^{\eta} |p-q|
\end{equation}
by \eqref{6.53}. This proves \eqref{6.48} because $y = q + A(q)$ by definition
of $A$ on $\pi(Z)$.

We may now prove that $A$ is $C\alpha^\eta$-Lipschitz on $U$.
We just need to check that $|A(p)-A(q)| \leq C\alpha^\eta |p-q|$
for $p,q \in U \sm \pi(Z)$. If the segment $[p,q]$ does not meet
$\pi(Z)$, we just integrate $DA$ on $[p,q]$ and use \eqref{6.47}.
Otherwise, we pass through a point of $[p,q] \cap \pi(Z)$ and
use \eqref{6.48} twice.

To check \eqref{6.44}, and for this it would be useful to know that
\begin{equation} \label{6.56}
{d_S(x) \over 2} \leq D(\pi(x)) \leq d_S(x)
\ \text{ for } x\in 50Q(S).
\end{equation}
The second inequality comes directly from the definition \eqref{6.24}. 
For the first one, we need to check that if $x\in 50Q(S)$, then
\begin{equation} \label{6.57}
d_S(x) \leq 2 d_S(y) \ \text{ for every } y \in \pi^{-1}(x).
\end{equation}
Let such $y$ be given, and assume that $y \neq x$. 
Notice that $d_S(x) \leq \dist(x,Q(S))+ d(Q(S)) \leq 50d(Q(S))$, 
by \eqref{6.23} and because $Q(S) \in S$. If $y \notin 100 Q(S)$,
$d_S(y) \geq \dist(y,Q(S)) > d_S(x)$, and \eqref{6.57} holds.
Otherwise, $y\in 100 Q(S)$, we can apply Lemma \ref{t6.4}, and since 
\eqref{6.26} fails (if $\alpha$ is small enough and because $\pi(x)=\pi(y)$),
we get that $|x-y| \leq {1 \over 10} d_S(y)$. Then
$d_S(x) \leq d_S(y) + |x-y| < 2 d_S(y)$, as needed.
So \eqref{6.56} holds.

Now we check \eqref{6.44}.
Let $x\in 8Q(S)$ be given. If $x\in Z$, then $x = \pi(x) + A(\pi(x))$ by
\eqref{6.34}, and \eqref{6.44} holds. Let us assume that
$d_S(x) > 0$. Set $p = \pi(x)$ and notice that $p \in U$,
by \eqref{6.32} and because $\diam(Q(S)) \leq 2 d(Q(S))$ by \eqref{6.2}.
By \eqref{6.56}, $D(p) \geq d_S(x)/2 > 0$, so $p$ lies in some cube
$R \in \cD$. Let $y$ be any point of $Q_R$. 
If $|x-y| \leq {1 \over 10} d_S(x)$, then 
$$
\dist(x,Q_R) \leq {1 \over 10} d_S(x) \leq {2 \over 10} D(p) \leq 12 \diam(R)
\leq  9 d(Q_R)
$$
by \eqref{6.36} and \eqref{6.40}, so $x\in 10Q_R$. Otherwise,
we can apply Lemma \ref{t6.4} and get that \eqref{6.26} holds, hence
$$
|x-y| \leq 2 |\pi(x)-\pi(y)| = 2|p-\pi(y)| \leq 712\diam(R)
$$
by \eqref{6.52}. In this case, 
$$
\dist(x,Q_R) \leq |x-y| \leq 712\diam(R) \leq 500 d(Q_R).
$$

In both cases, $x\in \lambda Q_R$, so \eqref{6.13} says that
\[
\dist(x,W(Q_R)) \leq C \alpha(Q_R)^\eta d(Q_R) \leq C \alpha^\eta \diam(R)
\]
as in \eqref{6.50}, and 
\begin{equation} \label{6.58}
|\pi^\perp(x)-A_R(\pi(x))| \leq C \alpha^{\eta} \diam(R)
\end{equation}
because $W(Q_R)$ is the graph of $A_R$, which is $C\alpha$-Lipschitz
(recall that $\alpha$ is small).
Since $\pi(x) = p \in R$, we get that
\begin{eqnarray} \label{6.59}
|A_R(\pi(x))-A(\pi(x))| &=&
|A_R(p)-A(p)| = \sum_{H} \varphi_H(p) [A_R(p)-A_H(p)] 
\nonumber
\\
&\leq& C \alpha \diam(R) 
\end{eqnarray}
as in \eqref{6.54}. The second part of \eqref{6.44} follows from this and 
\eqref{6.58}, because $20\diam(R) \leq D(p) \leq d_S(x)$ by \eqref{6.35}
and \eqref{6.56}. The first part follows from the second
one because $x= p + \pi^\perp(x)$ and $p+A(p) \in \Gamma_S$, since $p\in U$.
This completes our proof of Lemma~\ref{t6.5}.
\end{proof}

At this time the purely geometric construction is complete, and we may choose $\lambda$ and $\lambda^\ast$, so that they satisfy the constraints
above.

\subsection{A corona decomposition}
\label{corona}

In the previous subsection we proved some estimates relative
to a stopping time region with small constant $\alpha > 0$.
We now need to construct such regions, estimate how many
there are and (in the next subsection) use them to find bi-Lipschitz images in $\Sigma$.
We proceed as in \cite{DS} and \cite{of-and-on}.

Let us first explain how to construct the stopping regions.
Recall that we want to prove Theorem \ref{t1.6},
so we are given a ball $B = B(x,r)$ centered on $\Sigma$.
Without loss of generality, we may assume that $B = B(0,1)$.

We have assumed for the previous sections that $\mu$ has no atoms,
and in fact it is enough to assume that $\mu$ has no atoms in $2B$. 
(If there were atoms the dyadic cubes of Subsection \ref{cubes} would still exist, but we would
have to be more careful in the construction above.) Note that if $\mu$ has an atom at
$x_0$, the integral $\int_0^1 \alpha_d(x_0,r) { dr \over r }$ diverges.
Then the existence of an atom in $2B$ would contradict \eqref{1.19}.

We do not need all the cubes of $\Delta$, and we restrict to
the set
\begin{equation} \label{6.60}
\Delta_B = \big\{ Q \in \Delta \, ; \, 4 \lambda^\ast d(Q) \leq 1 \text{ and } 
Q \cap B \neq \emptyset \big\}.
\end{equation}
We first define a bad set of cubes
\begin{equation} \label{6.61}
\cB = \big\{ Q \in \Delta_B \, ; \, \alpha(Q) \geq \alpha \big\},
\end{equation}
where $\alpha > 0$ is the same very small constant as in the previous section,
and $\alpha(Q)$ is defined by \eqref{6.10}. 

Set $\cG = \Delta_B \sm \cB$. We want to decompose $\cG$ into
stopping time regions with constant $\alpha$. For $Q \in \Delta$, set 
\begin{equation} \label{6.62}
\Delta(Q) = \big\{ R \in \Delta \, ; \,  R \subset Q \text{ and } d(R) \leq d(Q) \big\};
\end{equation}
we added the strange condition $d(R) \leq d(Q)$ because a same set could
correspond to cubes of different generations, and $d(Q)$ determines
the generation of $Q$. If $R \in \Delta(Q)$, we also define the chain of cubes 
between $Q$ and $R$ as
\begin{equation} \label{6.63}
\Delta(Q;R) = \big\{ H\in \Delta \, ; \,  R \subset H \subset Q \text{ and } 
d(R) \leq d(H) \leq d(Q) \big\}.
\end{equation}

\begin{definition} \label{t6.6}
For each $Q_0 \in \cG$, the \it{stopping time region below} $Q_0$,
denoted by $S(Q_0)$, is the set of subcubes $Q \in \Delta(Q_0)$ such that 
\begin{equation} \label{6.64}
\sum_{R \in \Delta(Q_0;Q)} \alpha(R) \leq \alpha,
\end{equation}
and the relation \eqref{6.64} also holds for all the siblings of
$Q$ (i.e., all the children of the parent of $Q$).
\end{definition}

Notice that if $Q \in S(Q_0)$, all the cubes of $\Delta(Q_0;Q)$ lie in 
the good set $\cG$. It could be that $S(Q_0)$ is reduced to the single
cube $Q_0$. Note that $S(Q_0)$ is a stopping time region with constant
$\alpha$, and its top cube is $Q_0$.
Although \eqref{6.20} and \eqref{6.64} are similar, there is a difference in that
we specify a coherence condition in Definition~\ref{t6.3}.
 
We cover $\cG$ inductively. We start with a cube
$Q_1 \in \cG$ of maximal size $d(Q_1)$, construct the region $S(Q_1)$,
remove it from $\cG$, look for a cube $Q_2 \in \cG \sm S(Q_1)$ of maximal size,
remove $S(Q_2)$ from $\cG \sm S(Q_1)$, and so on. This gives a partition
of $\cG$ as
\begin{equation} \label{6.65}
\cG = \bigcup_{j\geq 1} S(Q_j) = \bigcup_{S \in \cF} S,
\end{equation}
where we call $\cF$ the collection of stopping time regions $S(Q_j)$.

Our goal in this subsection is to construct a 
Lipschitz graph $\Gamma_S$ for each $S\in\cF$ and to 
control the size of
$\cB$ and $\cF$.

\begin{proposition} \label{t6.7}
There is a constant $C \geq 0$, that depends on $d$, $\CD$,
$\lambda$, and $\lambda^\ast$, with the following properties. 
\begin{equation} \label{6.66}
\sum_{Q \in \Delta_B} \alpha(Q) \mu(Q) \leq C C_1 \mu(B),
\end{equation}
where $C_1$ is the constant in \eqref{1.19}. Moreover if
\begin{equation} \label{6.67}
\cB' = \big\{ Q \in \Delta_B \, ; \, \alpha(Q) \geq \alpha/2 \big\};
\end{equation}
then
\begin{equation} \label{6.68}
\sum_{Q \in \cB'} \mu(Q) \leq C \alpha^{-1} C_1 \mu(B).
\end{equation}
Finally,
\begin{equation} \label{6.69}
\sum_{S \in \cF} \mu(Q(S)) \leq C (1 + \alpha^{-1} C_1) \mu(B).
\end{equation}
\end{proposition}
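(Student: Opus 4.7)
\textbf{Proof plan for Proposition \ref{t6.7}.}

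For \eqref{6.66} the plan is to exploit that $\alpha(Q)$ is an infimum over a tent of pairs $(y,t)$ with $y\in Q$ and $t \in [\lambda^\ast d(Q), 2\lambda^\ast d(Q)]$. Averaging over this tent against the measure $d\mu(y)\,dt/t$ (which has total mass $\mu(Q)\log 2$) gives
\[
\alpha(Q)\,\mu(Q) \leq \frac{1}{\log 2} \int_{Q}\int_{\lambda^\ast d(Q)}^{2\lambda^\ast d(Q)} \alpha_d(y,t) \, \frac{dt\, d\mu(y)}{t}.
\]
Summed over $Q\in \Delta_B$, a given pair $(y,t)$ lies in at most $O(1)$ of these tents, since for each generation $j$ only one cube in $\Delta_j$ contains $y$, and only boundedly many generations satisfy $2^{-j}\in[t/(2\lambda^\ast), t/\lambda^\ast]$. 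The size constraints built into $\Delta_B$ ensure $t\in(0,2r)$ and $y\in 2B$ (using \eqref{6.2} to bound the diameter of $Q$), so the sum is dominated by the Carleson integral in \eqref{1.19}, giving \eqref{6.66}. The bound \eqref{6.68} then follows immediately by Chebyshev applied to \eqref{6.66} at the threshold $\alpha/2$.

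For \eqref{6.69}, I would classify the tops $T=\{Q(S):S\in\cF\}$ according to the dyadic parent $P$ of $Q(S)$. Three cases arise: (A) $P\notin\Delta_B$, which forces $d(Q(S))\in(1/(8\lambda^\ast),1/(4\lambda^\ast)]$, so these tops all lie in a bounded number of generations and are disjoint subsets of $\Sigma\cap CB$, contributing at most $C\mu(B)$ by doubling. (B) $P\in\Delta_B\cap\cB\subset\cB'$; since the children of a given $P$ partition $P$ up to null sets, $\sum_{\text{Case B}}\mu(Q(S))\leq\sum_{P\in\cB'}\mu(P)\leq C\alpha^{-1}C_1\mu(B)$ by \eqref{6.68}. (C) $P\in\cG$, so $P$ lies in some $S'\in\cF$; since $Q(S)$ is a child of $P$ not in $S'$, coherence of $S'$ forces $P\in M(S')$. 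Thus the Case C contribution is at most
\[
\sum_{S'\in\cF}\sum_{P\in M(S')}\mu(P).
\]

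To bound this double sum, I would use minimality: for each $P\in M(S')$ there is a child $P^{\ast\ast}$ with $\sigma(P^{\ast\ast}):=\sum_{R\in\Delta(Q(S'); P^{\ast\ast})}\alpha(R)>\alpha$. Since $P\in S'$ gives $\sigma(P)\leq\alpha$, one of two alternatives holds: (i) $\alpha(P^{\ast\ast})>\alpha/2$, so $P^{\ast\ast}\in\cB'$; or (ii) $\sigma(P)>\alpha/2$. In case (i), distinct minimal cubes produce distinct witnesses, so $\sum_P\mu(P^{\ast\ast})\leq\sum_{Q\in\cB'}\mu(Q)$, and doubling yields $\sum_P\mu(P)\leq \CD\sum_P\mu(P^{\ast\ast})\leq C\alpha^{-1}C_1\mu(B)$ by \eqref{6.68}. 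In case (ii),
\[
\sum_{P\ \text{in (ii)}}\mu(P)\leq\frac{2}{\alpha}\sum_P\sum_{R\in\Delta(Q(S_P); P)}\alpha(R)\,\mu(P)=\frac{2}{\alpha}\sum_{R\in\cG}\alpha(R)\sum_{\substack{P\in M(S_R)\\ P\subset R}}\mu(P)\leq\frac{2}{\alpha}\sum_{R\in\cG}\alpha(R)\mu(R),
\]
since the minimal cubes of $S_R$ contained in $R$ are pairwise disjoint subsets of $R$; applying \eqref{6.66} to the last expression gives $\leq C\alpha^{-1}C_1\mu(B)$. Combining the contributions from Cases A, B, and C produces \eqref{6.69}. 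The main obstacle is the bookkeeping in Case C and case (ii): one must correctly identify the non-initial tops as children of minimal cubes of other stopping time regions, and then cleanly split the stopping-time witness into a ``one big jump'' in $\alpha$ versus a cumulative Carleson-type failure along the chain from $Q(S')$ to $P$.
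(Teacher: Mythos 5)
Your proofs of \eqref{6.66} and \eqref{6.68} coincide with the paper's: the same averaging of $\alpha(Q)$ over the tent $Q\times[\lambda^\ast d(Q),2\lambda^\ast d(Q)]$, the same bounded-overlap observation (the paper records it as $\sum_{Q\in\Delta_B}\1_{\cR(Q)}\leq 2$), and Chebyshev. For \eqref{6.69}, however, you take a genuinely different route. The paper never classifies the top cubes by their parents; instead it splits $\cF$ into three classes of regions according to whether at least a third of $\mu(Q(S))$ is carried by the minimal cubes in $M_1(S)$ (those whose failing child lies in $\cB'$), by those in $M_2(S)$ (those for which the chain sum up to $Q$ itself already exceeds $\alpha/2$), or by neither --- in which last case the leftover set $Z_0(S)=Q(S)\sm\bigcup_{Q\in M(S)}Q$ has $\mu(Z_0(S))\geq\mu(Q(S))/3$, and the pairwise disjointness of the $Z_0(S)$ gives a packing bound against $\mu(2B)$. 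You instead prove the structural fact that every non-initial top is a child of a cube of $\cB$ or of a minimal cube of an earlier region (which does follow from coherence and the greedy construction, exactly as you argue), reduce \eqref{6.69} to bounding $\sum_{S}\sum_{P\in M(S)}\mu(P)$, and then run the same ``one big jump versus cumulative failure'' dichotomy cube-by-cube; your case (ii) computation is precisely the paper's \eqref{6.77A}. What your approach buys is that you avoid the trichotomy with the $1/3$ thresholds and the $Z_0(S)$ packing argument entirely; what the paper's buys is that it never needs to identify where new tops come from (a fact it only exploits later, when it places $M(S)$ into the exceptional family $T'$). Both arguments share the same minor loose ends (e.g.\ the failing child of a minimal cube need not literally lie in $\Delta_B$ when it misses $B$, and cubes with a single child are counted in two generations), so I do not regard these as gaps specific to your proposal.
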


\ms\begin{proof}  
Let us first observe that by \eqref{6.10} and Chebyshev,
\begin{eqnarray} \label{6.70}
\alpha(Q) &\leq& \fint_{ Q} \fint_{\{\lambda^\ast d(Q)
\leq r \leq 2\lambda^\ast d(Q)\}} \alpha_d(x,r) d\mu(x) dr
\nonumber\\
&\leq& 2\mu(Q)^{-1} \int_{ Q} \int_{\{\lambda^\ast d(Q)\leq r 
\leq 2\lambda^\ast d(Q)\}} \alpha_d(x,r) d\mu(x) {dr \over r}
\end{eqnarray}
Denote by $\cR(Q)$ the region of integration, i.e., set
$\cR(Q) = Q \times [\lambda^\ast d(Q),  2\lambda^\ast d(Q)]$.
Notice that $\cR(Q) \subset 2B \times (0,2)$, in particular because 
$4\lambda^\ast d(Q) \leq 1$ when $Q\in \Delta_B$ (see \eqref{6.60}).
Also notice that for a given $(x,r)$, if $\1_{\cR(Q)}(x,r) =1$,
then $d(Q)$ is a power of $2$
such that $\lambda^\ast d(Q) \leq r \leq 2\lambda^\ast d(Q)$,
so it can only take one or two values, and for each one there is a unique cube
$Q$ that contains $x$. Thus
\begin{equation} \label{6.71}
\sum_{Q \in \Delta_B} \1_{\cR(Q)} \leq 2,
\end{equation}
and now by \eqref{6.70}, \eqref{6.71}, and \eqref{1.19}
\begin{eqnarray} \label{6.72}
\sum_{Q \in \Delta_B} \alpha(Q) \mu(Q) 
&\leq& 2 \sum_{Q \in \Delta_B}
\int_{\cR(Q)} \alpha_d(x,r) d\mu(x) {dr \over r}
\\
&\leq&  2 \int_{\Sigma \cap B(0,2)}\int_0^{2} 
\alpha_d(x,r) \Big(\sum_{Q \in \Delta_B} \1_{\cR(Q)}(x)\Big) d\mu(x) {dr \over r} 
\nonumber\\
&\leq& 4  \int_{\Sigma \cap B(0,2)}\int_0^{2} \alpha_d(x,r) d\mu(x) {dr \over r}
\leq 4 C_1 \mu(B).
\nonumber
\end{eqnarray}
This proves \eqref{6.66}.

Notice that \eqref{6.68} follows from \eqref{6.66} and Chebyshev;
hence we are left with \eqref{6.69} to check.
We distinguish between different types of stopping time regions $S$,
based on the behavior of the set $M(S)$ of minimal cubes of $S$
(see \eqref{6.21}).

First observe that if $S \in \cF$ and $Q \in M(S)$, there is a child $H$ of $Q$
for which the condition \eqref{6.64} fails (because otherwise we would have added
all the children of $Q$ to $S$). Select such a child, and call it $H(Q)$. Then set
\begin{equation} \label{6.73}
M_1(S) = \big\{ Q \in M(S) \, ; \, H(Q) \in \cB' \big\}
\ \text{ and } \ M_2(S) = M(S) \sm M_1(S).
\end{equation}
Note that by \eqref{6.68} the cubes $Q$ such that $Q \in M_1(S)$ are rare.
If $Q \in M_2(S)$, we use the fact that since \eqref{6.64} 
fails for $H(Q)$,
\begin{equation} \label{6.74}
\sum_{R \in \Delta(Q(S);Q)} \alpha(R) 
= \sum_{R \in \Delta(Q(S);H(Q))} \alpha(R) - \alpha(H(Q)) \geq {\alpha \over 2}
\end{equation}
(because $\alpha(H(Q)) \leq \alpha/2$ by definition of $M_2(S)$ and $\cB'$).
To prove \eqref{6.69} we first control the set $\cF_1$ of regions $S\in \cF$ such that
\begin{equation} \label{6.75}
\sum_{Q \in M_1(S)} \mu(Q) \geq {\mu(Q(S)) \over 3}.
\end{equation}
Notice that by \eqref{6.75}, \eqref{6.2} and \eqref{1.1}, and \eqref{6.68}
\begin{eqnarray}  \label{6.76}
\sum_{S \in \cF_{1}} \mu(Q(S)) 
&\leq& 3 \sum_{S \in \cF_{1}} \sum_{Q \in M_1(S)} \mu(Q)
\leq C\sum_{S \in \cF_{1}} \sum_{Q \in M_1(S)} \mu(H(Q))
\nonumber \\
&\leq& C \sum_{H \in \cB'} \mu(H) \leq C \alpha^{-1} C_1 \mu(B).
\end{eqnarray}

Next consider the set $\cF_2$ of regions $S\in \cF$ such that
\begin{equation} \label{6.77}
\sum_{Q \in M_2(S)} \mu(Q) \geq {\mu(Q(S)) \over 3}.
\end{equation}
Let $S \in \cF_{2}$ be given. Observe that by \eqref{6.74} by changing the order of summation we have
\begin{eqnarray}\label{6.77A}
\mu(Q(S)) &\leq& 3 \sum_{Q \in M_2(S)} \mu(Q) 
\leq 6\alpha^{-1} \sum_{Q \in M_2(S)} \sum_{R \in \Delta(Q(S);Q)} \alpha(R) \mu(Q)\nonumber\\
&\leq& 6\alpha^{-1} \sum_{R  \in \Delta(Q(S))}\sum_{Q \in \Delta(R),\, Q\in M_2(S)}  \alpha(R) \mu(Q)\nonumber\\
&\leq& 6\alpha^{-1} \sum_{R  \in \Delta(Q(S))} \alpha(R) \mu(R),
\end{eqnarray}
where we have used the fact that
subcubes $Q \in \Delta(R)$ that lie in $M_2(S)$ are disjoint (by minimality, \eqref{6.1}, and the 
nesting property \eqref{6.3}).
Using \eqref{6.77A} we have
\begin{eqnarray}  \label{6.78}
\sum_{S \in \cF_{2}} \mu(Q(S)) 
&\leq& 6\alpha^{-1} \sum_{S \in \cF_{2}} \sum_{R \in \Delta(Q(S))} \alpha(R) \mu(R)
\nonumber \\
&\leq&  6\alpha^{-1}  \sum_{R \in \Delta_B} \alpha(R) \mu(R)
\leq C \alpha^{-1} C_1 \mu(B)
\end{eqnarray}
because the stopping time regions $S$ are disjoint and contained in
$\Delta_B$, and by \eqref{6.66}. We are thus left with the set $\cF_3$ of regions 
$S\in \cF$ such that
\begin{equation} \label{6.79}
\sum_{Q \in M(S)} \mu(Q) \leq {2\mu(Q(S)) \over 3}
\end{equation}
(the set $\cF_3$ may intersect the previous ones, but this is all right).
Set for $S\in \cF$
\begin{equation} \label{6.80}
Z_0(S) = Q(S) \sm \Big(\cup_{Q \in M(S)} Q \Big).
\end{equation}
We claim that for $x\in Z_0(S)$, and for $k$ large, 
the unique cube $Q \in \Delta_k$ that contains $x$ 
(see \eqref{6.1}) lies in $S$. 
Indeed, let $Q \in \Delta$ contain $x$, and suppose
that $d(Q) \leq d(Q(S))$. Then $Q$ is a subcube of $Q(S)$.
Suppose that $Q \notin S$, and let $Q_1$ be the smallest ancestor
of $Q$ that lies in $S$. Thus the child of $Q_1$ that contains $Q$
does not lie in $S$. By the coherence rule, none of the children of
$Q_1$ lie in $S$. Then $Q_1 \in M(S)$, a contradiction since $x\in Z_0(S)\cap Q_1$.
Notice also that 
\begin{equation} \label{6.81}
Z_0(S) \cap Z_0(S') \ \text{ for } S \neq S' \in \cF,
\end{equation}
simply because $S$ and $S'$ are disjoint. In fact the small cubes
that contain $x\in Z_0(S)$ are contained in $S$ and therefore 
cannot be contained in $S'$.

Since by \eqref{6.79},  $\mu(Z_0(S)) \geq \mu(Q(S))/3$ for $S \in \cF_3$,
we get that
$$
\sum_{S \in \cF_{3}} \mu(Q(S)) 
\leq 3 \sum_{S \in \cF_{3}} \mu(Z_0(S))
\leq  3 \mu(\cup_{S \in \cF} Z_0(S)) \leq 3 \mu(2B) \leq 3 \CD \mu(B)
$$
by \eqref{6.81} and because $Z_0(S) \subset Q(S) \subset 2B$ for $S \in \cF$.
This last estimate completes the proof of \eqref{6.69}. Proposition \ref{t6.7} follows.
\end{proof}

\subsection{Big pieces of bi-Lipschitz images}
\label{s:big-piece}

In this subsection we use the Lipschitz graphs $\Gamma_S$
associated to the stopping time regions of Section \ref{corona} 
to construct a large set $A \subset \Sigma$, and a bi-Lipschitz mapping
$f : A \to f(A)$, with values in $\R^d$ (as in the statement of Theorem \ref{t1.6}).

Our proof follows Section 16 in \cite{DS}, which we just need to modify slightly
because $\mu$ is a doubling measure which is not necessarily Ahlfors regular.
The small boundary condition \eqref{6.7} is useful here, because we need
to separate cubes from each other without removing to much mass.
For this purpose, we use a new constant $\rho \in (0,1)$,
very close to $1$. We replace many cubes $Q$ with
the slightly smaller $\rho Q$ defined in \eqref{6.6}.

Since $B$ is not one of our pseudo-cubes, we add a top layer to our construction,
and set
$$
Q_0 = \bigcup_{Q \in \Delta_B} Q,
$$
which we see as a common ancestor of all cubes. If $\lambda^\ast$
is taken large enough, we can be sure that the cubes of $\Delta_B$
are all strictly contained in $Q_0$ (see \eqref{6.60}). 
Otherwise, we consider $Q_0$ as a cube of the previous
generation even if it coincides with a different cube as a subset of $\Sigma$. 
Let us also set $d(Q_0)=1$.
The set $A$ is obtained by removing from $\Sigma \cap B$
a certain number of small sets. 

Let us define an exceptional set of cubes by
\begin{equation} \label{6.82}
T'= \{ Q_0 \} \cup \cB\cup \{Q(S):S\in \cF\}\cup\{M(S): S\in \cF\}
\end{equation}
and, for $Q \in \Delta_B$, denote by 
$j(Q)$ the number of cubes $R \in T'$ such that
$Q \subset R$ and $d(Q) < d(R)$. Thus $j(Q_0) = 0$,
and $j(Q) \geq 1$ for all the other cubes.
Let $N$ be a large number, to be chosen soon, and set
\begin{equation} \label{6.83}
T=\big\{ Q\in T' \, ; \, j(Q) \leq N \big\}.
\end{equation}
Consider the sets 
\begin{equation} \label{6.84}
F_1 = \bigcup_{Q\in T'\sm T} Q  \ \text{ and } \  
F_2 = \bigcup_{Q\in T'}  (Q\sm \rho Q).
\end{equation}
We show that if $\rho$ and $N$ are chosen correctly,
\begin{equation} \label{6.85}
\mu(F_1) + \mu(F_2) < {\gamma \over 2}\, \mu(B),
\end{equation}
where $\gamma$ is the small constant from Theorem \ref{t1.6}.
First notice that $T'$ is not too large. In fact using Proposition~\ref{t6.7}
as well as the fact that the cubes of $M(S)$ are disjoint and contained in
$Q(S)$ we have
\begin{align*}
\sum_{Q\in T'}\mu(Q)
& \leq \sum_{Q\in \cB}\mu(Q) + \sum_{S\in \cF}\mu(Q(S))
+\sum_{S\in \cF}\sum_{Q\in M(S)}\mu(Q) 
\\
& \leq \sum_{Q\in \cB}\mu(Q)+2\sum_{S\in \cF}\mu(Q(S))
\leq C(1+\alpha^{-1} C_1) \mu(B).
\end{align*}
Next observe that since
each cube of $T'\sm T$ is contained in at least $N$ cubes of $T'$,
$\sum_{Q \in T'} \1_{Q} \geq N$ on $F_1$; then
\begin{align}\label{6.85A}
\mu(F_1) &= \int_{F_1} d\mu \leq N^{-1} \int \sum_{Q \in T'} \1_{Q} d\mu
= N^{-1} \sum_{Q \in T'} \mu(Q) \nonumber
\\
&\leq C N^{-1} (1+\alpha^{-1} C_1) \mu(B)
\leq {\gamma \over 4}\, \mu(B) 
\end{align}
if $N$ is chosen large enough, depending also on $\alpha$, $C_1$, and $\gamma$.
Moreover, by \eqref{6.7}
\begin{align}\label{6.85B}
\mu(F_2) &\leq \sum_{Q\in T'} \mu(Q\sm \rho Q)
\leq C (1-\rho)^\kappa \sum_{Q\in T'} \mu(Q)\nonumber
\\
&\leq C (1-\rho)^\kappa (1+\alpha^{-1} C_1) \mu(B)
\leq {\gamma \over 4}\, \mu(B) 
\end{align}
if $\rho$ is chosen close enough to $1$, depending on $\alpha$, $C_1$, and $\gamma$.
We choose $N$ and $\rho$ so that \eqref{6.85A} and \eqref{6.85B} are satisfied, and get \eqref{6.85}.
We take
\begin{equation} \label{6.86}
A = (\Sigma \cap B) \sm (F_1 \cup F_2),
\end{equation}
and then \eqref{1.20} follows from \eqref{6.85}. We even have room to
remove a tiny piece from $A$ when we deal with density.

\ms
We still need to define a bi-Lipschitz mapping $f$ on $A$.
Let us first check that
\begin{equation} \label{6.87}
A \subset \bigcup_{S\in \cF \atop j(Q(S))\leq N} Z_0(S),
\end{equation}
where $Z_0(S)$ is as in \eqref{6.80}. Let $x\in A$ be given,
and denote by $Q_k(x)$ the unique cube of $\Delta_k$ that contains
$x$ (see \eqref{6.1}). At most $N$ of these cubes $Q_k(x)$ lie in $T'$,
because they are all nested, and if there were more than $N$,
one of them would lie in $T \sm T'$; this is impossible because $x\notin F_1$.
The largest of these $Q_k(x)$ that lies in $\Delta_B$ is a cube of $T'$,
because it is either a bad cube or the top cube of a stopping time region.
Then let $Q$ be the smallest $Q_k(x)$ that lies $T'$. The $Q$ cannot be
a bad cube or a minimal cube of some region, because its child that contains $x$
is either bad or the top cube of some new region, hence lies in $T'$
(a contradiction with the minimality of $Q$). So $Q = Q(S)$ for some
$S\in \cF$. And $j(Q(S))\leq N$ because otherwise $Q \in T\sm T'$
(which is impossible because $x\in A \cap Q$). So \eqref{6.87} holds.

For each $S \in \cF$, Lemma \ref{t6.5} gives a Lipschitz graph $\Gamma_S$.
Let us change notation slightly to avoid confusion. Denote by 
$\pi_S$ (instead of $\pi$) the orthogonal projection on the set
$W_S = W(Q(S))$, and call $A_S$ (instead of $A$) the Lipschitz function 
of Section \ref{corona}. Thus $A_S$ is defined in
$U_S = W_S \cap B(p_S,10d(Q(S)))$, where $p_S = \pi_S(x_{Q(S)})$
is just some point of $\pi_S(Q(S))$ (see \eqref{6.32}), and $\Gamma_S$
is the graph of $A_S$ over $U_S \subset W_S$.

We also have a set $Z(S)$, defined by \eqref{6.25}, which contains
$Z_0(S)$. In fact note that points of $Z_0(S)$ lie in arbitrarily small cubes of $S$,
and therefore by definition \eqref{6.23}) lie in $Z(S)$, which is contained in $\Gamma_S$
(by \eqref{6.44} and because $d_S(x) = 0$ forces $x\in Q(S)$).

\begin{lemma} \label{t6.8}
Using the notation above we have that if $\alpha$ (as defined in \eqref{6.64}) is small enough, then for every $S\in \cF$, the projection 
$\pi_S$ is $2$-bi-Lipschitz on the set 
\begin{equation} \label{6.88}
E(S) = Z(S) \cup \big\{ c_Q \, ; \, Q \in M(S) \big\} \subset Q(S).
\end{equation}
\end{lemma}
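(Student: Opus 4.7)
\textbf{Proof plan for Lemma \ref{t6.8}.}

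The upper bound $|\pi_S(x)-\pi_S(y)|\le |x-y|$ is automatic since $\pi_S$ is an orthogonal projection, so the whole content is the lower bound $|\pi_S(x)-\pi_S(y)|\ge \tfrac12|x-y|$. Because $|\pi_S(x)-\pi_S(y)|^2 + |\pi_S^\perp(x)-\pi_S^\perp(y)|^2 = |x-y|^2$, it suffices to prove the tilt estimate
\begin{equation}\label{eq:tilt}
|\pi_S^\perp(x)-\pi_S^\perp(y)|\le C\alpha^\eta|x-y|\qquad\text{for } x,y\in E(S),
\end{equation}
for some constant $C$ depending on $d,\CD,\lambda,\lambda^\ast$; then choosing $\alpha$ small enough that $C^2\alpha^{2\eta}\le 3/4$ gives $|\pi_S(x)-\pi_S(y)|\ge|x-y|/2$.

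To prove \eqref{eq:tilt}, the plan is to feed the pair $(x,y)$ into Lemma~\ref{t6.4}. Since $E(S)\subset Q(S)\subset 100Q(S)$, the only hypothesis to check is the scale condition $|x-y|\ge \tfrac{1}{10}\min\{d_S(x),d_S(y)\}$. I split into three cases. \emph{First}, if $x,y\in Z(S)$, then $d_S(x)=d_S(y)=0$ by \eqref{6.25} and the condition is trivial. \emph{Second}, if $x\in Z(S)$ and $y=c_R$ for some $R\in M(S)$, then $d_S(x)=0$ and $\min\{d_S(x),d_S(y)\}=0$, so the condition again holds trivially. \emph{Third}, and this is the case that needs real work, suppose $x=c_Q$ and $y=c_R$ with $Q,R\in M(S)$ distinct. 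Here I need to compare $|c_Q-c_R|$ with $\min\{d_S(c_Q),d_S(c_R)\}$.

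The key geometric input for the third case is the disjointness and sandwiching built into the cubes. By \eqref{6.2}, $\Sigma\cap B(c_Q,C^{-1}d(Q))\subset Q$ and similarly for $R$; since the minimal cubes $Q$ and $R$ are disjoint as subsets of $\Sigma$, this forces
\begin{equation}\label{eq:distcenters}
|c_Q-c_R|\ge C^{-1}\max\{d(Q),d(R)\}.
\end{equation}
On the other hand, since $Q\in S$ and $c_Q\in Q$, the definition \eqref{6.23} gives $d_S(c_Q)\le d(Q)$ (take $Q'=Q$ in the infimum), and likewise $d_S(c_R)\le d(R)$. Combining these gives
\[
|c_Q-c_R|\ge C^{-1}\min\{d_S(c_Q),d_S(c_R)\},
\]
so the hypothesis of Lemma~\ref{t6.4} holds with some constant $c_0=C^{-1}$ in place of $1/10$. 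This is where one must note that although Lemma~\ref{t6.4} is stated with the specific fraction $1/10$, an inspection of its proof shows that the same argument produces \eqref{eq:tilt} whenever $|x-y|\ge c_0\min\{d_S(x),d_S(y)\}$ for any fixed $c_0>0$, with the constant $C$ then depending on $c_0$. (Alternatively, one can use the direct estimate \eqref{6.44}: writing
\begin{align*}
|\pi_S^\perp(x)-\pi_S^\perp(y)| &\le |\pi_S^\perp(x)-A_S(\pi_S(x))| + |A_S(\pi_S(x))-A_S(\pi_S(y))|\\
&\quad + |A_S(\pi_S(y))-\pi_S^\perp(y)|\\
&\le C\alpha^\eta\bigl(d_S(x)+d_S(y)+|\pi_S(x)-\pi_S(y)|\bigr)
\end{align*}
by Lemma~\ref{t6.5}, and then bounding $d_S(x)+d_S(y)\le C|x-y|$ in each case by the arguments above.)

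With \eqref{eq:tilt} established for all $x,y\in E(S)$, the lemma follows by choosing $\alpha$ small enough as described. The main obstacle is not computational but structural: one has to confirm that the center $c_Q$ of a minimal cube really is well-separated from every other point of $E(S)$ at the scale $d_S(c_Q)$, so that the tilt estimate of Lemma~\ref{t6.4} can be applied. This hinges on the containment \eqref{6.2} together with the minimality of cubes in $M(S)$, and on the observation that $d_S(c_Q)\sim d(Q)$.
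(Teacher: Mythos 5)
Your proof is correct, and its core --- the case decomposition of pairs in $E(S)$, the separation estimate $|c_Q-c_R|\ge C^{-1}\max\{d(Q),d(R)\}$ coming from \eqref{6.2} together with the disjointness of distinct minimal cubes, and the bound $d_S(c_Q)\le d(Q)$ --- is exactly what the paper uses. The one genuine difference is which earlier lemma carries the tilt estimate. You invoke Lemma \ref{t6.4} as a black box, which forces you to notice that in the center--center case its hypothesis only holds with a constant $c_0=C^{-1}$ (possibly smaller than $1/10$) and to assert that its proof survives this change; that assertion is correct (the chain argument there only needs $\lambda$ large enough depending on $c_0$, and $c_0$ depends only on $\CD$), but it is an extra verification you rightly flag. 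The paper instead argues directly from \eqref{6.44} and the $C\alpha^\eta$-Lipschitz bound on $A_S$ from Lemma \ref{t6.5}, i.e., essentially your parenthetical alternative. Two remarks on that alternative: (i) in the mixed case $x\in Z(S)$, $y=c_R$, the needed bound $d_S(c_R)\le C|x-c_R|$ does \emph{not} follow from ``the arguments above''; it requires the additional observation (the paper's \eqref{6.91}) that $B(c_R,ad(R))\cap Z(S)=\emptyset$, which one proves by noting that any $z\in\Sigma\cap B(c_R,ad(R))$ lies in $R$ and hence has $d_S(z)\ge ad(R)$ by minimality of $R$ in $S$; (ii) your primary route via Lemma \ref{t6.4} sidesteps this entirely, since $\min\{d_S(x),d_S(y)\}=0$ in that case --- a small but real simplification over the paper's treatment. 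Either way the lemma follows.
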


Recall that $c_Q$ is the center of $Q$ provided by \eqref{6.2}.
As usual, small enough depends on 
$\CD$, $\lambda$, and $\lambda^\ast$.

\begin{proof}  
We know that $E(S) \subset Q(S)$ (because $d_S(x) = 0$ forces $x\in Q(S)$), and
that $Z(S) \subset \Gamma_S$,. Now we show that for $Q\in M(S)$, $c_Q$ is not far. from $\Gamma_S$. Set $p_Q = \pi_S(c_Q)$ and
notice that $d_S(c_Q) \leq d(Q)$ by \eqref{6.23}; so \eqref{6.44} says that
\begin{equation} \label{6.89}
|\pi^\perp(c_Q) - A_S(p_Q)| \leq C \alpha^\eta d(Q),
\end{equation}
where we still denote by $\pi^\perp$ the projection on the vector space $W_S^\perp$.

Set $a = (2C)^{-1}$, where $C$ is as in \eqref{6.2}; then \eqref{6.2} says
that 
\begin{equation} \label{6.90}
\Sigma \cap B(c_Q,2ad(Q)) \subset Q
\ \text{ for } Q \in \Delta.
\end{equation}
We claim that
$$
d_S(z) \geq a d(Q) \ \text{ for } z\in \Sigma \cap B(c_Q, ad(Q)).
$$
Indeed, if $R$ is a cube of $S$, either $R \supset Q$ and
then $\dist(z,R) + d(R) \geq d(R) \geq d(Q)$, or else
$R$ does not meet $Q$ and then $\dist(z,R) + d(R) \geq \dist(z,\Sigma \sm Q)
\geq a d(Q)$. This proves the claim (see \eqref{6.23}), which itself implies that
\begin{equation} \label{6.91}
B(c_Q, ad(Q)) \cap Z(S) = \emptyset.
\end{equation}
Next we claim that 
\begin{equation} \label{6.92}
\pi^\perp \text{ is $C \alpha^\eta$-Lipschitz on } E(S), 
\end{equation}
where now $C$ also depends on $A$. Let $x, z \in E(S)$ be given.
If $x = c_Q$ and $y = c_R$ for different cubes $Q, R \in M(S)$, 
\begin{align*}
|\pi^\perp(x) - \pi^\perp(y)|
&= |\pi^\perp(c_Q) - \pi^\perp(c_R)| 
\\& \leq |A_S(p_Q)-A_S(p_R)| + 
|\pi^\perp(c_Q) - A_S(p_Q)| + |\pi^\perp(c_R) - A_S(p_R)|
\\& \leq C \alpha^\eta |p_Q-p_R| + C \alpha^\eta (d(Q)+d(R))
\leq C \alpha^\eta |x-y|
\end{align*}
by \eqref{6.89}, because $|p_Q-p_R| \leq |c_Q-c_R|=|x-y|$, and more importantly
because $|x-y| \geq \dist(x,R) \geq 2a d(Q)$ and similarly
$|x-y| \geq 2a d(R)$. If $x = c_Q$ and $y \in Z(S)$,
\begin{align*}
|\pi^\perp(x) - \pi^\perp(y)|
&= |\pi^\perp(c_Q) - A_S(\pi_S(y))| 
\\& \leq |A_S(p_Q)-A_S(\pi_S(y))| + |\pi^\perp(c_Q) - A_S(p_Q)| 
\\& \leq C \alpha^\eta |p_Q-\pi_S(y)| + C \alpha^\eta d(Q)
\leq C \alpha^\eta |x-y|
\end{align*}
because $y \in \Gamma_S$ and $|x-y| = |c_Q-y| \geq \dist(c_Q,Z(S)) \geq ad(Q)$. 
The two other cases are similar, and \eqref{6.92} follows.
If $\alpha$ is small enough, \eqref{6.92} implies that for $x, y \in E(S)$,
$$
|x-y| \geq |\pi_S(x) - \pi_S(y)| \geq |x-y|- |\pi_S^\perp(x) - \pi_S^\perp(y)|
\geq (1-C \alpha^\eta) |x-y| \geq {1 \over2} |x-y|;
$$
Lemma \ref{t6.8} follows.
\end{proof}

\ms
Next we arrange the various $\pi_S$ to form a single map.
Denote by $T^\ast$ the set of cubes $Q \in T$ such that $Q = Q(S)$ 
for some $S \in \cF$ which is not reduced to $Q$. [When 
$S$ is just composed of $Q$, we find it more convenient
to see $Q$ as a minimal cube.]

When $Q \in T^\ast$, set $E_Q = E(S)$ and define a mapping 
$g_Q : E_Q \to \R^d$ by 
\begin{equation} \label{6.93}
g_Q(x) = \varphi \circ \pi_S(x),
\end{equation}
where $\varphi$ is an isometry from $W_S$ to $\R^d$ such that
$\varphi(\pi_S(c_Q)) = 0$ (to normalize). Notice that
$g_Q$ is still $2$-bi-Lipschitz on $E_Q = E(S)$.

For the other cubes $Q \in T\sm T^\ast$, we also define a set $E_Q$
and a mapping $g_Q : E_Q \to \R^d$. Denote by ${\rm ch}(Q)$
the set of children of $Q$, and set for $Q\in T$ not a top cube
\begin{equation} \label{6.94}
E_Q = \big\{ c_R \, ; \, R \in {\rm ch}(Q) \big\}.
\end{equation}
Notice that 
$E_Q  \subset B(c_Q, d(Q))$, by \eqref{6.2}, and 
\begin{equation} \label{6.95}
|x-y| \geq a d(Q) \ \text{ for }  x \neq y \in E_Q,
\end{equation}
where $a$ is still as in \eqref{6.90}
(notice that $c_R \notin T$ when $R \neq T \in {\rm ch}(Q)$,
and apply \eqref{6.90} to $T$). Also, $E_Q$ has at most $C_1$ elements, 
where $C_1$ depends on $\CD$ (just observe that for $R \in {\rm ch}(Q)$,
$Q \subset B(c_R, 2d(Q)) \subset C B(c_R, 2ad(R))$ and that 
$B(c_R, 2ad(R)) \subset R$). This also works in the special case of $Q = Q_0$,
except that we have to take $C_1$ even larger, to account for the jump of
size between $d(Q_0)$ and the next ones.

Pick a set $X_Q \subset \R^d \cap B(0,d(Q))$ with the same number of elements as 
$E_Q$. We can do this so that 
\begin{equation} \label{6.96}
|u-v| \geq c d(Q) \ \text{ for } u \neq v \in X_Q,
\end{equation}
where $c$ also depends on $\CD$ through $C_1$. Then
let $g_Q = E_Q \to X_Q \subset \R^d$ be any bijection.
Notice that $g_Q$ is bi-Lipschitz, because 
\begin{equation} \label{6.97}
{c \over 2}  |x-y| \leq  cd(Q) \leq |g_Q(x) -g_Q(y)| \leq 2d(Q) \leq {2 \over a} |x-y| 
\end{equation}
for $x \neq y \in E_Q$, by \eqref{6.95} and \eqref{6.96}.
Let us record the fact that for $Q \in T$,
\begin{equation} \label{6.98}
g_Q(E_Q) \subset \R^d \cap B(0,d(Q)).
\end{equation}
When $Q \in T^\ast$, this comes from \eqref{6.88},
\eqref{6.2}, the fact that $g_Q$ is $1$-Lipschitz, and our normalization 
$g_Q(c_Q) = 0$. Otherwise, this is because $X_Q \subset B(0,d(Q))$.

\ms
The functions $g_Q$ for $Q \in T$, are basic building blocks that need
to be glued together. We now focus on the  different levels in our
implicit stopping time construction.
Decompose $T$ into the $N+1$ disjoint families
\begin{equation} \label{6.99}
T_j = \big\{ Q\in T \, ; \, j(Q) = j \big\}, \, 0 \leq j \leq N.
\end{equation}
For $Q \in T_j$ and for each $0 \leq i \leq j$, denote by $Q^i$ the cube of $T_i$
that contains $Q$. These are the predecessors of $Q$ in the iterated stopping time,
with $Q^j = Q$ and $Q^0 = Q_0$. Set $Q^\ast = Q^{j-1}$ when
$j \geq 1$ (the previous stopping cube).

We need transition maps that link $Q$ to the $Q^i$s.
For $Q \in T$ such that $j(Q) \geq 1$, define $h_Q$ by
\begin{equation}\label{6.100}
h_{Q}(x)= 10^{-1}ac x  +g_{Q^\ast}(c_Q)
\end{equation}
(where $a < 1$ is as in \eqref{6.90} and $c > 0$ is as in \eqref{6.96}).
This map is defined everywhere, but we use it when $x \in \R^d$,
and then $h_Q(x) \in \R^d$ because $g_{Q^\ast}(c_Q) \in \R^d$.
Typically, we have points of $\R^d$ coming from $g_Q$ or
previous constructions, and $h_Q$ sends them to their right
place in the next construction, near $g_{Q^\ast}(c_Q)$.
The fact that $h_Q$ is a contraction will help separate the pieces.
Notice that
\begin{equation} \label{6.101}
h_Q(B(0,2d(Q)) \subset B\big(g_{Q^\ast}(c_{Q}), 5^{-1}ac d(Q)\big).
\end{equation}

To construct the bi-Lipschitz function we need we first  compose the $h_Q$s. For $Q \in T_j$, $j \geq 1$, and 
$1 \leq i \leq j$, set
\begin{equation} \label{6.102}
h_Q^i = h_{Q^i} \circ h_{Q^{i+1}} \circ \ldots \circ h_{Q^j}
= h_{Q^i} \circ h_{Q^{i+1}} \circ \ldots \circ h_{Q},
\end{equation}
and then define $f_Q : E_Q \to \R^d$ by
\begin{equation} \label{6.103}
 f_Q = h_Q^1 \circ g_Q.
\end{equation}
 
Recall that for different stopping regions $S$, the sets $Z_0(Q(S))$ are disjoint, by \eqref{6.81} so we may set
\begin{equation} \label{6.104}
f(x) = f_{Q(S)}(x) \ \text{ for } x\in Z_0(S)
\end{equation}
for all $S \in \cF$. In fact, we may restrict our attention
to the regions $S$ such that $Q(S) \in T^\ast$, because 
otherwise $S$ is just composed of its top cube $Q(S)$
(by definition of $T^\ast$, see above \eqref{6.93}), then
$Q(S) \in M(S)$ (see \eqref{6.21}), and $Z_0(S)$ is empty (see \eqref{6.80}).

Recall from \eqref{6.87} and \eqref{6.83} that $A \subset \cup_S Z_0(S)$, 
where the union is over the regions $S$ such that $Q(S) \in T$;
thus \eqref{6.104} gives a definition of $f$ on a set that contains $A$. 
Naturally we want to show that $f$ is bi-lipschitz on $A$.

\begin{lemma} \label{t6.9}
There is a constant $L$ such that the function $f$ constructed in \eqref{6.104} satisfies
\begin{equation} \label{6.105}
L^{-1} |x-y| \leq |f(x)-f(y)| \leq L |x-y| \ \text{ for } x, y \in A.
\end{equation}
\end{lemma}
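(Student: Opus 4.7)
The proof proceeds by case analysis on where $x$ and $y$ sit in the tree of stopping regions. Two ingredients are used throughout: each $g_Q$ is $K_0$-bi-Lipschitz on $E_Q$ (by Lemma~\ref{t6.8} when $Q \in T^\ast$, by \eqref{6.97} otherwise), with $K_0$ depending only on $d, \CD, \lambda, \lambda^\ast$; and each $h_R$ is an affine similarity of $\R^d$ with fixed contraction ratio $\lambda_0 := 10^{-1}ac \in (0,1)$, so $h_Q^1$ is a similarity of $\R^d$ with ratio $\lambda_0^{j(Q)} \in [\lambda_0^N, 1]$. In the first case, when $x, y \in Z_0(S)$ for a common $S \in \cF$, both points lie in $Z(S) \subset E_{Q(S)}$ and
\[
|f(x) - f(y)| = \lambda_0^{j(Q(S))}\,|g_{Q(S)}(x) - g_{Q(S)}(y)|,
\]
so the bi-Lipschitz bound on $g_{Q(S)}$ gives $K_0^{-1}\lambda_0^N |x-y| \le |f(x) - f(y)| \le K_0\,|x-y|$.

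The substantive case is when $x \in Z_0(S_1)$ and $y \in Z_0(S_2)$ with $S_1 \ne S_2$. Let $P$ be the smallest cube of $T$ containing both (which exists because $Q_0$ is a common ancestor), and let $Q', R'$ be the immediate $T$-descendants of $P$ containing $x$ and $y$ respectively; by minimality of $P$ these are distinct, and their centers belong to $E_P$. The strategy is to compare $f(x)$ with $f_P(c_{Q'})$ and $f(y)$ with $f_P(c_{R'})$. Writing $f(x) = h_P^1(\phi_x(x))$ with $\phi_x := h_{Q'} \circ \cdots \circ h_{Q(S_1)} \circ g_{Q(S_1)}$, and iterating \eqref{6.101} while checking via \eqref{6.98} at each step that the argument lies in the appropriate ball $B(0, 2d(\cdot))$, I obtain $\phi_x(x) \in B(g_P(c_{Q'}), 5^{-1}ac\,d(Q'))$ and hence $f(x) \in B(f_P(c_{Q'}), 5^{-1}ac\,\lambda_0^{j(P)}d(Q'))$, and likewise for $y, R'$. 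On the other hand, the bi-Lipschitz property of $g_P$ combined with the pseudo-cube separation \eqref{6.90} (or \eqref{6.95} when $P \notin T^\ast$) gives
\[
|f_P(c_{Q'}) - f_P(c_{R'})| = \lambda_0^{j(P)}|g_P(c_{Q'}) - g_P(c_{R'})| \gtrsim \lambda_0^{j(P)}\min(d(Q'),d(R')),
\]
and the explicit factor $10^{-1}$ in $\lambda_0$ was built in precisely so that the errors above are absorbed into this main term, yielding $|f(x) - f(y)| \asymp \lambda_0^{j(P)}|c_{Q'} - c_{R'}|$. Comparing $|x-y|$ with $|c_{Q'} - c_{R'}|$ via \eqref{6.2} (upper bound $|x-y| \le |c_{Q'} - c_{R'}| + 2d(Q') + 2d(R')$) and via \eqref{6.90} applied to $Q'$ (lower bound $|x-y| \ge 2a\,d(Q')$, since $y \in \Sigma \setminus Q'$) gives $|x-y| \asymp |c_{Q'} - c_{R'}|$, and chaining yields \eqref{6.105}.

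The main obstacle is the error absorption when $d(Q')$ and $d(R')$ differ by several orders of magnitude, which typically happens when $P \in T^\ast$ and one of $Q', R'$ is a deep minimal cube of the stopping region below $P$. In that regime the errors $5^{-1}ac\,\lambda_0^{j(P)}(d(Q') + d(R'))$ must still be a fixed fraction of the separation $\lambda_0^{j(P)}|c_{Q'} - c_{R'}|$ uniformly across scales; this is exactly why the contraction factor $10^{-1}$ was put into $\lambda_0$ and why Lemma~\ref{t6.8} provides a uniform $2$-bi-Lipschitz bound on the whole set $E(S) = Z(S) \cup \{c_Q : Q \in M(S)\}$, including mixed pairs involving both $Z(S)$ and the centers of very small minimal cubes. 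With these uniform estimates in hand, the error absorption reduces to an application of \eqref{6.90}.
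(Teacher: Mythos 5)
Your architecture is the same as the paper's (reduce to the smallest common ancestor $P$ in $T$, compare $f(x)$, $f(y)$ with the images of the centers of the next cubes down, and absorb the errors of size $5^{-1}ac(d(Q')+d(R'))$ times the similarity factor of $h_P^1$ into the separation of those images), but there is a genuine gap at the step where you bound $|x-y|$ from below by $d(Q')+d(R')$. You claim ``$|x-y|\ge 2a\,d(Q')$ via \eqref{6.90}, since $y\in\Sigma\setminus Q'$.'' What \eqref{6.90} actually gives is $|y-c_{Q'}|\ge 2a\,d(Q')$; since $x$ is an arbitrary point of $Q'$ (so only $|x-c_{Q'}|\le d(Q')$) and $a=(2C)^{-1}\le 1/2$, the triangle inequality yields $|x-y|\ge (2a-1)d(Q')\le 0$, which is vacuous. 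Worse, the inequality you want is simply false without further input: $x$ may lie arbitrarily close to $\Sigma\setminus Q'$ with $y$ just outside $Q'$, so that $|x-y|\ll d(Q')+d(R')$ while $|f(x)-f(y)|$ is comparable to the (deliberately enforced) separation of the two branches, which is $\gtrsim d(Q')+d(R')$ times the similarity factor; the \emph{upper} Lipschitz bound then fails. This is exactly why the collar set $F_2=\bigcup_{Q\in T'}(Q\setminus\rho Q)$ was removed from $A$ in \eqref{6.84}--\eqref{6.86}: since $x\notin F_2$ and $Q'\in T'$, one has $x\in\rho Q'$, hence $|x-y|\ge\dist(x,\Sigma\setminus Q')\ge(1-\rho)d(Q')$, and symmetrically for $y$ and $R'$; these are the paper's \eqref{6.116} and \eqref{6.121}. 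Your argument never invokes $F_2$, $\rho$, or the small-boundary property \eqref{6.7}, so it cannot close; the constant $L$ necessarily depends on $\rho$.

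Two smaller points. First, your case decomposition assumes both $x$ and $y$ lie in proper $T$-descendants $Q'$, $R'$ of $P$. If $P=Q(S_2)$, i.e.\ the common ancestor is the top cube of $y$'s own stopping region, then $y\in Z_0(S_2)$ lies in no minimal cube of $S_2$ and hence in no $T$-descendant of $P$, so your $R'$ does not exist. One must then compare $g_P(y)$ directly with $g_P(c_{Q'})$, using that both $y$ and $c_{Q'}$ lie in $E(S_2)$ together with Lemma \ref{t6.8} on such mixed pairs and the bound $|y-c_{Q'}|\ge 2a\,d(Q')$ (legitimate here, since the center is involved); this is the paper's nested case around \eqref{6.111}--\eqref{6.116}. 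You allude to mixed pairs at the end, but your decomposition does not actually reach them. Second, the separation of centers gives $|c_{Q'}-c_{R'}|\ge a(d(Q')+d(R'))$ (as in \eqref{6.119}), not merely $\gtrsim\min(d(Q'),d(R'))$; you need the sum, since with only the minimum the error term $5^{-1}ac(d(Q')+d(R'))$ cannot be absorbed when the two scales are very different.
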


\ms
Here $L$ depends on the various constants of
the construction, including $\alpha$ and the recently chosen $N$ and $\rho$.

\begin{proof} 
We first look at how $f(x)$ behaves when $x\in A$. If $x\in A$ then
 $x\in Z_0(S)$ for some $S \in \cF$ (the only one
for which $Q(S)$ contains $x$). Set $Q = Q(S)$ and $j = j(Q) \in [1,N]$
(we know that $Q \neq Q_0$ because $Q$ is a top cube and $Q_0$
has a special status; see the comments below the definition of $Q_0$, 
above \eqref{6.82}).
We also know that $Q \in T^\ast$ (because otherwise $Z_0(S)$ is empty),
and $f(x) = f_Q(x)$ is given by \eqref{6.103}.

We define a sequence of points $x_i$, $0 \leq i \leq j$, so that  
\begin{equation} \label{6.106}
x_j = g_Q(x) \in B(0,d(Q)),
\end{equation}
where $g_Q$ is given by \eqref{6.93} and the last inclusion 
comes from \eqref{6.98} (recall that $x \in Z_0(S) \subset E(S) = E_Q$;
see near Lemma \ref{t6.8} and \eqref{6.93}). 
Then we apply $h_Q = h_{Q^j}$ to $x_j$ and obtain
$$
x_{j-1} = h_Q(x_j) \in B\big(g_{Q^\ast}(c_{Q}), 5^{-1}acd(Q)\big)
= B\big(g_{Q^{j-1}}(c_{Q}), 5^{-1}acd(Q)\big),
$$
by \eqref{6.101}. If $j= 1$, we stop. Otherwise, we continue, and define
$x_{j-2} = h_{Q^{j-1}}(x_{j-1}) = h_Q^{j-1}(x_j)$. We iterate up until
we define the last point $x_0 = f_Q(x) = f(x)$. We claim that for
$0 \leq i < j$, 
\begin{equation} \label{6.107}
x_i = h_{Q^{i+1}}(x_{i+1}) = h_Q^{i+1}(x_j)
\in B\big(g_{Q^{i}}(c_{Q^{i+1}}), 5^{-1}acd(Q^{i+1})\big)
\subset B\big(0, 2d(Q^{i})\big).
\end{equation}
The first identity is a definition, and the second one comes from \eqref{6.102}.
The inclusion above holds for $i = j-1$. We prove the general statement by 
a "backward" induction argument.

To check that the inclusion in \eqref{6.107} holds for $i=j-2$ it is enough 
to show that $c_{Q^{i+1}} \in E_{Q^i}$, because then \eqref{6.98}
will say that $g_{Q^{i}}(c_{Q^{i+1}}) \in B(0,d(Q^{i}))$.
If $Q^{i} \in T^\ast$ and $S'$ is the stopping time region such that $Q^{i}= Q(S')$,
then $E_{Q^i} = E(S')$ is given by \eqref{6.88}. In addition, $Q^{i+1}$,
which lies in the generation just after $Q^i$, is one of the minimal cubes of
$S'$; then $c_{Q^{i+1}} \in E_{Q^i}$. If $Q^{i} \in T \sm T^\ast$,
$E_{Q^i}$ is given by \eqref{6.94}, and also $Q^{i+1}$ is one of the children
of $Q^i$. Then $c_{Q^{i+1}} \in E_{Q^i}$, and inclusion for $j-2$ follows from the inclusion from $j-1$

Finally, the inclusion for $i$ follows from the one for $i+1$,
because \eqref{6.101} says that $h_{Q^{i+1}}(B(0,2d(Q^{i+1}))) 
\subset B\big(g_{Q^{i}}(c_{Q^{i+1}}), 5^{-1}acd(Q^{i+1})\big)$.
This proves our claim \eqref{6.107}.

Incidentally, all our points $x_i$, $0 \leq i \leq j$, lie in $\R^d$, because 
$x_j = g_Q(x) \in \R^d$, and then all the mappings $h_R$ preserve $\R^d$. 

\ms
To check \eqref{6.105}, let $x, y \in A$.
Let $S$, $Q = Q(S)$, $j$, and the $x_i$ be as before. In particular $x\in Z_0(S)$.
Similarly, let $S'$ be such that $y \in Z_0(S')$, set $R = Q(S')$, 
$j' = j(R)$, and define the $y_i$, $0 \leq i \leq j'$ as we did for the $x_i$.

Consider the ancestors $Q^i$ of $Q$, $0 \leq i \leq j$, and 
the ancestors $R^i$ of $R$, $0 \leq i \leq j'$.
Then let $i$ be the largest index such that $Q^i = R^i$. 
This is the smallest common ancestor of $Q$ and $R$ in the 
stopping time construction; notice that $Q^i$ and $R^i$ are conveniently
indexed by their generation. Of course it could be that $i=0$,
if $Q$ and $R$ do not have a common ancestor in $\Delta_B$.

Since $i \leq \min(j,j')$, we have defined the points $x_i$ and $y_i$.
We claim that to prove Lemma \ref{t6.9} it is enough to show that 
\begin{equation} \label{6.108}
C^{-1}|x-y| \leq |x_i-y_i| \leq C |x-y|.
\end{equation}

In fact if $i=0$ since $x_0=f(x)$ and $y_0=f(y)$, \eqref{6.108} is equivalent to \eqref{6.105}. Otherwise, with the notation above, we have that
\begin{equation} \label{6.109}
f(x) = x_0 = h_{Q^1} \circ \ldots \circ h_{Q^i}(x_i)
\end{equation}
(see \eqref{6.107} and \eqref{6.102}) and similarly
\begin{equation} \label{6.110}
f(y) = y_0 = h_{R^1} \circ \ldots \circ h_{R^i}(y_i).
\end{equation}
Since$Q^i = R^i$ (by definition of $i$), then
$Q^l = R^l$ for $1 \leq l \leq i$. Thus the mappings above are the same.
In this case estimate \eqref{6.105} follows from \eqref{6.108}, 
because all the $h_{Q^l}$ are bi-Lipschitz with uniform bounds, and there are at most
at most $N$ of them. 

To show that \eqref{6.108} holds first assume that $Q = R$. This means that $S = S'$
(recall that $Q = Q(S)$ and $R = Q(S')$), and $x, y$ are both
points of $Z_0(Q)$ (and hence $Q \in T^\ast$). 
In this case $i = j = j'$, $x_i = g_Q(x)$, and 
$y_i = g_Q(y)$ (see \eqref{6.106}), and \eqref{6.108} holds
because $g_Q$ is given by \eqref{6.93}, then Lemma \ref{t6.8} 
says that $g_Q$ is $2$-bi-Lipschitz on $E(S)$, and
$x,y \in Z_0(S) \subset E(S)$ (see the remark above Lemma \ref{t6.8}).

We now assume that $Q \neq R$. If $Q \subset R$ then $R = Q^i$ for some $i \leq j$
(because $R \in T$), $i < j$ because $R \neq Q$, and 
$i$ is as above (i.e., $R$ is the smallest common 
ancestor of $Q$ and $R$).
Then $y_i = g_R(y)$ (by \eqref{6.106}), while $x_i$ is given by
 \eqref{6.107} (because $i<j$).

Set $H = Q^{i+1}$. With this notation, \eqref{6.107} for $x_i$ says that
\begin{equation} \label{6.111}
|x_i - g_R(c_{H})| \leq 5^{-1} acd(H).
\end{equation}
Also recall that $R \in T^\ast$. That is, $R = Q(S')$ and
the stopping time region $S'$ is not reduced to $R$.
A first consequence of this is that $H$, which is of the
next generation in $T$, is one of the minimal cubes of $S'$.
Also $g_R$ is given by a formula analogous to \eqref{6.93}, i.e., $g_R$
is equivalent to $\pi_{S'}$. Notice that $y \in Z_0(S')$ lies in
$E(S')$ (see the remark above Lemma \ref{t6.8}),
and $c_H \in E(S')$ too (directly by \eqref{6.88}, since $H \in M(S')$).
Lemma  \ref{t6.8} and \eqref{6.93} yield that
\begin{equation} \label{6.112}
{1 \over 2}|y - c_{H}| \leq |\pi_{S'}(y) - \pi_{S'}(c_{H})|  = |g_R(y) - g_R(c_{H})| 
\leq |y - c_{H}|.
\end{equation}
Recall that $y_i = g_R(y)$; so \eqref{6.111} and \eqref{6.112} yield
\begin{equation} \label{6.113}
{1 \over 2}|y - c_{H}| - 5^{-1} acd(H)
\leq|y_i -x_i | \leq |y - c_{H}|+ 5^{-1} acd(H).
\end{equation}
Next we estimate $|y - c_{H}|$ and $|x-y|$. First observe that
$y \in Z_0(S') \subset \Sigma \sm H$, by \eqref{6.80} because $H \in M(S')$, 
so \eqref{6.90} says that
\begin{equation} \label{6.114}
|y - c_{H}| \geq 2ad(H).
\end{equation}
Also, $Q \subset H$ (because $H = Q^{i+1}$ and $i < j$),
so $x\in H$ and, by \eqref{6.2},
\begin{equation} \label{6.115}
|x - c_{H}| \leq d(H).
\end{equation}
But $x\in A$, hence $x \notin F_2$ by \eqref{6.86}
and $x\in \rho H$ by \eqref{6.84}. Then
\begin{equation} \label{6.116}
|x-y| \geq \dist(x,\Sigma \sm H) \geq (1-\rho) d(H)
\end{equation}
(see the definition \eqref{6.6}). 
Note that by \eqref{6.113}, \eqref{6.115} and \eqref{6.116} we have
$$
|y_i -x_i |  \leq |y - c_{H}| + 5^{-1} ad(H) \leq |y - x|+2 d(H)
\leq (1+{2 \over 1-\rho})\, |y - x|
$$
By \eqref{6.113} and \eqref{6.114} we have
\begin{align*}
|y_i -x_i |  &\geq {1 \over 2}|y - c_{H}| - 5^{-1} acd(H)
\geq {1 \over 4}|y - c_{H}|+{a \over 4} d(H)
\\
& \geq {a \over 4} (|y - c_{H}|+ d(H)) \geq {a \over 4}|y - x|
\end{align*}

This proves \eqref{6.108} and \eqref{6.105} in the case $ Q\neq R$ and $Q\subset R$. The case 
 when $R \subset Q$ is similar. Thus
we are left with the case where $Q$ and $R$ do not meet.
Then the index $i$ defined above (the generation of the smallest
common ancestor of $Q$ and $R$ in $T$) is smaller than $j$ and $j'$.
This means that both $x_i$ and $y_j$ are as in \eqref{6.107}.

Set $H = Q^{i+1}$ and $H' = R^{i+1}$; these are disjoint sub-cubes
of $Q^i$. By \eqref{6.107} we have that
\begin{equation} \label{6.117}
|x_i - g_{Q^i}(c_{H})| \leq 5^{-1} acd(H)
\ \text{ and } \ 
|y_i - g_{Q^i}(c_{H'})| \leq 5^{-1} acd(H').
\end{equation}
Both points $c_{H}$ and $c_{H'}$ lie in the next generation
of centers (relative to $Q^i$), so they lie in the set $E_{Q^i}$
(either by \eqref{6.94} and because $H$ and $H'$ are children of $Q^i$,
or by \eqref{6.88} and because $H$ and $H'$ are minimal cubes of $S''$, where
$Q^i = Q(S'')$). Since $g_{Q^i}$ is bi-Lipschitz on that set, we get that
\begin{equation} \label{6.118}
{c \over 2} |c_{H'}-c_{H}| \leq |g_{Q^i}(c_{H'}) - g_{Q^i}(c_{H})| 
\leq {2 \over a} |c_{H'}-c_{H}|
\end{equation}
(see \eqref{6.97}). In addition by \eqref{6.90}
\begin{equation} \label{6.119}
 |c_{H'}-c_{H}| 
 \geq {1 \over 2} \big(\dist(c_H,\Sigma \sm H)+ \dist(c_{H'},\Sigma \sm H')\big)
\geq a(d(H)+d(H')).
\end{equation}
Also $x \in Q \subset H$ and $y \in R \subset H'$,
so
\begin{equation} \label{6.120}
|x - c_{H}| + |y - c_{H'}| \leq d(H)+d(H')
\end{equation}
as in \eqref{6.115}, and the fact that $x, y \in A \subset \R^n \sm F_2$
implies that
\begin{equation} \label{6.121}
|x-y| \geq {1 \over 2} \big(\dist(x,\Sigma \sm H)+ \dist(y,\Sigma \sm H')\big)
\geq {1-\rho \over 2}(d(H)+d(H'))
\end{equation}
because $H$ and $H'$ are disjoint, and as in \eqref{6.116}.
Then by \eqref{6.118}, \eqref{6.117}, \eqref{6.119}, \eqref{6.120} and because ${1 \over 2} - {1 \over 10} \geq {1 \over 5}+{1 \over 10}$ we have
\begin{align*}
|y_i -x_i |  
&\geq |g_{Q^i}(c_{H'}) - g_{Q^i}(c_{H})| - |x_i - g_{Q^i}(c_{H})| -|y_i - g_{Q^i}(c_{H'})|
\\
& \geq {c \over 2} |c_{H'}-c_{H}| - 5^{-1} ac(d(H)+d(H'))
\\
& \geq {c \over 10} \big(|c_{H'}-c_{H}| + d(H)+d(H')\big)
\geq {c \over 10} |x-y|.
\end{align*}
Futhermore by \eqref{6.118}, \eqref{6.117}, \eqref{6.120}, and \eqref{6.121}
we have
\begin{align*}
|y_i -x_i |  & \leq |g_{Q^i}(c_{H'}) - g_{Q^i}(c_{H})| 
+ |x_i - g_{Q^i}(c_{H})| + |y_i - g_{Q^i}(c_{H'})|
\\
&\leq {2 \over a} |c_{H'}-c_{H}| + 5^{-1} ac(d(H)+d(H'))
\\
&\leq {2 \over a} \big(|x-y| + 2(d(H)+d(H'))\big)
\leq {2 \over a}\big(1 + {4 \over 1-\rho} \big) \, |x-y|.
\end{align*}
This proves \eqref{6.108} in the only remaining case, \eqref{6.105} follows,
and so does Lemma \ref{t6.9}.
 \end{proof}

\subsection{Density control}
\label{density}

At this point, we have a set $A$ that satisfies 
the geometric conditions in Theorem \ref{t1.6}.
We still need to show that 
$\mu|_{f(A)}$ and $\H^d|_{f(A)}$ are mutually absolutely continuous,
and satisfy \eqref{1.21}.
In order to accomplish this we remove a small piece of $A$. Set
\begin{equation} \label{6.122}
A^\sharp = \Big\{ x\in A \, ; \,  
\int_{0}^{2r} \alpha_{d}(x,t)\frac{dt}{t} 
\leq {2C_1 \over \gamma}
\Big\}.
\end{equation}
It follows at once from \eqref{1.19} and Chebyshev that
$$
\mu(A \sm A^\sharp) \leq {\gamma \mu(B) \over 2}.
$$
Then $A^\sharp$ still satisfies \eqref{1.20}, by \eqref{6.86} and \eqref{6.85}, and it inherits
the geometric properties of $A$. We check to \eqref{1.21}.

Notice that every point $x\in A^\sharp$ satisfies the initial condition $J(x)<\infty$
of Section~\ref{proof2}, and so $x \in \Sigma_0(d)$, the set of Theorem \ref{t1.5}(
see the lines below \eqref{5.5}).
Also, \eqref{5.6} and \eqref{5.7} hold, which imply that the density
$\theta_d(x)$ defined by \eqref{1.15} satisfies
\begin{equation} \label{6.123}
\Big| \log\Big({\theta_d^\ast(x,r_1) \over \theta_d(x)}\Big)\Big|
\leq C \sum_{l \geq 1} \alpha_d(x,r_l),
\end{equation}
where the radii $r_l = r_l(x)$ are defined in \eqref{5.2} and \eqref{5.3}
and 
$$
\theta_d^\ast(x,r_1) = r_1^{-d} c_{d}(x,r_1) \mu(B(x,r_1))
$$
is defined in \eqref{3.14}. Since 
$r_1 \in [1/4,1/2]$, $x\in \Sigma \cap B$, $\mu$ is doubling,
and $1 \leq c_{d}(x,r_1) \leq 2^d$ by \eqref{2.14} or \eqref{1.8}
$$
C^{-1} \mu(B) \leq \theta_d^\ast(x,r_1) \leq C \mu(B).
$$
By \eqref{5.4}, \eqref{5.1}, and the definition of $A^\sharp$ we have
$$
\sum_{l \geq 1} \alpha_d(x,r_l) \leq 2 J(x) = 2\int_{0}^{1} \alpha_{d}(x,t)\frac{dt}{t} 
\leq {4C_1 \over \gamma}.
$$
Thus
$$
C^{-1} \mu(B) \leq \theta_d(x) \leq C \mu(B)
\ \text{ for } x\in A^\sharp,
$$
for some large constant $C$ that depends on $C_1$. This proves that
$A^\sharp$ in contained in a finite union of sets $\Sigma_0(d,k)$
as in \eqref{1.16}, and Theorem \ref{t1.5} says that $\mu$ and 
$\H^d$ are mutually absolutely continuous on $A^\sharp$,
$\mu = \theta_d \H^d$ there. The estimate \eqref{1.21} follows at once
(recall that $r=1$ here). This completes the proof of Theorem \ref{t1.6}.

\section{Proof of the uniform rectifiability result - Theorem \ref{t1.8}}
\label{UR}

In this section we prove Theorem \ref{t1.8}, the uniform version of
Theorem \ref{t1.7}. Let $\mu$ and $\Sigma$ be as in the statement of Theorem \ref{t1.8}, and choose $C_1$ so small
that we can apply Theorem \ref{t1.7}, with a constant $\gamma > 0$ that will
be chosen soon. 

We saw at the beginning of Section \ref{proof2} that 
$\mu$-almost every point $x$ of $\Sigma$ lies in the good set $\Sigma_0(d)$
of Theorem \ref{t1.5}, and hence the density 
$\theta_h(x) = \lim_{r \to 0} r^{-d} \mu(B(x,r))$ exists (and is finite) (
see argument above \eqref{5.6}).

\begin{lemma} \label{t7.1}
There is a constant $C \geq 0$, that depends only $d$ and $\CD$, such that
$\log\, \theta_d \in \BMO(\mu)$, with
\begin{equation} \label{7.1}
|| \log\,\theta_d ||_{BMO} \leq C C_1^\eta.
\end{equation}
\end{lemma}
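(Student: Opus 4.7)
The plan is to imitate, at the scale of each ball $B = B(x_0,r_0)$ centered on $\Sigma$, the chain of density estimates carried out in Section \ref{proof2} (specifically the passage \eqref{5.19}--\eqref{5.22}), but now treating the parameter $\gamma$ there as a free parameter to be balanced against $C_1$. I take as the candidate BMO constant $c_B = \log(2^d r_0^{-d} \mu(B))$. For a threshold $\tau \in (0,1)$ to be optimized, introduce the good set
\begin{equation*}
A_B = \Big\{ x \in B \,;\, J^B(x) \leq \tau \Big\},
\qquad J^B(x) := \int_0^{2r_0} \alpha_d(x,t) \tfrac{dt}{t}.
\end{equation*}
The Carleson hypothesis \eqref{1.19} of Theorem \ref{t1.8} and Chebyshev give $\mu(B \setminus A_B) \leq C C_1 \tau^{-1} \mu(B)$.

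For $x\in A_B$ I plan to repeat verbatim the argument \eqref{5.19}--\eqref{5.22}: iterating Corollary \ref{t2.11} along the dyadic radii $r_k(x)$ of \eqref{5.2}--\eqref{5.3} yields $|\log\theta_d(x) - \log\theta_d^\ast(x,r_5(x))| \leq C J^B(x) \leq C\tau$, and Lemma \ref{t2.12} applied to $B(x_0,r_0)$ with $y=x$ and $a = r_5(x)/r_0$ (using Lemma \ref{t2.6} to verify the distance hypothesis, with $\wt\delta \sim \alpha_d(x_0,r_0)^\eta$) compares $\mu(B(x,r_5(x)))$ with $\mu(B)$. Combining these yields the pointwise bound $|\log\theta_d(x) - c_B| \leq C\tau + C\alpha_d(x_0,r_0)^\eta$ on $A_B$. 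To get rid of the single number $\alpha_d(x_0,r_0)$ I would average: Fubini and \eqref{1.19} show that most $y \in B$ already satisfy $\int_{r_0/4}^{r_0} \alpha_d(y,t)\, dt/t \leq C\sqrt{C_1}$, so choosing such a $y$ as the reference center (instead of $x_0$) upgrades the right-hand side to $C\tau + C C_1^{\eta/2}$.

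The contribution of $A_B$ to $\int_B |\log\theta_d - c_B|\, d\mu$ is then at most $C(\tau + C_1^{\eta/2}) \mu(B)$. For the bad set $B \setminus A_B$ I would run a Vitali / stopping--time decomposition: cover $B \setminus A_B$ by balls $B_i$ centered in $\Sigma \cap (B \setminus A_B)$ on which the same argument applies at a smaller scale, iterate, and bound $|c_{B_i} - c_B|$ by an absolute constant via the doubling property. The measures of successive generations of bad balls decay geometrically at rate $CC_1\tau^{-1}$, while $|c_{B_i} - c_B|$ grows only linearly in the number of generations, so the resulting series converges. Optimizing $\tau = C_1^\beta$ for a suitable $\beta \in (0,1)$ balances the good--set error $\sim\tau$ against the bad--set error $\sim (C_1 \tau^{-1}) \cdot |c_{B_i}-c_B|$, yielding a final BMO bound $\leq C C_1^\eta$ for a positive exponent $\eta$ depending only on $d$ and $\CD$.

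The main obstacle is not any single density estimate, but the iteration bookkeeping: $\log\theta_d$ is a priori unbounded on $B \setminus A_B$, so the bad--set contribution cannot be estimated trivially. What makes the scheme work is the interplay between the geometric decay of the bad--set measures (from the Carleson hypothesis \eqref{1.19}) and the $O(1)$ increments $|c_{B_i}-c_B|$ from changing reference balls under doubling; extracting the correct exponent $\eta$ requires carefully choosing the Vitali covering and the stopping threshold $\tau$ so that the accumulated error remains summable at the desired rate.
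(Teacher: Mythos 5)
Your overall strategy (pointwise control of $\log\theta_d$ on a good set $A_B$, then a John--Nirenberg--type iteration to absorb the bad set) is viable and is in fact the paper's \emph{alternative} proof, which invokes Str\"omberg's variant of the John--Nirenberg theorem. But your execution of the bad-set iteration has a genuine gap: you claim that $|c_{B_i}-c_B|$ is ``bounded by an absolute constant via the doubling property.'' This is false for a doubling measure that is not Ahlfors regular. With $c_B=\log(2^d r_0^{-d}\mu(B))$ and $B_i\subset B$ of radius $r_i\ll r_0$, doubling only gives $|c_{B_i}-c_B|\leq C(1+\log(r_0/r_i))$, which is unbounded over a Vitali covering of $B\setminus A_B$ whose balls can be arbitrarily small relative to $B$ (the whole point of the paper is that $r^{-d}\mu(B(x,r))$ is \emph{not} a priori stable across scales; see Examples \ref{t4.2} and \ref{t4.3}). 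Consequently the ``increments grow linearly in the number of generations'' bookkeeping does not close. The correct way to run this iteration (Str\"omberg's) is to compare the constant of a stopping ball with that of its predecessor not through doubling but through a common point of the two good sets: if the relative measure of the bad set is small enough, the good sets intersect, so each increment is at most twice the oscillation bound $\lambda$, and the accumulated error is $O(\lambda)$.

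The paper's primary proof shows that no stopping time is needed at all. The chain estimates \eqref{5.19}--\eqref{5.21} do not require $J(y)$ to be small, only finite, and they yield the pointwise bound $\big|\log\theta_d(y)-\log(2^d\mu(B))\big|\leq C(\gamma^{-1}C_1)^{\eta}+CJ(y)$ for $\mu$-almost every $y\in B$; the error degrades gracefully as $CJ(y)$ rather than becoming uncontrolled off the good set. Averaging over $B$ and using the Carleson hypothesis \eqref{1.19} to bound $\fint_B J\,d\mu\leq CC_1$ gives \eqref{7.2} directly, with a single reference pair $(x_0,r_0)$ chosen exactly as in Section \ref{proof2} (your averaging refinement of the reference center is unnecessary, since $\gamma$ is a fixed constant there). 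If you want to keep your route, replace the doubling comparison of the $c_{B_i}$ by the good-set intersection argument; note also that the exponent obtained from optimizing $\tau$ need not be the $\eta$ of Lemma \ref{t2.6}, although any positive power of $C_1$ would suffice for the application in Section \ref{UR}.
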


\begin{proof}  
Here $C_1$ still denotes the constant from \eqref{1.19}, 
and $\eta > 0$ is the same constant as in Lemma \ref{t2.6}. Thus we can make $C C_1^\eta$ as small as we want.
We present two slightly different proofs: one exploits and adapts the computations performed in Section \ref{proof2}, the other
uses a variant of John-Nirenberg's arguments.

To prove \eqref{7.1} we need to show that for any $x\in \Sigma$ and any $r > 0$,
\begin{equation} \label{7.2}
\fint_{B(x,r)} \big |\log\,\theta_d(y) - m(x,r) \big| d\mu(y) \leq CC_1^\eta,
\end{equation}
where $m(x,r) = \fint_{B(x,r)}  \log\, \theta_d$.

By rotation and dilation invariance, we may assume that
$B(x,r) = B(0,1/2)$ (as we did in Section \ref{proof2}).
Recall from \eqref{5.19} that for $\mu$-almost every $y\in B(0,1/2)$,
\begin{equation} \label{7.3}
\Big| \log\Big({\theta_d(y) \over \theta_d^\ast(y,r_5(y))}\Big)\Big|
\leq C \sum_{l \geq 5} \alpha_d(y,r_l(y)) \leq C J(y),
\end{equation}
where $J(y) = \int_0^1 \alpha_d(y,t) {dt \over t}$ as in \eqref{5.1} 
and $\theta_d^\ast$ comes from \eqref{3.14}. 
Note that \eqref{5.20} and \eqref{5.21} only use the fact that $J(y) < \infty$.
Therefore combining \eqref{5.20} , \eqref{5.21} and \eqref{7.3} we obtain that 
for $\mu$almost every $y\in B(0,1/2)$
\begin{equation} \label{7.4}
\Big|\log\Big({\theta_d(y) \over 2^d\mu(B)} \Big)\Big| \leq 
C (\gamma^{-1} C_1)^{\eta}+ C J(y).
\end{equation} 
Let 
$a = \log(2^d\mu(B))$. Note that using \eqref{7.4} and hypothesis \eqref{1.19} we obtain
\begin{eqnarray} \label{7.5}
\fint_ {B(0,1/2)} \big |\log\,\theta_d(y) - a \big| d\mu(y) 
&\leq& C   (\gamma^{-1} C_1)^{\eta}+  C\fint_{B(0,1/2)} J(y) dy
\nonumber\\
&&\hskip-2cm
\leq C (\gamma^{-1} C_1)^{\eta} + C C_1 \leq C (\gamma^{-1} C_1)^{\eta}.
\end{eqnarray}
The fact that we can replace $a$ by $m(x,r)$ in \eqref{7.5} to get \eqref{7.2} uses the triangle inequality 
and
is a standard trick she working in $\BMO$.

An alternative to this proof notices that \eqref{5.22} ensures that
the function $f = \log\, \theta_d$ is $C (\gamma^{-1} C_1)^{\eta}$-close to a constant
on the set $A \subset \Sigma \cap B$. Since by \eqref{1.20}
$\mu(B(x,r) \sm A) \leq \gamma \mu(B(x,r)$ for some constant
$\gamma$ which is as small as we want, the lemma follows by a known variant
by J.-O. Str\"omberg of the proof of John and Nirenberg's theorem on $\BMO$. 
The main remark is that the
proof of John - Nirenberg's theorem that can be found in \cite{Journe}, page 32, 
with the standard cubes in $\R^n$ can be carried out in this setting 
with the pseudo-cubes of Subsection \ref{cubes}. It yields John - Nirenberg's theorem
for doubling measures.
We claim that the
proof of John and Nirenberg's theorem that can be found in \cite{Journe}, page 32, 
with the standard cubes in $\R^n$ replaced
with the pseudo-cubes of Subsection \ref{cubes}, works for this and
also gives a proof of John and Nirenberg's result for doubling measures that will be used soon.
To implement Stromberg's argument, we first truncate the function (in case it was not locally integrable),
replace $m_Q f$ and $m_{Q^\ast}f$ (the averages of $f$ in $Q$ and $Q^\ast$) by the constant associated to a cube and its 
parent (the constant $f$ is close to in a large set of $Q$ and $Q^\ast$). Notice that the difference is small because $\gamma$ is small.
This completes the second proof of Lemma \ref{t7.1}.
\end{proof}

\ms
A standard fact about $\BMO$ functions is that if $f \in \BMO$
and $\|f\|_{BMO}$ is small enough, then $e^{\pm f}$ is locally integrable
(John and Nirenberg's). In fact $f$ is an $A_p$-weight
 for any $p > 1$. This  implies that $e^f\in A_\infty$. The fact that this is valid for doubling measures
can be seen by noting that the proofs in either \cite{Journe} or
\cite{GarciaCuerva} can be implemented in this setting using pseudo-cubes
instead of regular cubes. Thus
\begin{equation} \label{7.6}
\theta_d^{-1} \in A_{\infty}(d\mu).
\end{equation}

More explicitly, Theorem \ref{t1.5} gives sets $\Sigma_0(d,k)$
that cover $\mu$-almost all $\Sigma$, and on which $\H^d$ and $\mu$
are mutually absolutely continuous with density bounded above and below.
On $\Sigma_0 = \cup_k \Sigma_0(d,k)$, 
$\H^d$ and $\mu$ are still mutually absolutely continuous, with
\begin{equation} \label{7.7}
\mu|_{\Sigma_0} = \theta_d \H^d|_{\Sigma_0}
\ \text{ and } \ 
\H^d|_{\Sigma_0} = \theta_d^{-1} \mu|_{\Sigma_0},
\end{equation}
where we use the fact that
$\theta_d^{-1} \in L^1_{loc}(\mu)$ (see \eqref{7.6}). The next step is to show that
\begin{equation} \label{7.8}
\H^d(\Sigma \sm \Sigma_0) = 0.
\end{equation}
We do so using a density argument.
 Recall $\alpha_d(x,r)$ is a Borel function of $(x,r) \in \Sigma \times (0,\infty)$ (see Remark \ref{tt-r2}). Therefore since
$\Sigma_0 = \big\{ x\in \Sigma \, ; \,
\int_0^1 \alpha(x,r) {dr \over r}< \infty \big\}$ (see\eqref{3.1}) $\Sigma_0$ is Borel measurable.
Thus applying Part (2) of Theorem 6.2 in \cite{Mattila} to the set $S = \Sigma_0 \cap B(0,R)$
for any $R > 0$; and using \eqref{7.7} and the fact that $\theta_d^{-1} \in L^1_{loc}(\mu)$ we have
$$
\H^d(S) = \int_S \theta_d^{-1} d\mu \leq \int_{B(0,R)} \theta_d^{-1} d\mu <\infty.
$$
Theorem 6.2 in \cite{Mattila}
guarantees that 
\begin{equation} \label{7.10}
\limsup_{r \to 0} r^{-d} \H^d(S \cap B(x,r)) = 0
\end{equation}
for $\H^d$-almost every point of $\R^n \sm S$.
Thus if \eqref{7.8} fails, we can find $R > 0$ and $x\in \Sigma \cap B(0,R)$
such that \eqref{7.10} holds.
But \eqref{7.10} fails for every $x\in \Sigma \cap B(0,R)$, because 
as soon as $B(x,r) \subset B(0,R)$, the proof of Theorem \ref{t1.7} gives a big piece of
Lipschitz graph $A \subset \Sigma_0 \cap B(x,r)$, 
with $\H^d(A) \geq C^{-1} r^d$ (by \eqref{1.20} and \eqref{1.21}). 
This contradiction shows that \eqref{7.8} holds.

Once we have \eqref{7.8}, \eqref{7.7} says that that the two measures 
$\H^d|_{\Sigma}$ and $\mu$ are mutually absolutely continuous with respect
to each other, with $A_{\infty}$ densities, and the rest of Theorem \ref{t1.8} follows easily. 
Let us first show that $\Sigma$ is Ahlfors regular.

Set $f = \log\,\theta_d$ and, for $x\in \Sigma$ and $r > 0$,
set $m(x,r) = \fint_{B(x,r)} f$. Since $f\in \BMO$ with a small
norm, John and Nirenberg's theorem guarantees that
\begin{equation} \label{7.11}
\fint_{B(x,r)} e^{|f-m(x,r)|} d\mu \leq C.
\end{equation}
We evaluate $m(x,r)$, using either \eqref{1.21} or more directly \eqref{5.22} which ensures 
that for $y\in A\subset B(x,r)$
$\big|f(y) - \log\big(r^{-d}\mu(B(x,r))\big)\big| \leq 1$. This proves that
$\big|m(x,r) - \log\big(r^{-d}\mu(B(x,r))\big)\big| \leq 1$, that is
\begin{equation} \label{7.12}
C^{-1} r^{-d}\mu(B(x,r)) \leq e^{m(x,r)} \leq C r^{-d}\mu(B(x,r)).
\end{equation}
By \eqref{7.8}, \eqref{7.7}, \eqref{7.11}, and \eqref{7.12} we have
\begin{eqnarray} \label{7.13}
\H^d(\Sigma \cap B(x,r)) &=& \H^d(\Sigma_0 \cap B(x,r))
= \int_{\Sigma_0 \cap B(x,r)} \theta_d^{-1} d\mu 
\nonumber\\
&=& \int_{B(x,r)} e^{-f} d\mu  = e^{-m(x,r)} \int_{B(x,r)} e^{[f-m(x,r)]} d\mu
\nonumber\\
&\leq &  C e^{-m(x,r)} \mu(B(x,r)) \leq C r^d.
\end{eqnarray}
 Similarly,
\begin{eqnarray} \label{7.14}
\H^d(\Sigma \cap B(x,r)) &=& \int_{B(x,r)} e^{-f} d\mu
= \mu(B(x,r)) \fint_{B(x,r)} e^{-f} d\mu
\nonumber\\
&\geq& e^{-m(x,r)} \mu(B(x,r)) \geq C^{-1} r^d,
\end{eqnarray}
where this time the main step involves Jensen's inequality.

Hence $\Sigma$ is  $d$-Ahlfors regular and Theorem \ref{t1.7} ensures that it contains 
big pieces of Lipschitz graphs. 
This completes our proof of Theorem \ref{t1.8}.
\qed

\bibliographystyle{amsplain}

\providecommand{\bysame}{\leavevmode\hbox to3em{\hrulefill}\thinspace}
\providecommand{\MR}{\relax\ifhmode\unskip\space\fi MR }
\providecommand{\MRhref}[2]{
\href{http://www.ams.org/mathscinet-getitem?mr=#1}{#2}
}
\providecommand{\href}[2]{#2}

\noindent {Jonas Azzam: Universitat Aut\`{o}noma de Barcelona,
Departament de Matem\`{a}tiques,
08193 Bellaterra (Barcelona).
Email: JonasAziz.Azzam@uab.cat}\\

\noindent {Guy David: Universit\'e Paris-Sud, Laboratoire de Math\'{e}matiques, 
UMR 8658 Orsay, F-91405
CNRS, Orsay, F-91405. Email: guy.david@math.u-psud.fr}\\

\noindent {Tatiana Toro: University of Washington, 
Department of Mathematics,
Seattle, WA 98195-4350. Email: 
toro@math.washington.edu}

\end{document}